\newcommand*{\Scale}[2][4]{\scalebox{#1}{$#2$}}
\newtheorem{theorem}{Theorem}[section]
\newtheorem{definition}[theorem]{Definition}
\newtheorem{remark}[theorem]{Remark}
\newtheorem{lemma}[theorem]{Lemma}
\newtheorem{proposition}[theorem]{Proposition}
\title[Feedback stabilization for the intrinsic GEB]{Boundary feedback stabilization for the intrinsic geometrically exact beam model}
\author{Charlotte Rodriguez}
\thanks{AMS subject classification. 35L50, 93D15.\\
  \textit{Keywords. Geometrically exact beam, intrinsic beam, one-dimensional first-order semilinear hyperbolic systems, exponential stabilization, boundary feedback.}\\
  \textbf{Funding:} This project has received funding from the European Union’s Horizon 2020 research and innovation programme under the Marie Sklodowska-Curie grant agreement No.765579-ConFlex.}
\address {Charlotte Rodriguez \newline \indent
    {Department Mathematik, Lehrstuhl f\"ur Angewandte Mathematik, \newline \indent
    Friedrich-Alexander-Universit\"at Erlangen-N\"urnberg},
    \newline \indent
    {Cauerstr. 11, 91058 Erlangen, Germany}
  }
\email{\texttt{charlotte.rodriguez@fau.de}}
\date{\today}
\author{G\"unter Leugering}
\address {G\"unter Leugering \newline \indent
    {Department Mathematik, Lehrstuhl f\"ur Angewandte Mathematik, \newline \indent
    Friedrich-Alexander-Universit\"at Erlangen-N\"urnberg},
    \newline \indent
    {Cauerstr. 11, 91058 Erlangen, Germany}
  }
\email{\texttt{guenter.leugering@fau.de}}
\begin{document}

\maketitle

\begin{abstract}
In this work we address the problem of boundary feedback stabilization for a geometrically exact shearable beam, allowing for large deflections and rotations and small strains.
The corresponding mathematical model may be written in terms of displacements and rotations (GEB), or intrinsic variables (IGEB). A nonlinear transformation relates both models, allowing to take advantage of the fact that the latter model is a one-dimensional first-order semilinear hyperbolic system, and deduce stability properties for both models.
By applying boundary feedback controls at one end of the beam, while the other end is clamped, we show that the zero steady state of IGEB is locally exponentially stable for the $H^1$ and $H^2$ norms. The proof rests on the construction of a Lyapunov function, where the theory of Coron \& Bastin '16 plays a crucial role. The major difficulty in applying this theory stems from the complicated nature of the nonlinearity and lower order term where no smallness arguments apply. Using the relationship between both models, we deduce the existence of a unique solution to the GEB model, and properties of this solution as time goes to $+\infty$.
\end{abstract}

\tableofcontents

\addtocontents{toc}{\protect\setcounter{tocdepth}{1}}

\section{Introduction and main results}

Beam models describing the three-dimen\-sional motion of thin elastic bodies undergoing large deflections and rotations  have found many applications in civil, mechanical and aerospace engineering. Depending on the assumptions made on the beam (material law, motion magnitude, shearing)
there are various PDE models for flexible beams, e.g. the  Euler-Bernoulli, Rayleigh and Timoshenko beam equations accounting for small displacements and strains. However, when deflections and rotations are not small compared to the overall dimensions of the body, and this is the case for modern highly flexible light weight structures, a geometrically nonlinear model is needed. Examples include robotic arms \cite{grazioso2018robot} as well as flexible aircraft wings \cite{palacios10aircraft} or wind turbine blades \cite{wang2014windturbine} designed to be lighter and slender to improve aerodynamic efficiency. 

The \emph{geometrically exact beam} (GEB) model (see System \eqref{eq:GEB}) and the \emph{intrinsic geometrically exact beam} (IGEB) model (see System \eqref{eq:intrinsic_original}) discussed in this article are such geometrically nonlinear models
describing the motion of a beam in $\mathbb{R}^3$. They take into account shearing without warping: cross sections remain plane, do not change of shape, but may rotate independently from the motion of the centerline. The beam may undergo large displacements of its centerline and large rotations of its cross sections. Both systems are one-dimensional. The former, a second-order nonlinear system of six equations, originates from the works of Reissner \cite{reissner1981finite} and Simo \cite{simo1985finite}.
The latter, a first-order semilinear hyperbolic system of twelve equations, originates from the work of Hodges \cite{hodges1990, hodges2003geometrically}, and is of its own interest in aeroelastic modelling and engineering, see \cite{artola2019, palacios10aircraft}
and references therein. 

The adjective \textit{intrinsic} indicates that the equations make no reference to displacements or rotations variables, and instead involve only so-called \emph{intrinsic variables} (velocities and strains). Indeed, the IGEB model has velocities and strains expressed in a \emph{body attached coordinate system} as unknown states\footnote{The unknown states of the IGEB model may also be taken as velocities, and forces and moments (instead of strains), using a change of variable; see \cref{rem:mass_flex}.}, while the unknown states of the GEB model are displacements and rotations expressed in a \emph{fixed coordinate system}.
In fact, a nonlinear transformation $\mathcal{N}$, defined in \eqref{eq:transfo}, allows to express the unknowns of the IGEB model as a function of the unknowns of the GEB model, thus directly deriving the IGEB model from the GEB model (see \cite[Sec. 2.3.2]{weiss99}). This transformation, going from a nonlinear system to a semilinear and hyperbolic system, allows us to take advantage of the simpler structure of the IGEB model to study the geometrically exact beam, while using this relationship between both models to also deduce results on the GEB model.

Whereas the notions of well-posedness and stabilization of classical beam models such as Euler-Bernoulli and Timoshenko models have been extensively studied in mathematical literature (see for instance
\cite{alabau2007, tucsnak2000, guo2004, hegarty12,kimrenardy87, morgul91, krstic2009, xu2005}),
the GEB and IGEB models have, to the best knowledge of the authors, not been addressed in the literature. The main result of our work is, thus, a novel contribution in this direction.
We investigate the local exponential stabilization problem for the IGEB model, in the case a vibrating beam with one clamped end and the other being under feedback control. 
Using the transformation $\mathcal{N}$, we also deduce the existence of a unique solution, global in time, to the GEB model, and properties of this solution as time goes to infinity.

Among the methods commonly used to study stability, we opt for using a so-called \emph{quadratic Lyapunov functional}.
Our approach relies on the fact that the IGEB model is a one-dimensional first-order hyperbolic system.
For such systems, Bastin \& Coron \cite{Coron_2016_stab, bastin2017exponential} have systematized the search of quadratic Lyapunov functionals, giving sufficient criteria for their existence. More precisely, it is sufficient to find a matrix-valued function $Q$ fulfilling matrix inequalities that involve both the coefficients appearing in the equations and the boundary conditions (and consequently, the feedback control).
In view of this, our task consists in finding an appropriate feedback control and studying the IGEB model (energy of the beam, coefficients), in order to prove that there exists such a function $Q$.

Up to the best of our knowledge, the stabilization result presented here is the first result which makes use of the technique of \cite{Coron_2016_stab, bastin2017exponential} in the context of precise mechanical models for beams, such as the IGEB model. These results have been applied to chemotaxis models or the Saint-Venant equations for instance.
An additional feature in our system is that the nonlinear term cannot be made arbitrarily small.

Here\footnote{The IGEB model may be given for more general beams where the only assumption made is the thinness of the beam and that the material is linear-elastic, see \cite{artola2019, hodges2003geometrically, palacios2017invariant}. 
For simplicity, we work under more restrictive assumptions, but the more general case is of practical interest (for instance, for u-shaped plane wings).} we consider a slender beam made of an isotropic linear-elastic material, with constant geometrical and material parameters (density $\rho>0$, cross section area $a>0$, shear modulus $G>0$, Young modulus $E>0$, area moments of inertia $I_2, I_3>0$, shear correction factors $k_2>0, \ k_3 >0$, and the factor $k_1>0$ that corrects the polar moment of area), and such that sectional principal axes are aligned with the body-attached basis (see \cref{sec:unknowns_coef}).

\subsection{The models}
\label{sec:model}
Consider a beam of length $\ell>0$. We now describe the GEB and IGEB models, and the transformation relating them.

\subsubsection*{The GEB model}
The unknown states of the GEB model are the \emph{position of the centerline} and \emph{rotation matrix} $(\mathbf{p},\mathbf{R}) \colon [0, \ell]\times [0, T] \rightarrow \mathbb{R}^3 \times \mathrm{SO}(3)$. Here, $\mathrm{SO}(3)$ denotes the special orthogonal group, namely, the set of unitary real matrices of size $3$, with determinant equal to $1$, also called \emph{rotation} matrices.
With the geometrical and material restrictions described above (following \cite{strohm_dissert}), the dynamics of these unknowns are given by the system\footnote{\label{foot:cross_prod}
Here, $u \times \zeta$ denotes the cross product between any $u, \zeta \in \mathbb{R}^3$, and we shall also write $\widehat{u} \,\zeta = u \times \zeta$, meaning that $\widehat{u}$ is the skew-symmetric matrix 
\begin{align*}
\widehat{u} = \Scale[1]{ \begin{bmatrix}
0 & -u_3 & u_2 \\
u_3 & 0 & -u_1 \\
-u_2 & u_1 & 0
\end{bmatrix}};
\end{align*}
and for any skew-symmetric $\mathbf{u} \in \mathbb{R}^{3 \times 3}$, the vector $\mathrm{vec}(\mathbf{u}) \in \mathbb{R}^3$ is such that $\mathbf{u} = \widehat{\mathrm{vec}(\mathbf{u})}$.
}
\begin{align} \label{eq:GEB}
\begin{cases}
\rho a \partial_{t}^2\mathbf{p} = \partial_x(\mathbf{R} S_1\Gamma)  & \text{in }(0,\ell) \times (0,T)\\
\rho \partial_t (\mathbf{R} J W) = \partial_x(\mathbf{R} S_2 \Upsilon) + (\partial_x \mathbf{p}) \times (\mathbf{R}S_1\Gamma) & \text{in }(0,\ell) \times (0,T)\\
\mathbf{p}(\ell, \cdot) = h^\mathbf{p}, \quad \mathbf{R}(\ell, \cdot) = h^\mathbf{R}  &\text{for } t \in  (0,T)\\
-\mathbf{R}(0, \cdot) S_1 \Gamma(0, \cdot) = h_1(\cdot), \quad - \mathbf{R}(0, \cdot)S_2 \Upsilon(0, \cdot) = h_2(\cdot) &\text{for } t \in  (0,T)\\
\mathbf{p}(\cdot,0)=\mathbf{p}^0(\cdot), \quad \partial_t \mathbf{p}(\cdot,0)=v^0(\cdot) &\text{for } x \in (0,\ell)\\
\mathbf{R}(\cdot,0) = \mathbf{R}^0(\cdot), \quad \mathbf{R}(\cdot, 0)W(\cdot,0) = w^0(\cdot) &\text{for } x \in (0,\ell),
\end{cases}
\end{align}
where the functions $V, W, \Gamma, \Upsilon \colon [0, \ell] \times [0, T] \rightarrow \mathbb{R}^3$ depend nonlinearly on the unknowns $\mathbf{p}, \mathbf{R}$: they are the \emph{linear velocity}, \emph{angular velocity}, \emph{translational strain} and \emph{rotational strain} (or \emph{curvature}) of the beam respectively, defined by (see \cref{foot:cross_prod})
\begin{align} \label{eq:def_VWGU}
\begin{aligned}
V &= \mathbf{R}^\intercal \partial_t \mathbf{p}, &  W &= \mathrm{vec}(\mathbf{R}^\intercal \partial_t \mathbf{R}),\\
\Gamma  &= \mathbf{R} ^\intercal \partial_x \mathbf{p}  - e_1,&  \Upsilon  &= \mathrm{vec}\left(\mathbf{R} ^\intercal \partial_x \mathbf{R}  - R^\intercal \tfrac{\mathrm{d}}{\mathrm{d}x} R\right),
\end{aligned}
\end{align}
where $e_1 = (1, 0, 0)^\intercal$, and $R \in H^3(0, \ell; \mathrm{SO}(3))$ is a given function describing the beam before deformation.
Details on these functions and the unknown states are provided in \cref{sec:unknowns_coef}.
The beam is clamped at $x=\ell$, as seen by the Dirichlet boundary conditions in which $h^\mathbf{p} \in \mathbb{R}^3$ and $h^\mathbf{R} \in \mathbb{R}^{3 \times 3}$ are constant. 
At the other end, $x=0$, Neumann boundary controls $h_1(t)$, $h_2(t)$ $\in \mathbb{R}^3$ in feedback form are applied:
\begin{align} \label{eq:feedback_geb}
h_1(t) = - \mu_1 \mathbf{R}(0, t) V(0, t), \qquad h_2(t) = - \mu_2 \mathbf{R}(0, t)W(0, t),
\end{align}
where the positive constants $\mu_1, \mu_2>0$ are feedback parameters.
The initial data are $v^0, w^0 \in C^2([0, \ell]; \mathbb{R}^3)$, $\mathbf{p}^0 \in H^3(0, \ell; \mathbb{R}^3)$ and $\mathbf{R}^0 \in H^3(0, \ell; \mathrm{SO}(3))$.
The positive definite constant matrices $J,S_1, S_2\in \mathbb{R}^{3 \times 3}$ are defined in \eqref{eq:def_J_S1_S2}. 

\begin{remark}[External forces and moments] \label{rem:freebeam_GEB}
In this work we consider the case of a freely vibrating beam, meaning that the external forces $\bar{\phi}$ and moments $\bar{\psi}$ have been set to zero.
In the general case, with such forces and moments the governing equations of \eqref{eq:GEB} would read
\begin{align} \label{eq:geb_with_extforces}
\begin{aligned}
\rho a \partial_{t}^2\mathbf{p} &= \partial_x(\mathbf{R} S_1\Gamma) + \bar{\phi}\\
\rho \partial_t (\mathbf{R} J W) &= \partial_x(\mathbf{R} S_2 \Upsilon) + (\partial_x \mathbf{p}) \times (\mathbf{R}S_1\Gamma) + \bar{\psi}.
\end{aligned}
\end{align}
See also \cref{rem:extensions} \ref{rem:freely_vibrating}.
\end{remark}


\subsubsection*{The transformation}
Departing from System \eqref{eq:GEB}, one obtains the IGEB model described below, by applying the nonlinear transformation $\mathcal{N}$
\begin{align} \label{eq:transfo}
\mathcal{N} \colon \quad (\mathbf{p}, \mathbf{R}) \quad  \longmapsto \quad y = \begin{bmatrix}
V \\ W \\ \Gamma \\ \Upsilon
\end{bmatrix}.
\end{align}
The first six governing equations of the new system \eqref{eq:intrinsic_original} are derived from the governing equations of \eqref{eq:GEB}, while the last six originate from the definition of $\Gamma$ and $\Upsilon$, and are sometimes called \textit{compatibility conditions}. The relationship between the initial data of both systems is
\begin{equation} \label{eq:y0}
y^0 = \begin{bmatrix}
(\mathbf{R}^0)^{\intercal} v^0 \\
(\mathbf{R}^0 )^{\intercal} w^0 \\
(\mathbf{R}^0)^{\intercal} \frac{\mathrm{d}}{\mathrm{d}x} \mathbf{p}^0 - e_1 \\
\mathrm{vec}\left( (\mathbf{R}^0)^{\intercal} \frac{\mathrm{d}}{\mathrm{d}x}\mathbf{R}^0 - R^{\intercal}\frac{\mathrm{d}}{\mathrm{d}x} R \right)
\end{bmatrix}.
\end{equation}


\subsubsection*{The IGEB model}
The IGEB model may be obtained from the GEB model by the transformation described above or directly from mecahnics as in \cite{hodges1990}. Its unknown state $y \colon [0, \ell]\times[0, T] \rightarrow \mathbb{R}^d$, where $d=12$, takes the form
\begin{align*}
y = \begin{bmatrix}
v \\ s
\end{bmatrix},
\end{align*}
where $v, s \colon [0, \ell]\times [0, T] \rightarrow \mathbb{R}^6$ are \emph{velocities} and \emph{strains} of the beam, respectively. More precisely, $v$ consists of the linear and angular velocities, while $s$ consists of the translational and rotational strains (see also \cref{sec:unknowns_coef}). The dynamics of these unknowns are given by the system
\begin{align}  \label{eq:intrinsic_original}
\begin{cases}
\partial_t y + A \partial_x y + \bar{B}(x)y = \bar{g}(y)   &\text{in } (0,\ell)\times(0,T)\\
v(\ell, \cdot) = 0 &\text{for } t \in (0,T) \\
- \mathbf{C}^{-1} s(0, \cdot) = - \mu v(0, \cdot) &\text{for } t \in (0,T)\\
y(\cdot,0) = y^0(\cdot)  &\text{for } x \in (0,\ell),
\end{cases}
\end{align}
where $y^0 \in H^1(0, \ell; \mathbb{R}^d)$ is an initial datum. 
The boundary conditions correspond to those given for the GEB model \eqref{eq:GEB}.
The beam is clamped at the boundary $x = \ell$, as seen by the homogeneous Dirichlet boundary condition. At $x=0$, 
a "Neumann" feedback
control of the form $u(t) = -\mu v(0, t)$ is applied, where $\mu \in \mathbb{R}^{6 \times 6}$ is the constant diagonal matrix defined by
\begin{align*} 
\mu = \mathrm{diag}\left( \mu_1, \mu_1, \mu_1, \mu_2, \mu_2, \mu_2 \right) \qquad \text{with }\mu_1, \mu_2 >0.
\end{align*}
and where, $\mu_1, \mu_2>0$ correspond to the feedback parameters introduced in \eqref{eq:feedback_geb}.
The beam is characterized by the so-called \emph{mass} and \emph{flexibility} matrices $\mathbf{M}, \mathbf{C} \in \mathbb{R}^{6 \times 6}$, defined in \eqref{eq:def_M_C}, which are both positive definite diagonal matrices depending on the beam parameters previously introduced. The beam is also characterized by\footnote{We may assume that $\mathbf{E}$ is of higher regularity: $\mathbf{E} \in C^k([0, \ell]; \mathbb{R}^{d\times d})$, for $k>1$.} $\mathbf{E} \in C^1([0, \ell], \mathbb{R}^{6 \times 6})$ which depends on the form of the beam before deformation (i.e. on the initial strains) and is defined in \eqref{eq:def_E_bold}. 

Let us describe the coefficients and some of their properties.
Denote by $\mathds{O}_n$ the square zero matrix of size $n$.
The coefficients $A \in \mathbb{R}^{d \times d}$ and $\bar{B} \in C^1([0,\ell]; \mathbb{R}^{d\times d})$ are defined by
\begin{align*} 
A = \begin{bmatrix}
\mathds{O}_6 & -(\mathbf{M}\mathbf{C})^{-1} \\
-\mathds{I}_6 & \mathds{O}_6
\end{bmatrix}, \quad \bar{B} = 
\begin{bmatrix}
\mathds{O}_{6} & -\mathbf{M}^{-1}\mathbf{E}\mathbf{C}^{-1} \\
\mathbf{E}^\intercal & \mathds{O}_{6} 
\end{bmatrix}. 
\end{align*}
Both $A$ and $\bar{B}$ depend on the material and geometry of the beam, while $\bar{B}$ additionally depends on the initial strains. System \eqref{eq:intrinsic_original} is hyperbolic, semilinear and $y\equiv 0$ is a steady state.
Indeed, we will see that the matrix $A$ is hyperbolic, in the sense that all its eigenvalues are real and one may find $d$ associated independent eigenvectors.
It is important to note that $\bar{B}$ has a specific structure which will be used in the proof of the main results, and is not small (thus the perturbation is not negligible).
The nonlinearity $\bar{g} \in C^\infty(\mathbb{R}^d; \mathbb{R}^d)$, is defined by
\begin{align} \label{eq:def_gbar}
\bar{g}(y) = \bar{\mathcal{G}}(y)y,
\end{align}
where, denoting $y = (\mathbf{y}_1^\intercal, \mathbf{y}_2^\intercal, \mathbf{y}_3^\intercal, \mathbf{y}_4^\intercal)^\intercal$, with $\mathbf{y}_i \in \mathbb{R}^3$ for $1\leq i \leq 4$,
\begin{align*}
\bar{\mathcal{G}}(y)= - \mathrm{diag}(\mathbf{M}^{-1}, \mathds{I}_6)
\begin{bmatrix}
\rho a \widehat{\mathbf{y}}_2 & \mathds{O}_3 & \mathds{O}_3 & \widehat{S_1 \mathbf{y}_3}\\
\mathds{O}_3 & \rho \widehat{\mathbf{y}}_2 J & \widehat{S_1 \mathbf{y}_3} & \widehat{S_2 \mathbf{y}_4} \\
\mathds{O}_3 & \mathds{O}_3 & \widehat{\mathbf{y}}_2 & \widehat{\mathbf{y}}_1\\
\mathds{O}_3 & \mathds{O}_3 & \mathds{O}_3 & \widehat{\mathbf{y}}_2
\end{bmatrix},
\end{align*}
and $J, S_1, S_2 \in \mathbb{R}^{3 \times 3}$, defined in \eqref{eq:def_J_S1_S2}, are positive definite diagonal matrices depending on the beam parameters.
The nonlinearity is quadratic. More precisely, one may easily see from the definition of $\bar{g}$, that its components $\bar{g }_i \in C^\infty(\mathbb{R}^d)$ for $1\leq i \leq d$, can be written in the form $\bar{g}_i(y) = \big \langle y \,, \bar{G}^i y \big \rangle
$, where $\bar{G}^i \in \mathbb{R}^{d \times d}$ is a constant symmetric matrix whose diagonal contain zeros only (implying that $\bar{G}^i$ is indefinite).
Both $\bar{g}$ and its Jacobian matrix are zero when evaluated at the origin.

\subsection{Main results}

We will need to define compatibility conditions for System \eqref{eq:intrinsic_original}. As for the unknown, we write the initial datum $y^0$ as
\begin{align*}
y^0 = \begin{bmatrix}
v^0 \\ s^0
\end{bmatrix}, \qquad \text{with } v^0, s^0 \colon [0, \ell] \rightarrow \mathbb{R}^6.
\end{align*}

\begin{definition}
\label{def:comp_cond}
We say that the initial datum $y^0\in H^1(0, \ell; \mathbb{R}^d)$ fulfills the zero-order compatibility conditions if
\begin{align} \label{eq:compat_0} 
v^0(\ell) = 0 \quad \text{and} \quad \mathbf{C}^{-1} s^0(0) = \mu v^0(0),
\end{align}
We say that $y^0\in H^2(0, \ell; \mathbb{R}^d)$ fulfills the first-order compatibility conditions if it fulfills \eqref{eq:compat_0} and, $y^1 \in H^1(0, \ell; \mathbb{R}^d)$ defined by 
\begin{align*}
y^1 = -  A \frac{\mathrm{d}y^0}{\mathrm{d}x} - \bar{B} y^0 + \bar{g}(y^0) = \begin{bmatrix}
v^1 \\s^1
\end{bmatrix}
\end{align*}
also fulfills \eqref{eq:compat_0}, where $v^0, s^0$ are replaced by $v^1, s^1$ respectively.
\end{definition}

The local existence and uniqueness of $C^0([0, T]; H^1(0, \ell ;\mathbb{R}^d))$ solutions to general one-dimensional semilinear hyperbolic systems, and $C^0([0, T];H^2(0, \ell ;\mathbb{R}^d))$ solutions in the quasilinear case, 
have been addressed in \cite{bastin2017exponential} and \cite[Appendix B]{Coron_2016_stab}, respectively\footnote{The local and semi-global existence and uniqueness of $C^1([0, \ell ]\times[0,T]; \mathbb{R}^d)$ solutions to general one-dimensional quasilinear hyperbolic systems have been addressed in \cite[Lem. 2.3, Th. 2.1]{wang2006exact} (which is an extension of \cite[Lem. 2.3, Th. 2.5]{li2010controllability} to nonautonomous systems), and these results apply to \eqref{eq:intrinsic_original} if $y^0$ fulfills the first-order compatibility conditions. We recall that semi-global existence means that for any $T>0$, if the initial datum is sufficiently small then there exists a unique solution until time $T$ in $C^1([0, \ell ]\times[0,T]; \mathbb{R}^d)$, see the above references for more detail.}.
Both results apply to System \eqref{eq:intrinsic_original}, yielding \cref{prop:well-posedness} below. This relies on the fact that $A$ is hyperbolic, and on writing \eqref{eq:intrinsic_original} in Riemann invariants (also called \emph{characteristic} or \emph{diagonal} form of \eqref{eq:intrinsic_original}) to verify that the system fits in the framework of \cite{Coron_2016_stab, bastin2017exponential}.

\begin{proposition}[Well-posedness] \label{prop:well-posedness}
Let $k \in \{1, 2\}$, and assume that $\bar{B} \in C^k([0, \ell]; \mathbb{R}^{d \times d})$. Then, there exists $\delta_0>0$ such that the following holds. For any $y^0 \in H^k(0, \ell; \mathbb{R}^d)$ satisfying $\|y^0\|_{H^k(0, \ell; \mathbb{R}^d)} \leq \delta_0$ and the $(k-1)$-order compatibility conditions,
there exists a unique solution $y \in C^0([0, T), H^k(0, \ell; \mathbb{R}^d))$ to \eqref{eq:intrinsic_original} with $T\in (0, +\infty]$. Moreover, $T = + \infty$ if
\begin{align*}
\|y(\cdot, t)\|_{H^k(0, \ell; \mathbb{R}^d)} \leq \delta_0, \qquad \text{for all }  t \in [0, T).
\end{align*}
\end{proposition}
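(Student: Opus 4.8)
The plan is to put System \eqref{eq:intrinsic_original} into Riemann (characteristic) form and then invoke the general local well-posedness theory for one-dimensional first-order hyperbolic systems from \cite{bastin2017exponential} (for $k=1$) and \cite[Appendix B]{Coron_2016_stab} (for $k=2$, the semilinear case being a particular instance of the quasilinear one treated there). The only genuine work is to check that System \eqref{eq:intrinsic_original} really falls into that framework: that $A$ is hyperbolic with no vanishing eigenvalue, that the boundary conditions split correctly between the two endpoints, and that the coefficients and nonlinearity have the required regularity.

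First I would analyze the spectrum of $A$. With $A = \begin{bmatrix} \mathds{O}_6 & -(\mathbf{M}\mathbf{C})^{-1} \\ -\mathds{I}_6 & \mathds{O}_6 \end{bmatrix}$, the eigenvalue equation $A(u^\intercal, w^\intercal)^\intercal = \lambda (u^\intercal, w^\intercal)^\intercal$ reduces to $(\mathbf{M}\mathbf{C})^{-1} w = \lambda^2 w$ and $u = -\lambda w$; since $\mathbf{M}, \mathbf{C}$ are positive definite and diagonal, $(\mathbf{M}\mathbf{C})^{-1}$ is positive definite and diagonalizable, so $A$ has exactly $6$ positive and $6$ negative eigenvalues, all real and nonzero, with a full set of eigenvectors. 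As $\mathbf{M}, \mathbf{C}$ are constant, the diagonalizing matrix is constant: there is an invertible $L \in \mathbb{R}^{d \times d}$ with $L A L^{-1} = \Lambda = \operatorname{diag}(\lambda_1, \dots, \lambda_d)$, ordered so that $\lambda_1, \dots, \lambda_6 > 0 > \lambda_7, \dots, \lambda_d$. Setting $r = Ly$, System \eqref{eq:intrinsic_original} becomes $\partial_t r + \Lambda \partial_x r + L \bar B L^{-1} r = L \bar g(L^{-1} r)$, a semilinear diagonal system with a $C^k$ lower-order coefficient and a $C^\infty$ nonlinearity vanishing together with its Jacobian at $r = 0$.

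Next I would rewrite the boundary conditions in the variable $r$. On $(0,\ell)$ a characteristic of speed $\lambda_j > 0$ enters at $x=0$ and exits at $x=\ell$, so one must prescribe $6$ components at each endpoint; this matches the $6$ conditions $v(\ell,\cdot)=0$ at $x=\ell$ and the $6$ conditions $-\mathbf{C}^{-1} s(0,\cdot) = -\mu v(0,\cdot)$ at $x=0$. Because the first six components of $y$ are $v$ and the last six are $s$, one checks — using the explicit form of $L$ and the positivity of $\mathbf{M}, \mathbf{C}, \mu$ — that $v(\ell,\cdot)=0$ is equivalent to expressing the incoming invariants $r_7(\ell,\cdot), \dots, r_d(\ell,\cdot)$ as a linear function of the outgoing ones $r_1(\ell,\cdot), \dots, r_6(\ell,\cdot)$, and that the feedback relation at $x=0$ expresses $r_1(0,\cdot), \dots, r_6(0,\cdot)$ in terms of $r_7(0,\cdot), \dots, r_d(0,\cdot)$; in particular the relevant sub-blocks of the boundary operators composed with $L^{-1}$ are invertible, so no incoming invariant is left unprescribed or over-determined. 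Finally I would note that $y \equiv 0$ is a steady state, that the nonlinearity $\bar g = \bar{\mathcal{G}}(y)y$ from \eqref{eq:def_gbar} has $\bar g(0)=0$ and vanishing Jacobian at $0$, and that the compatibility conditions of \cref{def:comp_cond} are precisely the $(k-1)$-order compatibility conditions required by the cited results, once transported through the constant change of variables $L$.

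With these checks in hand, the existence of a maximal time $T \in (0,+\infty]$ and of a unique solution $y \in C^0([0,T), H^k(0,\ell;\mathbb{R}^d))$ — together with the continuation criterion that $T = +\infty$ as soon as $\|y(\cdot,t)\|_{H^k(0,\ell;\mathbb{R}^d)} \le \delta_0$ on $[0,T)$, where $\delta_0$ is the smallness threshold furnished by the local theory — is a direct application of \cite{bastin2017exponential} for $k=1$ and \cite[Appendix B]{Coron_2016_stab} for $k=2$ (the linear term $\bar B y$ being an admissible lower-order perturbation since $\bar B \in C^k$). The main obstacle I anticipate is purely bookkeeping in this last reduction: writing $L$ explicitly, tracking the sign of each characteristic family so that the count of prescribed components at each endpoint matches the number of incoming characteristics, and verifying invertibility of the resulting boundary maps. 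Once that is done, the statement is a transcription of the general theory, so I would keep the presentation brief and refer to \cite{Coron_2016_stab, bastin2017exponential} for the details of the fixed-point / energy-estimate argument underlying local well-posedness and the continuation criterion.
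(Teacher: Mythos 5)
Your proposal is correct and follows essentially the same route as the paper: the paper likewise justifies \cref{prop:well-posedness} by diagonalizing $A$ (explicitly, via $L=\bigl[\begin{smallmatrix}\mathds{I}_6 & D\\ \mathds{I}_6 & -D\end{smallmatrix}\bigr]$ with $D=(\mathbf{M}\mathbf{C})^{-1/2}$), passing to the Riemann-invariant system \eqref{eq:intrinsic_charact} with boundary maps $r_-(\ell,\cdot)=-r_+(\ell,\cdot)$ and $r_+(0,\cdot)=\kappa\,r_-(0,\cdot)$, and then invoking the local existence and continuation results of \cite{bastin2017exponential} for $k=1$ and \cite[Appendix B]{Coron_2016_stab} for $k=2$. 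The checks you list (six positive and six negative nonzero eigenvalues, invertibility of the boundary sub-blocks, equivalence of the compatibility conditions under $L$) are exactly the verifications the paper carries out.
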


\begin{definition}[Local exponential stability]
\label{def:loc_exp_stab}
Let $k \in \{1, 2\}$. The steady state $y\equiv 0$ of \eqref{eq:intrinsic_original} is said to be locally $H^k$ exponentially stable if there exist $\varepsilon>0$, $\alpha>0$ and $\eta \geq 1$ such that the following holds. Let  $y^0 \in H^k(0, \ell; \mathbb{R}^d)$ fulfill $\|y^0\|_{H^k(0, \ell; \mathbb{R}^d)}\leq \varepsilon$, and the $(k-1)$-order compatibility conditions of \eqref{eq:intrinsic_original}. 
Then, there exists a unique global in time solution $y \in C^0([0, +\infty); H^k(0, \ell; \mathbb{R}^d))$ to \eqref{eq:intrinsic_original}. Moreover,
\begin{align*}
\|y(\cdot, t)\|_{H^k(0, \ell; \mathbb{R}^d)} \leq \eta  e^{- \alpha t } \|y^0\|_{H^k(0, \ell; \mathbb{R}^d)}, \qquad \text{for all } \, t \in [0, +\infty).
\end{align*}
\end{definition}

We may now state our main results.

\begin{theorem} \label{thm:stabilization}
Let $k \in \{1, 2\}$ and assume that $\bar{B} \in C^k([0, \ell]; \mathbb{R}^{d \times d})$. For any feedback parameters $\mu_1, \mu_2>0$, the steady state $y \equiv 0$ of \eqref{eq:intrinsic_original} is locally $H^k$ exponentially stable.
\end{theorem}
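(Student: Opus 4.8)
The plan is to pass to the characteristic (Riemann) form of \eqref{eq:intrinsic_original} and then apply the quadratic‑Lyapunov‑functional criteria of Bastin \& Coron \cite{Coron_2016_stab, bastin2017exponential}: it suffices to exhibit a symmetric positive definite $Q\in C^1([0,\ell];\mathbb{R}^{d\times d})$ for which the ``interior matrix'' built from $Q$, the diagonalized transport part and $\bar B$ is uniformly (in $x$) negative definite, while the boundary quadratic form associated with $Q$ and the feedback/clamping conditions is nonpositive. Granted this at the $L^2$ level, the $H^1$ and $H^2$ statements follow by augmenting the functional with the analogous quantities for $\partial_t y$ (and $\partial_t^2 y$), whose dynamics have the same transport part, the same boundary conditions, and only extra terms of size $O(\|y\|)$ coming from the (constant) derivatives of the quadratic nonlinearity $\bar g$; the resulting a priori exponential estimate is turned into a global‑in‑time decay statement via \cref{prop:well-posedness}. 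The heuristic behind the choice of $Q$ is that the mechanical energy of the beam already furnishes a quadratic form that is conserved up to boundary terms, and that a carefully chosen $x$‑dependent rescaling of it transfers the boundary dissipation into the interior.

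First I would diagonalize $A$. With $\mathbf{N}:=(\mathbf{M}\mathbf{C})^{-1/2}$ (positive definite, diagonal) one has $A=P^{-1}\,\mathrm{diag}(\mathbf{N},-\mathbf{N})\,P$ with $P=\tfrac12\begin{bmatrix}-\mathbf{N}^{-1}&\mathds{I}_6\\ \mathbf{N}^{-1}&\mathds{I}_6\end{bmatrix}$, so that in the Riemann coordinate $r=(r_+^\intercal,r_-^\intercal)^\intercal:=Py$ one has $v=\mathbf{N}(r_--r_+)$, $s=r_++r_-$, with $r_+$ right‑moving and $r_-$ left‑moving. Then I would record two structural facts. \emph{(i)} With the energy weight $Q_0:=\mathrm{diag}(\mathbf{M},\mathbf{C}^{-1})$ one checks that $Q_0A$ is symmetric and --- this is where the prescribed block form of $\bar B$ is essential --- that $Q_0\bar B$ is \emph{skew‑symmetric}, i.e.\ $Q_0\bar B+\bar B^\intercal Q_0=0$; moreover $(P^{-1})^\intercal Q_0 P^{-1}=2\,\mathrm{diag}(\mathbf{C}^{-1},\mathbf{C}^{-1})$, since all matrices here are diagonal and $\mathbf{N}\mathbf{M}\mathbf{N}=\mathbf{C}^{-1}$. \emph{(ii)} In Riemann coordinates the boundary conditions of \eqref{eq:intrinsic_original} read $r_-(\ell,\cdot)=r_+(\ell,\cdot)$ at the clamped end and $r_+(0,\cdot)=\mathcal{K}_0\,r_-(0,\cdot)$ at the controlled end, where $\mathcal{K}_0=(\mathbf{C}^{-1}+\mu\mathbf{N})^{-1}(\mu\mathbf{N}-\mathbf{C}^{-1})$ is diagonal with entries of the form $\tfrac{a_j-b_j}{a_j+b_j}$ with $a_j,b_j>0$; hence $\rho:=\|\mathcal{K}_0\|<1$ \emph{for every} $\mu_1,\mu_2>0$. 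This strict contractivity is exactly what the positivity of the feedback gains buys, and it is lost if $\mu_1$ or $\mu_2$ vanishes.

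I then look for $Q$ of the form $Q(x)=2\,\mathrm{diag}\big(q_+(x)\,\mathbf{C}^{-1},\,q_-(x)\,\mathbf{C}^{-1}\big)$ with scalar $q_\pm\in C^1([0,\ell])$, $q_\pm>0$. Differentiating $\int_0^\ell r^\intercal Qr\,dx$ along the diagonal system, the interior matrix is $\partial_x\big(Q\,\mathrm{diag}(\mathbf{N},-\mathbf{N})\big)-\big(Q\widetilde B+\widetilde B^\intercal Q\big)$ with $\widetilde B=P\bar B P^{-1}$, and fact \emph{(i)} forces $Q\widetilde B+\widetilde B^\intercal Q=(q_+(x)-q_-(x))\,S(x)$, where $S(x)=\begin{bmatrix}\mathds{O}_6&Y(x)\\ Y(x)^\intercal&\mathds{O}_6\end{bmatrix}$ is block‑off‑diagonal (the skew‑symmetry kills the diagonal blocks, and couples the two off‑diagonal ones) with $Y$ a fixed, bounded, $C^1$ matrix depending on $\mathbf{E}$: thus the ``big, non‑negligible'' term $\bar B$ enters the Lyapunov estimate only through the gap $q_+-q_-$. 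I would take $q_+(0)=1$, $q_-(0)=\theta$ with $\rho^2<\theta<1$ (possible precisely because $\rho<1$), and $q_\pm(x)=\tfrac{1+\theta}{2}\pm\tfrac{1-\theta}{2}e^{-\beta x}$, so that $q_+$ decreases, $q_-$ increases, $q_+>q_->0$ throughout, and $q_+-q_-=(1-\theta)e^{-\beta x}$. Then the interior matrix equals $-(1-\theta)e^{-\beta x}\big(\beta\,\mathrm{diag}(\mathbf{C}^{-1}\mathbf{N},\mathbf{C}^{-1}\mathbf{N})+S(x)\big)$, and for $\beta$ large enough --- depending only on $\mathbf{M},\mathbf{C}$ and $\sup_{[0,\ell]}\|\mathbf{E}\|$ --- the positive definite diagonal part dominates $S$, so the interior matrix is uniformly negative definite on $[0,\ell]$. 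For the boundary form $-\big[r^\intercal Q\,\mathrm{diag}(\mathbf{N},-\mathbf{N})\,r\big]_0^\ell$: at $x=\ell$, using $r_-=r_+$, the contribution is $-2\big(q_+(\ell)-q_-(\ell)\big)r_+^\intercal\mathbf{C}^{-1}\mathbf{N}r_+\le 0$; at $x=0$, using $r_+=\mathcal{K}_0 r_-$ with $\mathcal{K}_0$ diagonal, it is $2\,r_-^\intercal\big(q_+(0)\mathcal{K}_0^2-q_-(0)\mathds{I}_6\big)\mathbf{C}^{-1}\mathbf{N}\,r_-\le 2(\rho^2-\theta)\,r_-^\intercal\mathbf{C}^{-1}\mathbf{N}\,r_-\le 0$. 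Hence $\tfrac{d}{dt}\!\int r^\intercal Qr\le -c\!\int|r|^2$, which is Bastin--Coron's hypothesis; the $H^1$ and $H^2$ versions follow by the augmented functionals described above, the quadratic nonlinearity producing only cubic contributions to $\dot V$ that are absorbed for small data (here the smoothness assumption $\bar B\in C^k$, i.e.\ $\mathbf{E}\in C^k$, is what one uses; the weight $Q$ is itself smooth).

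The delicate point --- and the main obstacle --- is exactly the construction of $q_\pm$: interior dissipativity forces the gap to \emph{open} ($q_+$ strictly decreasing, $q_-$ strictly increasing), the clamped‑end condition forces the gap to keep a fixed sign ($q_+\ge q_-$ at $x=\ell$, hence $q_+>q_-$ throughout), and the non‑smallness of $\bar B$ forces the gap to be \emph{small} --- three requirements that look mutually incompatible and are reconciled only by combining (a) the room $\theta<1$ opened by $\rho<1$, i.e.\ by $\mu_1,\mu_2>0$; (b) the skew structure of $\bar B$, which confines its contribution to the block‑off‑diagonal piece $(q_+-q_-)S$ and kills the (dangerous) diagonal blocks; and (c) letting the gap decay exponentially in $x$, so that the bad piece is at the same scale $e^{-\beta x}$ as the good diagonal piece but smaller by the factor $\beta^{-1}$. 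I expect this interplay, together with the bookkeeping required to carry the nonlinear and lower‑order terms through the $H^2$ estimate (well‑definedness of $\partial_t^2 y\in L^2$ via the equation, compatibility conditions, etc.), to be where the work lies; the remaining ingredients --- the diagonalization, and the passage from the a priori estimate to a global solution via \cref{prop:well-posedness} --- are either standard or a direct, if lengthy, verification.
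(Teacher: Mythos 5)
Your proposal is correct and follows essentially the same route as the paper: diagonalization to Riemann invariants, verification of the Bastin--Coron matrix criteria, the energy-based ansatz $Q=\mathrm{diag}(q_+\cdot,q_-\cdot)\,Q_0$ whose key payoff is that the skew-symmetry of $Q_0\bar B$ confines the non-small lower-order term to a block-off-diagonal matrix multiplied by the gap $q_+-q_-$, and weights whose derivative dominates that gap while respecting the boundary constraints fixed by $\rho(\mathcal{K}_0)<1$. Your explicit exponential weights $q_\pm=\tfrac{1+\theta}{2}\pm\tfrac{1-\theta}{2}e^{-\beta x}$ with $\beta$ large are a minor variant of the paper's construction via \cref{lem:existence_varphi} (where $w_-=\varphi$, $w_+=2\varphi(\ell)-\varphi$), and both satisfy the same set of conditions.
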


\subsubsection*{Idea of the proof}
System \eqref{eq:intrinsic_original} has for unknown state the \emph{physical} variable $y$. To study the stabilization problem, using that the matrix $A$ is hyperbolic, we first write this system in Riemann invariants. For this new system, the unknown state is the \emph{diagonal} variable $r$.
The proof of stability amounts to finding a so-called $H^k$ \emph{quadratic Lyapunov functional}, namely a functional of the form 
\begin{align} \label{eq:form_Lyap}
\mathcal{L}(t) = \sum_{j=0}^k \int_0^\ell \left\langle \partial_t^j r(x,t) \,, Q(x) \partial_t^j r(x,t) \right\rangle dx
\end{align}
for all $t \in [0, T)$, which, when $r$ is in some ball of $C^{k-1}([0,\ell ]\times[0,T];\mathbb{R}^d)$, is equivalent to the squared $H^k(0, \ell; \mathbb{R}^d)$ norm of $r(\cdot, t)$ and has an exponential decay with respect to time. In \eqref{eq:form_Lyap}, $r \in C^0([0, T); H^k(0, \ell; \mathbb{R}^d))$ is the solution to \eqref{eq:intrinsic_original} in diagonal form. General criteria on $Q \in C^1([0, \ell]; \mathbb{R}^{d\times d})$ for the existence of such Lyapunov functionals for one-dimensional first-order semilinear and quasilinear hyperbolic systems are given in \cite[Th. 10.2]{bastin2017exponential} and \cite[Th. 6.10]{Coron_2016_stab}.
They take the form of matrix inequalities involving the boundary conditions (hence, the feedback control) of \eqref{eq:intrinsic_charact} as well as the coefficients appearing in the equations.
Consequently, it will be sufficient to look for $Q=Q(x)$ fulfilling these criteria. Our choice of $Q$ is strongly linked to the expression of the energy of the beam (the sum of the kinetic and elastic energy) which, as we will see, may be written in the form
\begin{align*}
\mathcal{E}^\mathcal{D}(t) = \int_0^\ell \left\langle r(x,t) \,, Q^\mathcal{D} r(x, t) \right\rangle dx,
\end{align*}
for some constant matrix $Q^\mathcal{D} \in \mathbb{R}^{d \times d}$. Indeed, we will use $Q(x) = W(x) Q^\mathcal{D}$ in the proof, with a specific choice of weight matrix $W(x) \in \mathbb{R}^{d \times d}$.

\begin{remark} A few remarks are in order.
\label{rem:extensions}
\begin{enumerate}[label=\arabic*)]
\item \label{rem:reg_sol} If $y \in C^0([0, +\infty);H^k(0, \ell; \mathbb{R}^d))$ is solution to \eqref{eq:intrinsic_original} for some $k \in \{1, 2\}$, then $y$ belongs to $C^{0}([0, \ell]\times[0,+\infty); \mathbb{R}^d)$ in the case $k=1$; while $y \in C^{1}([0, \ell]\times[0,+\infty); \mathbb{R}^d)$ in the case $k=2$, see \cite[Cor. B.2]{Coron_2016_stab} for detail. Moreover, there exists $\bar{\eta}>0$ such that
\begin{align*}
\|y\|_{C^{k-1}([0, \ell] \times [0, +\infty); \mathbb{R}^d)} \leq \bar{\eta} e^{- \alpha t} \|y^0\|_{H^k(0, \ell; \mathbb{R}^d)}.
\end{align*}

\item The nonlinearity $\bar{g} \in C^\infty(\mathbb{R}^d; \mathbb{R}^d)$ is fixed and defined by \eqref{eq:def_gbar}, however we do not make use of its specific value to obtain \cref{prop:well-posedness} and \cref{thm:stabilization} (for $k \in \{1, 2\}$). Both would still hold if the map $\bar{\mathcal{G}}$ in \eqref{eq:def_gbar} is replaced by any other  $\bar{\mathcal{G}} \in C^k(\mathbb{R}^d; \mathbb{R}^{d \times d})$ such that $\bar{\mathcal{G}}(0)=0$.

\item It is interesting to note that if one substitutes the matrix $\mu$ for the precise value $\mathbf{M}^{\frac{1}{2}}\mathbf{C}^{-\frac{1}{2}}$, then the boundary condition at $x=0$ in the IGEB model in Riemann invariants \eqref{eq:intrinsic_charact} takes the form $r_+(0, t)=0$. This amounts to giving a so-called \emph{transparent} boundary condition at $x=0$. 

\item \label{rem:freely_vibrating} As explained in \cref{rem:freebeam_GEB}, the beam is freely vibrating. It would be of interest to study \eqref{eq:intrinsic_original} with forces, moments applied to the beam, as it may be subjected to gravity or aerodynamic forces for instance. Then, the term
\begin{align*}
\mathbf{M}^{-1}\begin{bmatrix}
\bar{\Phi} \\ \bar{\Psi}
\end{bmatrix}
\end{align*}
appears in the first six equations of \eqref{eq:intrinsic_original}, where $\bar{\Phi}, \bar{\Psi}$ are $\mathbb{R}^3$-valued functions representing the \emph{body-attached}\footnote{The relationship between $\bar{\phi}, \bar{\psi}$ and $\bar{\Phi}, \bar{\Psi}$ is $\bar{\phi} = \mathbf{R}\bar{\Phi}$ and $\bar{\psi} = \mathbf{R}\bar{\Psi}$ (see \cref{rem:body_attached_var}).} external forces and moments, respectively.
Consequently, the study of the steady states and the decay of the beam energy (see \cref{sec:choice_fb}), may not be straightforward anymore.

\item According to \cite[Sec. 6.2.2]{Coron_2016_stab}, one may also obtain $H^k$ stabilization for \eqref{eq:intrinsic_original} with $k>2$, if $\bar{B} \in C^k([0, \ell]; \mathbb{R}^{d \times d})$
and if $(k-1)$-order compatibility conditions (extending \cref{def:comp_cond}) are fulfilled. The Lyapunov functional \eqref{eq:form_Lyap} with time derivatives of the solution up to order $k>2$, would then be used.

\item \cref{thm:stabilization} still holds if one gives an homogeneous Neumann condition $s(\ell, t) =0$ at $x=\ell$ (free end) instead of considering clamped end.
For the IGEB model in diagonal form \eqref{eq:intrinsic_charact} this amounts to substituting the present boundary condition for $r_-(\ell, t) = r_+(\ell, t)$, leaving \cref{prop:conservation_energy} and \cref{prop:coron_exp_stab_H1} unchanged.
For the GEB model \eqref{eq:GEB} this amounts to replacing the Dirichlet conditions with $\Gamma(\ell, t)=0$ and $\Upsilon(\ell, t)=0$, and the beam energy $\mathcal{E}^\mathcal{P}$ remains nonincreasing (see \cref{prop:energy_P_nonincreasing}).
\end{enumerate}
\end{remark}

Relying on the above theorem, we obtain the following result on the GEB model.

\begin{theorem} \label{thm:fromIGEBtoGEB}
Let $\mu_1, \mu_2>0$. There exists $\varepsilon>0$, $C_1>0$ and $C_2>0$ such that the following holds. Assume that $v^0, w^0 \in C^2([0, \ell];\mathbb{R}^3)$, $R, \mathbf{R}^0 \in H^3(0, \ell; \mathrm{SO}(3))$, $h^\mathbf{R}\in \mathrm{SO}(3)$, $\mathbf{p}^0 \in H^3(0, \ell; \mathbb{R}^3)$ and $h^\mathbf{p} \in \mathbb{R}^3$, with $h^\mathbf{p} = \mathbf{p}^0(\ell)$ and $h^\mathbf{R} = \mathbf{R}^0(\ell)$. Assume that the function $y^0 \in H^2(0, \ell; \mathbb{R}^d)$ defined by \eqref{eq:y0} fulfills the first-order compatibility conditions of \eqref{eq:intrinsic_original}, as well as $\|y^0\|_{H^2(0, \ell; \mathbb{R}^d)}\leq \varepsilon$. 

Then, there exists a unique solution $(\mathbf{p}, \mathbf{R}) \in C^2( [0, \ell]\times[0, +\infty); \mathbb{R}^3 \times \mathrm{SO}(3))$ to \eqref{eq:GEB} with feedback \eqref{eq:feedback_geb}. Furthermore, for all $(x, t) \in [0, \ell]\times[0, +\infty)$,
\begin{align*}
|\partial_t \mathbf{p}(x,t)| + \|\partial_t \mathbf{R}(x,t)\| + |\Gamma(x,t)| +  |\Upsilon(x,t)| \leq C_1 e^{-C_2 t}.
\end{align*}
\end{theorem}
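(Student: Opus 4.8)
The plan is to leverage \cref{thm:stabilization} with $k=2$ applied to the IGEB system, and then reconstruct $(\mathbf{p},\mathbf{R})$ from the intrinsic variable $y$ via a suitable inverse of the transformation $\mathcal{N}$. First I would check that under the stated hypotheses, the datum $y^0$ defined by \eqref{eq:y0} lies in $H^2(0,\ell;\mathbb{R}^d)$: this follows since $v^0, w^0 \in C^2 \subset H^2$, and $\mathbf{p}^0, \mathbf{R}^0, R \in H^3$, so the strain components $\Gamma^0, \Upsilon^0$ involve one spatial derivative of an $H^3$ function, hence lie in $H^2$; the product and $\mathrm{vec}$ operations preserve this. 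With the smallness assumption $\|y^0\|_{H^2}\le\varepsilon$ (choosing $\varepsilon$ below the threshold of \cref{thm:stabilization}) and the first-order compatibility conditions, \cref{thm:stabilization} yields a unique global solution $y\in C^0([0,+\infty);H^2(0,\ell;\mathbb{R}^d))$ with $\|y(\cdot,t)\|_{H^2}\le \eta e^{-\alpha t}\|y^0\|_{H^2}$, and by \cref{rem:extensions}\ref{rem:reg_sol}, $y\in C^1([0,\ell]\times[0,+\infty);\mathbb{R}^d)$ with the pointwise decay $\|y\|_{C^1}\le\bar\eta e^{-\alpha t}\|y^0\|_{H^2}$.

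Next I would reconstruct $\mathbf{R}$. Writing $y=(V^\intercal,W^\intercal,\Gamma^\intercal,\Upsilon^\intercal)^\intercal$, the rotation matrix should solve the overdetermined first-order system $\partial_x\mathbf{R}=\mathbf{R}(\widehat{\Upsilon}+R^\intercal\frac{\mathrm{d}}{\mathrm{d}x}R)$, $\partial_t\mathbf{R}=\mathbf{R}\widehat{W}$, with $\mathbf{R}(\ell,t)=h^\mathbf{R}$ (note the clamped-end datum is time-independent, consistent with $W(\ell,t)=0$ coming from $v(\ell,\cdot)=0$). The compatibility of this system (equality of mixed partials $\partial_t\partial_x\mathbf{R}=\partial_x\partial_t\mathbf{R}$) is precisely encoded in the last six equations of \eqref{eq:intrinsic_original} (the "compatibility conditions"), so by a Frobenius / ODE-along-characteristics argument one obtains a unique $\mathbf{R}\in C^1([0,\ell]\times[0,+\infty);\mathrm{SO}(3))$; the fact that $\mathbf{R}(x,t)$ stays in $\mathrm{SO}(3)$ follows because the generators $\widehat\Upsilon+R^\intercal\frac{\mathrm d}{\mathrm dx}R$ and $\widehat W$ are skew-symmetric, so $\mathbf{R}^\intercal\mathbf{R}$ satisfies a linear ODE with the constant solution $\mathds{I}$. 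Then $\mathbf{p}$ is recovered by integrating $\partial_x\mathbf{p}=\mathbf{R}(\Gamma+e_1)$, $\partial_t\mathbf{p}=\mathbf{R}V$ from $\mathbf{p}(\ell,t)=h^\mathbf{p}$, again consistent because $\partial_t\partial_x\mathbf{p}=\partial_x\partial_t\mathbf{p}$ is one of the first-order equations of \eqref{eq:intrinsic_original}; this gives $\mathbf{p}\in C^2$ once one checks the gained regularity (the right-hand sides are $C^1$ in $(x,t)$, and a further spatial derivative is controlled using that $y\in H^2$ in $x$). One then verifies that $(\mathbf{p},\mathbf{R})$ so constructed satisfies all equations and boundary/initial conditions of \eqref{eq:GEB} with the feedback \eqref{eq:feedback_geb}, and uniqueness follows because any GEB solution maps under $\mathcal N$ to an IGEB solution, which is unique.

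For the final estimate, $|\Gamma(x,t)|+|\Upsilon(x,t)|$ is bounded directly by $\|y\|_{C^0}\le\bar\eta e^{-\alpha t}\|y^0\|_{H^2}$. For $\partial_t\mathbf{R}=\mathbf{R}\widehat W$ one has $\|\partial_t\mathbf{R}(x,t)\|\le c|W(x,t)|\le c\bar\eta e^{-\alpha t}\|y^0\|_{H^2}$ since $\mathbf{R}$ is orthogonal; similarly $|\partial_t\mathbf{p}|=|\mathbf{R}V|=|V|\le\bar\eta e^{-\alpha t}\|y^0\|_{H^2}$. Setting $C_2=\alpha$ and $C_1$ a constant times $\|y^0\|_{H^2}$ (absorbing the bound $\varepsilon$) gives the claim. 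I expect the main obstacle to be the rigorous reconstruction step: establishing that the Frobenius-type integrability condition holds globally in time, that the resulting $\mathbf R$ genuinely lands in $\mathrm{SO}(3)$ for all $(x,t)$, and that $(\mathbf p,\mathbf R)$ has the claimed $C^2$ regularity in both variables — the last point requiring care since $y$ is only $H^2$ (not $C^2$) in $x$, so one must argue the gained regularity of $\mathbf p$ through the structure of the transport equations rather than by naive differentiation, and confirm that the inverse transformation is well-defined on the small ball where $\mathbf R$ stays close to $R$ (keeping, e.g., $\Gamma+e_1$ from degenerating).
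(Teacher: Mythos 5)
Your proposal is correct and follows the same overall strategy as the paper: solve the IGEB system via \cref{thm:stabilization} with $k=2$, use the $C^1$ regularity and pointwise exponential decay from \cref{rem:extensions}, reconstruct $\mathbf{R}$ from the overdetermined linear system $\partial_t\mathbf{R}=\mathbf{R}\widehat{W}$, $\partial_x\mathbf{R}=\mathbf{R}(\widehat{\Upsilon}+\widehat{\Upsilon}_c)$ whose Frobenius integrability condition is exactly the last six (compatibility) equations of \eqref{eq:intrinsic_original}, then recover $\mathbf{p}$ by integration using the remaining compatibility equation, verify that $(\mathbf{p},\mathbf{R})$ solves \eqref{eq:GEB}, and read off the decay from $|\partial_t\mathbf{p}|=|V|$, $\|\partial_t\mathbf{R}\|\le c|W|$ and the orthogonality of $\mathbf{R}$.

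The one genuine difference is how the constraint $\mathbf{R}(x,t)\in\mathrm{SO}(3)$ is enforced. The paper does not work with the matrix system directly: it parametrizes $\mathbf{R}$ by a unit quaternion $\mathbf{q}$ (\cref{lem:quat_rot_ODE}), converts \eqref{eq:R_overdet_in} into the overdetermined linear system \eqref{eq:overdeter_quat_U} for $\mathbf{q}$, solves that by \cref{lem:quat_overter_syst}, and obtains $|\mathbf{q}|\equiv 1$ automatically because $\langle\mathbf{q},\mathcal{U}(f)\mathbf{q}\rangle=0$; the rotation property is then built into the parametrization, at the cost of handling the sign ambiguity $\pm\mathbf{q}$ when proving uniqueness of $\mathbf{R}$. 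You instead solve the $3\times 3$ matrix system and argue that $\mathbf{R}^\intercal\mathbf{R}$ obeys a linear equation with constant solution $\mathds{I}_3$ because the generators are skew-symmetric. Both routes are valid and rest on the same existence lemma for overdetermined linear first-order systems; yours is more elementary and avoids the quaternion formalism, while the paper's keeps the unknown on the unit sphere $S^3$ throughout, which is the standard device for integrating rotation kinematics. If you take your route, do record that $\det\mathbf{R}=1$ is propagated from the datum $h^{\mathbf{R}}\in\mathrm{SO}(3)$ by continuity. Your handling of the $C^2$ regularity and of uniqueness (via injectivity of $\mathcal{N}$ on the relevant class) matches the paper's Steps 3--4.
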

Under the assumptions of \cref{thm:fromIGEBtoGEB}, one knows by \cref{thm:stabilization} that for $\varepsilon>0$ small enough there exists a unique solution $y \in C^0([0, +\infty); H^2(0, \ell; \mathbb{R}^{d}))$ to \eqref{eq:intrinsic_original}. 
The solution $(\mathbf{p}, \mathbf{R})$ given by \cref{thm:fromIGEBtoGEB} is in fact related to $y$ by the transformation $\mathcal{N}$ defined in \eqref{eq:transfo}.

\subsection{Mechanical setting}
\label{sec:unknowns_coef}
We will now present the meaning of the unknown states of the IGEB and GEB models. To this end, we begin by describing the geometry of the underlying beam.
The beam is idealized as a \emph{centerline}, and a family of \emph{cross sections}, the centerline running along the geometric centers of the cross sections. Let $\{e_i\}_{i=1}^3$ $= \{(1, 0, 0)^\intercal$, $(0, 1, 0)^\intercal$, $(0, 0, 1)^\intercal\}$.

Before deformation, the position of the centerline $p \colon [0, \ell] \rightarrow \mathbb{R}^3$ and the orientation of the cross sections, are both known. The latter is given by the columns $\{b^i\}_{i=1}^3$ of a rotation matrix $R \colon [0, \ell] \rightarrow \mathrm{SO}(3)$. We assume that $b^1 = \frac{\mathrm{d}p}{\mathrm{d}x}$, implying that $p$ is parametrized by its arclength. 
At any time $t>0$, the position $\mathbf{p} \colon [0, \ell]\times [0, T] \rightarrow \mathbb{R}^3$ of the centerline and the orientation of the cross sections, given by the columns $\{\mathbf{b}^i\}_{i=1}^3$ of a rotation matrix $\mathbf{R} \colon [0, \ell]\times[0, T] \rightarrow \mathrm{SO}(3)$, are both unknown.
As shear deformation is allowed, $\mathbf{b}^1$ is not necessarily tangent to the centerline.
The sets $\{b^i(x)\}_{i=1}^3$ and $\{\mathbf{b}^i(x,t)\}_{i=1}^3$ are body-attached basis, with origin $p(x)$ and $\mathbf{p}(x,t)$ respectively.

Let the set $\Omega_{s} \subset \mathbb{R}^3$ be the beam when straight, untwisted, and such that the position of its centerline is given by the map $x \mapsto x e_1$ defined on $[0, \ell]$. We may write it as $\Omega_{s} = \bigcup_{x \in [0,\ell]}\mathfrak{a}(x)$ where $\mathfrak{a}(x)$ is the cross section intersecting the centerline at $x e_1$. Then, the beam before deformation and the beam at time $t>0$ take the form $\Omega_c = \{\bar{p}(X) \colon X \in \Omega_s\}$ and $\Omega_t = \{\bar{\mathbf{p}}(X, t) \colon X \in \Omega_s\}$
respectively, where for $X = (x,\xi_2,\xi_3)^\intercal \in \Omega_{s}$
\begin{align*}
\bar{p}(X) = p(x) + R(x)(\xi_2 e_2 + \xi_3 e_3), \quad \bar{\mathbf{p}}(X,t) = \mathbf{p}(x,t) + \mathbf{R}(x,t)(\xi_2 e_2 + \xi_3 e_3). 
\end{align*}
We call $\Omega_s$, $\Omega_c$ and $\Omega_t$  \emph{straight-reference} configuration, \emph{curved-reference} configuration and \emph{current} configuration of the beam, respectively. We refer to \cref{fig:config_beam} for visualization.
The vector $\xi_2 e_2 + \xi_3 e_3$ is the position of $X$ within $\mathfrak{a}(x)$.

\begin{figure}[h]
\centering
\includegraphics[scale=0.875]{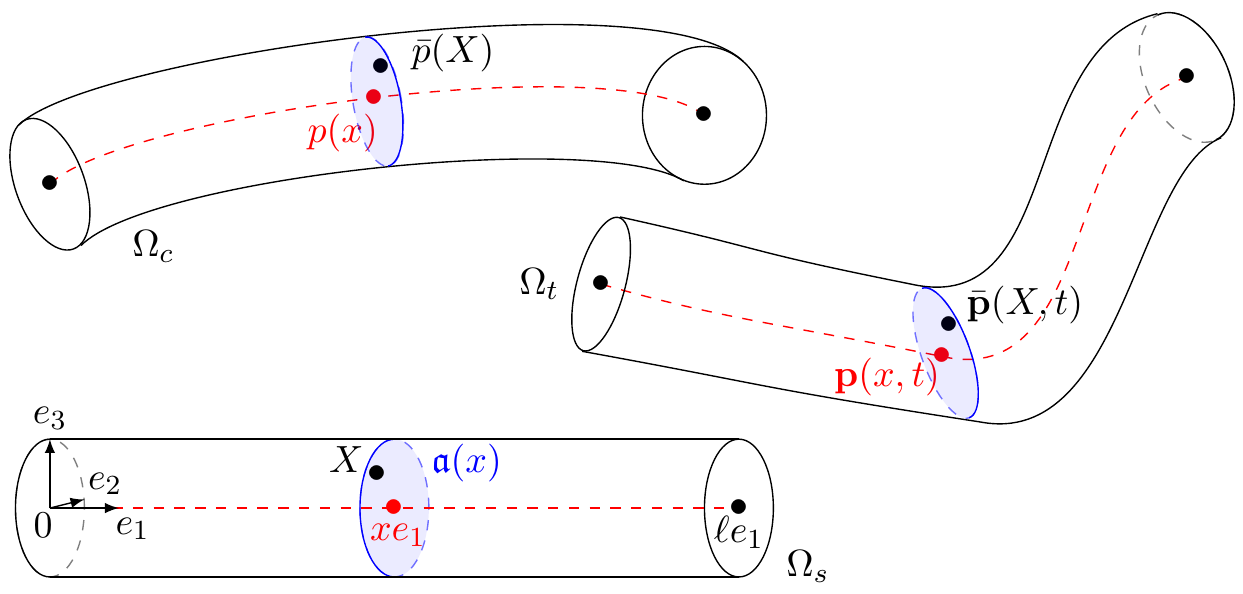}
\caption{The beam in straight-reference $\Omega_s$, curved-reference $\Omega_c$, and current $\Omega_t$ configurations.}
\label{fig:config_beam}
\end{figure}

\begin{remark}[Body-attached variable]
\label{rem:body_attached_var}
The unknown states of the IGEB model are \emph{body-attached variables} in the sense explained below.
We consider two kinds of coordinate systems: one is $\{e_i\}_{i=1}^3$ which is fixed in space and time, the other is the body-attached basis $\{\mathbf{b}^i\}_{i=1}^3$. 
We then make the difference between two kinds of vectors in $\mathbb{R}^3$: global and body-attached.
Consider two vectors $u := \sum_{i=1}^3 u_i e_i$ and $U:=\sum_{i=1}^3 U_i e_i$ of $\mathbb{R}^3$, 
the former being a \emph{global} vector and the latter being the \emph{body-attached} representation of $u$. By this, we mean that the components of $u$ are its coordinates with respect to the global basis $\{e_i\}_{i=1}^3$, while the components of $U$ are coordinates of the vector $u$ with respect to the body-attached basis $\{\mathbf{b}^i\}_{i=1}^3$. In other words $u = \sum_{i=1}^3 U_i \mathbf{b}^i $. Both vectors are then related by the identity $u = \mathbf{R} U$ since $\mathbf{b}^i = \mathbf{R} e_i$, and we may also call $u$ the \emph{global} representation of $U$.
\end{remark}

We have seen that the unknown state $y$ of the IGEB model \eqref{eq:intrinsic_original} consists of the velocities $v$ and strains $s$, and we want now to give the meaning of these variables in terms of positions and rotations. They are
\begin{align*}
v = \begin{bmatrix}
V \\ W
\end{bmatrix}, \qquad s = \begin{bmatrix}
\Gamma \\ \Upsilon
\end{bmatrix}
\end{align*}
where $V, W, \Gamma, \Upsilon \colon [0, \ell ]\times[0, T] \rightarrow \mathbb{R}^3$ are the functions defined in \eqref{eq:def_VWGU}, and are all body-attached variables.
One may note that the translational strain also writes as $\Gamma =\mathbf{R} ^\intercal \partial_x \mathbf{p} -  R  ^\intercal \tfrac{\mathrm{d}}{\mathrm{d}x} p$. 
The \textit{initial strain matrix} $\mathbf{E} \in C^1([0, \ell]; \mathbb{R}^{6 \times 6})$ is defined by
\begin{align} \label{eq:def_E_bold}
\mathbf{E} = \begin{bmatrix}
\widehat{\Upsilon}_c &  \mathds{O}_3\\
\widehat{e}_1 & \widehat{\Upsilon}_c
\end{bmatrix}, \qquad \text{where }\Upsilon_c = \mathrm{vec}\left(R^\intercal \tfrac{\mathrm{d}}{\mathrm{d}x} R \right).
\end{align}
The map $\Upsilon_c \colon [0, \ell] \rightarrow \mathbb{R}^3$ is the rotational strain in the curved-reference configuration (i.e. before deformation).
If the beam is straight and untwisted with centerline $p(x) = xe_1$ before deformation, then $R$ is the identity matrix and $\Upsilon_c = 0$.

Let us introduce the so-called mass matrix $\mathbf{M}$ and flexibility matrix $\mathbf{C}$. In general, for beams made of linear-elastic material, these matrices are positive definite (eventually, $\mathbf{M}$ positive semi-definite), symmetric and dependent on $x$. However, in this work, from our assumptions on the material and geometry of the beam, $\mathbf{M}, \mathbf{C} \in \mathbb{R}^{6 \times 6}$ are both positive definite constant diagonal matrices. They are defined by
\begin{align}\label{eq:def_M_C}
\mathbf{C} = \mathrm{diag}(S_1, S_2)^{-1}, \qquad \mathbf{M} = \rho \, \mathrm{diag}\big( a \mathds{I}_3, \ J \big),
\end{align} 
where $J \in \mathbb{R}^{3\times 3}$, called the \textit{inertia matrix}, and $S_1, S_2 \in \mathbb{R}^{3 \times 3}$, are positive definite diagonal matrices defined by
\begin{align} \label{eq:def_J_S1_S2}
J &= \mathrm{diag}\big((I_2+I_3)k_1, \ I_2, \ I_3 \big)\qquad \text{and} \qquad
\begin{aligned}
S_1 &=  a \, \mathrm{diag}(E, k_2 G, k_3 G) \\
S_2 &= J \, \mathrm{diag}(G, E, E).
\end{aligned}
\end{align}

\begin{remark}[Mass and flexibility matrices]\label{rem:mass_flex}
The flexibility matrix relates the stresses $F$ (i.e. vector of body-attached \emph{internal forces $\Phi$ and moments $\Psi$}) to the strains by $F = \mathbf{C}^{-1}s$, while the flexibility matrix relates the momenta $P$ to the velocities by $P = \mathbf{M}v$.
In this work, the unknown state $y$ consists of velocities and strains. One can choose the internal forces and moments $\Phi, \Psi$ as unknowns instead of strains $\Gamma, \Upsilon$ using the above relationship. The obtained system would have similar properties to \eqref{eq:intrinsic_original}.
\end{remark}

\subsection{Brief state of the art}

Up to the best of our knowledge, global in time existence and uniqueness of $C^0$ or $C^1$ solutions in $[0, \ell ]\times[0, \infty)$ to \eqref{eq:intrinsic_original} is not provided by general results present in the literature, even though one may find such results for quasilinear and semilinear problems similar to \eqref{eq:intrinsic_original}. For instance, in the case of initial value problems, \cite[Ch. 4]{Li_1994_global} assumes dissipativity of the lower order terms ($\bar{B}y$ and $\bar{g}(y)$ here) and \cite{beauchard2011large} gives a relaxation of this assumption, \cite{tartar1981some} considers $C^0(\mathbb{R}; L^1(\mathbb{R}; \mathbb{R}^n))$ solutions when there is not any \textit{linear} lower order term ($\bar{B}y$ here) and the quadratic term satisfies certain constraints (which are satisfied by $\bar{g}$ here); while in the case of initial boundary value problems, \cite[Ch. 5]{Li_1994_global} assumes dissipativity of the boundary conditions and the absence of \textit{linear} lower order terms, \cite{Kmit_classical_nonlin} gives a growth restriction on the lower order terms.

Stabilization of beam equations by means of feedback boundary controls goes back to \cite{quinnrussel78} for the string, \cite{kimrenardy87} for the Timoshenko beam; see also \cite{hegarty12, do18, morgul91, xu2005} and the references therein for other linear and nonlinear beam models. As metionned earlier, we focus of the Lyapunov approach to prove stability. For one-dimensional first-order hyperbolic systems, such as \eqref{eq:intrinsic_original}, several results of stabilization under boundary control are shown by means of quadratic Lyapunov functionals in \cite{Coron_2016_stab} and the references therein. There, when the system does not have any lower order term such as $\bar{B}y$ and $\bar{g}(y)$ here (systems of conservation laws), the exponential stability may rely on the dissipativity of the boundary conditions alone.
However, when lower order terms are present (systems of balance laws) the equations must also be taken into consideration. Some systems of nonlinear balance laws with a uniform steady state may be seen as systems of nonlinear conservation laws perturbed by the lower order terms: if the perturbation is small enough then the $C^1$-\,exponential stability is preserved, see \cite[Th. 6.1]{Coron_2016_stab}.
See also \cite{gugat2018} for two by two quasilinear systems with small lower order terms. 
System \eqref{eq:intrinsic_original} does have dissipative boundary conditions, however the perturbation is not small (see \cref{rem:perturbation}). Concerning general linear, semilinear and quasilinear systems, assumptions on both the boundary conditions and the system's coefficients are required in \cite[Pr. 5.1]{Coron_2016_stab},  \cite[Th. 10.2]{bastin2017exponential}, \cite{hayat2018exponential} and \cite[Th. 6.10]{Coron_2016_stab} for $L^2, H^1$, $C^1$ and $H^2$ exponential stability respectively.

\subsection{Notation}

Let $m, n \in \mathbb{N}$ and $M \in \mathbb{R}^{n\times n}$. Here, the identity and null matrices are denoted by $\mathds{I}_n \in \mathbb{R}^{n \times n}$ and $\mathds{O}_{n, m} \in \mathbb{R}^{n \times m}$, and we use the abbreviation $\mathds{O}_{n} = \mathds{O}_{n, n}$. The transpose, determinant and trace of $M$, and the matrix with components $|M_{i, j}|$ for $i, j \in \{1 \ldots n\}$, are denoted by $M^\intercal$, $\mathrm{det}(M)$, $\mathrm{tr}(M)$ and $|M|$ respectively. We use the notation $\|M\| = \sup_{|\xi| = 1} |M \xi|$, where $|\, . \,|$ is the Euclidean norm. The inner product in $\mathbb{R}^n$ is denoted $\langle \cdot \,, \cdot \rangle$.
The symbol $\mathrm{diag}(\, \cdot \, , \ldots, \, \cdot \, )$ denotes a (block-)diagonal matrix composed of the arguments.
We denote by $\mathcal{D}^+(n)$ the set of positive definite diagonal matrices of size $n$.
We denote by $\mathrm{Jac}_x f$ the Jacobian matrix of any $f = f(x)$ such that $f \in C^1(\mathbb{R}^n; \mathbb{R}^m)$.
Finally, we use the shortened notations $\mathbf{L}^2(0, \ell) = L^2(0, \ell; \mathbb{R}^d)$ and $\mathbf{H}^m(0,\ell)=H^m(0, \ell; \mathbb{R}^d)$.

\subsection{Outline}
In \cref{sec:energy_and_riem}, we explain how the feedback control is chosen (\cref{sec:choice_fb}), derive the diagonal form of System \eqref{eq:intrinsic_original} (\cref{sec:riem_transfo}), and study the energy for the diagonal system in order to gain information of use in the next section (\cref{sec:riem_energy}).
In \cref{sec:proof_main_th} and \cref{sec:fromIGEBtoGEB}, we prove the main results \cref{thm:stabilization} and \cref{thm:fromIGEBtoGEB}, respectively.

\section{Beam energy and Riemann invariants}
\label{sec:energy_and_riem}

\subsection{Choice of the feedback}
\label{sec:choice_fb}

To choose the boundary feedback control, we first look at the GEB model and the corresponding energy $\mathcal{E}^\mathcal{P}$ of the beam described below: we choose the feedback in such a way that the energy of the beam is nonincreasing. As the nonlinear transformation \eqref{eq:transfo} permits to obtain the IGEB model \eqref{eq:intrinsic_original} from the GEB model \eqref{eq:GEB}, we use the corresponding feedback for the IGEB model.
The energy is by definition
\begin{align} \label{eq:def_beam_energy}
\mathcal{E}^\mathcal{P}(t) =  \mathcal{K}(t) + \mathcal{V}(t).
\end{align}
where $\mathcal{K}$ is the \emph{kinetic energy} and $\mathcal{V}$ the \emph{elastic energy} (or \emph{strain energy}). For the type of beam considered here, $\mathcal{K}$ and $\mathcal{V}$ are given by
\begin{align} \label{eq:new_kin_ela_energy}
\begin{aligned}
\mathcal{K}(t) &= \int_0^\ell  \Big[\rho a |V(x, t)|^2 + \rho \big\langle W(x, t) \,, J W(x, t) \big\rangle \Big] dx\\
\mathcal{V}(t) &= \int_0^\ell  \Big[ \big\langle \Gamma(x, t) \,, S_1 \Gamma(x, t) \big \rangle + \big\langle \Upsilon(x, t) \,, S_2 \Upsilon(x, t) \big \rangle \Big] dx
\end{aligned}
\end{align}
where $V, W, \Gamma, \Upsilon$ are defined in \eqref{eq:def_VWGU}.

\begin{proposition} \label{prop:energy_P_nonincreasing}
If $\mathbf{p}\in C^2([0, \ell ]\times [0, T]; \mathbb{R}^3)$ and $\mathbf{R} \in C^2([0, \ell ]\times [0, T]; \mathrm{SO}(3))$ are solution to System \eqref{eq:GEB} with the boundary feedback control \eqref{eq:feedback_geb}, then $t \mapsto \mathcal{E}^\mathcal{P}(t)$ is nonincreasing on $[0, T]$.
\end{proposition}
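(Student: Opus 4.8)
The plan is to differentiate $\mathcal{E}^\mathcal{P}(t)$ in time, integrate by parts in $x$ to produce boundary terms at $x=0$ and $x=\ell$, and show that the interior terms cancel identically while the boundary contributions are non-positive once the feedback \eqref{eq:feedback_geb} is inserted. First I would compute $\tfrac{d}{dt}\mathcal{K}$ and $\tfrac{d}{dt}\mathcal{V}$ using the definitions \eqref{eq:def_VWGU} of $V,W,\Gamma,\Upsilon$. For the kinetic part, note that $\rho a\,|V|^2 = \rho a\,\langle \partial_t\mathbf{p},\partial_t\mathbf{p}\rangle$ (since $\mathbf{R}$ is orthogonal) and $\rho\langle W, JW\rangle = \rho\langle \mathbf{R}JW, \mathbf{R}JW\rangle$-type expressions; using the first two governing equations of \eqref{eq:GEB}, $\tfrac12\tfrac{d}{dt}$ of these brings in $\langle\partial_t\mathbf{p},\partial_x(\mathbf{R}S_1\Gamma)\rangle$ and $\langle W,\partial_x(\mathbf{R}S_2\Upsilon)\rangle + \langle W, (\partial_x\mathbf{p})\times(\mathbf{R}S_1\Gamma)\rangle$, plus care is needed with the $\partial_t(\mathbf{R}JW)$ term since $J$ is constant but $\mathbf{R}JW$ is not simply $\mathbf{R}$ times a constant. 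For the elastic part, $\tfrac12\tfrac{d}{dt}\langle\Gamma,S_1\Gamma\rangle = \langle S_1\Gamma,\partial_t\Gamma\rangle$ and similarly for $\Upsilon$, and one must compute $\partial_t\Gamma$, $\partial_t\Upsilon$ from \eqref{eq:def_VWGU}; these produce $\partial_x$ of velocity-type quantities together with cross-product terms coming from $\partial_t\mathbf{R}^\intercal = -\widehat{W}\mathbf{R}^\intercal$.

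The key algebraic identity I expect to need is the commutation relation between $\partial_t$ and $\partial_x$ applied to $\mathbf{R}$, namely that $\partial_t(\mathbf{R}^\intercal\partial_x\mathbf{p})$ and $\partial_x(\mathbf{R}^\intercal\partial_t\mathbf{p})$ differ by explicit skew-symmetric (cross-product) terms involving $W$ and $\Gamma$; likewise the compatibility relation $\partial_t\Upsilon = \partial_x W + \Upsilon\times W$ (or its curved-reference-adjusted version), which is exactly the content of the last six equations of the IGEB system \eqref{eq:intrinsic_original}. Using these, the cross-product terms generated by differentiating the kinetic energy (the $(\partial_x\mathbf{p})\times(\mathbf{R}S_1\Gamma)$ contribution and the terms from $\partial_t\mathbf{R}$) should cancel against the cross-product terms generated by differentiating the elastic energy, by antisymmetry of $\langle a, b\times c\rangle$ under cyclic permutation. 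After this cancellation, what remains is a total $x$-derivative $\partial_x\big(\langle\partial_t\mathbf{p},\mathbf{R}S_1\Gamma\rangle + \langle W,\mathbf{R}S_2\Upsilon\rangle\big)$ (up to the correct placement of $\mathbf{R}$), so $\tfrac{d}{dt}\mathcal{E}^\mathcal{P} = \big[\langle\partial_t\mathbf{p},\mathbf{R}S_1\Gamma\rangle + \langle \mathbf{R}W,\mathbf{R}S_2\Upsilon\rangle\big]_{x=0}^{x=\ell}$, evaluated between the two ends.

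At $x=\ell$ the beam is clamped: $\mathbf{p}(\ell,\cdot)=h^\mathbf{p}$ and $\mathbf{R}(\ell,\cdot)=h^\mathbf{R}$ are constant in $t$, hence $\partial_t\mathbf{p}(\ell,\cdot)=0$ and $\partial_t\mathbf{R}(\ell,\cdot)=0$, so $W(\ell,\cdot)=0$ and the boundary term at $x=\ell$ vanishes. At $x=0$, substituting the feedback \eqref{eq:feedback_geb}, i.e. $-\mathbf{R}(0,\cdot)S_1\Gamma(0,\cdot) = -\mu_1\mathbf{R}(0,\cdot)V(0,\cdot)$ and $-\mathbf{R}(0,\cdot)S_2\Upsilon(0,\cdot) = -\mu_2\mathbf{R}(0,\cdot)W(0,\cdot)$, and using $\partial_t\mathbf{p} = \mathbf{R}V$, the boundary term becomes $-\big(\mu_1|\mathbf{R}(0,\cdot)V(0,\cdot)|^2 + \mu_2|\mathbf{R}(0,\cdot)W(0,\cdot)|^2\big) \le 0$ since $\mu_1,\mu_2>0$. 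Therefore $\tfrac{d}{dt}\mathcal{E}^\mathcal{P}(t) \le 0$ on $[0,T]$, which is the claim. The main obstacle I anticipate is the bookkeeping in the interior cancellation: keeping track of exactly where $\mathbf{R}$ versus $\mathbf{R}^\intercal$ sits, correctly differentiating $\mathbf{R}JW$ (not $JW$), and verifying that every cross-product term produced by $\partial_t\mathbf{R}$ and $\partial_x\mathbf{R}$ really does cancel — this is routine but error-prone, and it is where the specific structure of the GEB equations (in particular the $(\partial_x\mathbf{p})\times(\mathbf{R}S_1\Gamma)$ coupling term) is essential.
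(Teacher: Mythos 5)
Your proposal is correct and follows essentially the same route as the paper: differentiate $\mathcal{E}^\mathcal{P}$, use the governing equations together with the kinematic identities for $\partial_t\Gamma,\partial_t\Upsilon$ (equivalently the definitions \eqref{eq:def_VWGU} and orthogonality of $\mathbf{R}$), integrate by parts so the interior cross-product terms cancel, kill the $x=\ell$ boundary term by the clamping, and obtain $-\mu_1|\partial_t\mathbf{p}(0,t)|^2-\mu_2|\mathbf{R}(0,t)W(0,t)|^2\le 0$ at $x=0$ from the feedback. The paper likewise leaves the ``substantial calculus'' of the interior cancellation unexpanded, so your level of detail matches its proof.
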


\begin{proof}[Proof of \cref{prop:energy_P_nonincreasing}]

We study the derivative of the energy of the beam.
At first, we assume that external forces and moments are applied, as in \eqref{eq:geb_with_extforces}, in order to point out how considering a freely vibrating beam is of help. 
Let us denote $w = \mathbf{R}W$. After some substantial calculus making use, namely, of the definition of $V, W, \Gamma, \Upsilon$, of the invariance of the cross product in $\mathbb{R}^3$ under rotation (i.e. $\widehat{\mathbf{R}\xi} = \mathbf{R} \, \widehat{\xi} \, \mathbf{R}^\intercal$ for any $\xi \in \mathbb{R}^3$), of integration by parts,
and of the governing system \eqref{eq:geb_with_extforces}, we arrive at
\begin{align*}
\frac{\mathrm{d}}{\mathrm{d}t}\mathcal{E}^\mathcal{P}(t)
&= 2\int_0^\ell  \Big( \big\langle \partial_t \mathbf{p}(x,t) \,, \bar{\phi}(x,t) \big \rangle  + \big \langle w(x,t)\,, \bar{\psi}(x,t) \big \rangle \Big) dx \\
&\quad+ 2 \Big\langle \partial_t \mathbf{p}(\ell , t) \,, \mathbf{R}(\ell , t)S_1\Gamma(\ell , t)\Big \rangle + 2 \Big\langle \mathrm{vec}\big[ \mathbf{R}(\ell , t)^\intercal \partial_t \mathbf{R}(\ell , t) \big] \,, S_2 \Upsilon(\ell , t) \Big\rangle\\
&\quad - 2 \Big \langle\partial_t \mathbf{p}(0, t)\,, \mathbf{R}(0, t)S_1\Gamma(0, t) \Big \rangle - 2 \Big \langle w(0, t) \,, \mathbf{R}(0, t) S_2 \Upsilon(0, t) \Big \rangle.
\end{align*}
The boundary terms at $x=\ell$ are equal to zero since $\mathbf{p}(\ell, \cdot)$ and $\mathbf{R}(\ell, \cdot)$ are constant in time, while the integral is equal to zero since $\bar{\phi} \equiv \bar{\psi} \equiv 0$.
The last two terms in the above right-hand side are equal to $ 2 \left \langle\partial_t \mathbf{p}(0, \cdot)\,, h^1(\cdot) \right \rangle$ and $- 2 \left \langle w(0, \cdot) \,, h^2(\cdot) \right \rangle$, respectively. Hence, \eqref{eq:feedback_geb} yields that $\frac{\mathrm{d}}{\mathrm{d}t}\mathcal{E}^\mathcal{P}(t) = - 2 \left(\mu_1 |\partial_t \mathbf{p}(0, t)|^2 + \mu_2 |w(0, t|^2 \right)$ which is less than or equal to zero for all $t \in [0, T]$.
\end{proof}

\subsection{Transformation to Riemann invariants}
\label{sec:riem_transfo}

The following lemma, which follows from straightforward matrix multiplications, yields that $A$ is hyperbolic.

\begin{lemma}
Let $D \in \mathbb{R}^{6 \times 6}$ be the positive definite diagonal matrix defined by
\begin{align} \label{eq:def_D}
D =  (\mathbf{M} \mathbf{C})^{-1/2}.
\end{align}
Then, the matrix $A$ may be diagonalized as $A = L^{-1} \mathbf{D} L$, where the matrices $\mathbf{D}$, $L$, $L^{-1} \in \mathbb{R}^{d \times d}$ are defined by
\begin{align*} 
\mathbf{D} = \mathrm{diag}(-D, D), \qquad L = 
\begin{bmatrix}
\mathds{I}_6 & D \\
\mathds{I}_6 & -D
\end{bmatrix}, \qquad L^{-1} = \frac{1}{2} \begin{bmatrix}
\mathds{I}_6 & \mathds{I}_6\\
D^{-1} & - D^{-1}
\end{bmatrix}.
\end{align*}
\end{lemma}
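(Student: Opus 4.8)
The plan is to verify the claim by direct computation, since both the statement and its proof are purely a matter of matrix algebra. The only thing to check is that $A = L^{-1} \mathbf{D} L$, or equivalently $AL^{-1} = L^{-1}\mathbf{D}$, together with the assertion that $L L^{-1} = \mathds{I}_d$ so that $L^{-1}$ really is the inverse of $L$. I would organize the computation around the block structure of all matrices involved, each being $2\times 2$ in blocks of size $6$.

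First I would record the key identity linking $D$ to the coefficient matrix $A$: by \eqref{eq:def_D}, $D^2 = (\mathbf{M}\mathbf{C})^{-1}$, and since $\mathbf{M}$ and $\mathbf{C}$ are positive definite diagonal matrices, $D$ is the well-defined positive definite diagonal square root, commuting with $\mathbf{M}$, $\mathbf{C}$ and $(\mathbf{M}\mathbf{C})^{-1}$. Recall that
\begin{align*}
A = \begin{bmatrix} \mathds{O}_6 & -(\mathbf{M}\mathbf{C})^{-1} \\ -\mathds{I}_6 & \mathds{O}_6 \end{bmatrix} = \begin{bmatrix} \mathds{O}_6 & -D^2 \\ -\mathds{I}_6 & \mathds{O}_6 \end{bmatrix}.
\end{align*}
Then I would check $L L^{-1} = \mathds{I}_d$ by block multiplication: the $(1,1)$ block is $\tfrac12(\mathds{I}_6 + D D^{-1}) = \mathds{I}_6$, the $(1,2)$ block is $\tfrac12(\mathds{I}_6 - D D^{-1}) = \mathds{O}_6$, the $(2,1)$ block is $\tfrac12(\mathds{I}_6 - D D^{-1}) = \mathds{O}_6$, and the $(2,2)$ block is $\tfrac12(\mathds{I}_6 + D D^{-1}) = \mathds{I}_6$; here I use that $D$ and $D^{-1}$ commute since both are diagonal. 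This confirms the stated formula for $L^{-1}$ is indeed the inverse of $L$.

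Next I would verify $A = L^{-1}\mathbf{D}L$ by computing $\mathbf{D}L$ first and then left-multiplying by $L^{-1}$. We have
\begin{align*}
\mathbf{D}L = \begin{bmatrix} -D & \mathds{O}_6 \\ \mathds{O}_6 & D \end{bmatrix}\begin{bmatrix} \mathds{I}_6 & D \\ \mathds{I}_6 & -D \end{bmatrix} = \begin{bmatrix} -D & -D^2 \\ D & -D^2 \end{bmatrix},
\end{align*}
and hence
\begin{align*}
L^{-1}\mathbf{D}L = \frac12\begin{bmatrix} \mathds{I}_6 & \mathds{I}_6 \\ D^{-1} & -D^{-1} \end{bmatrix}\begin{bmatrix} -D & -D^2 \\ D & -D^2 \end{bmatrix} = \frac12\begin{bmatrix} \mathds{O}_6 & -2D^2 \\ -2\mathds{I}_6 & \mathds{O}_6 \end{bmatrix} = \begin{bmatrix} \mathds{O}_6 & -D^2 \\ -\mathds{I}_6 & \mathds{O}_6 \end{bmatrix} = A,
\end{align*}
using $D^{-1}D = \mathds{I}_6$ and $D^{-1}D^2 = D$. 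This establishes the diagonalization, and since $\mathbf{D} = \mathrm{diag}(-D, D)$ is a real diagonal matrix with $D \in \mathcal{D}^+(6)$, all $d$ eigenvalues of $A$ are real (the entries of $-D$ and $D$) and $L^{-1}$ provides a full set of $d$ independent eigenvectors, so $A$ is hyperbolic in the sense required.

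There is no real obstacle here: the lemma is a bookkeeping exercise in $2\times 2$ block multiplication, and the single point requiring a word of justification is that $D$, being a positive definite diagonal matrix, has a diagonal square root that commutes with everything in sight — which is immediate from $D^2 = (\mathbf{M}\mathbf{C})^{-1}$ and the fact that products and inverses of diagonal matrices are diagonal. The only mild care needed is to keep the block sizes and the factors of $\tfrac12$ straight when expanding $L^{-1}(\mathbf{D}L)$.
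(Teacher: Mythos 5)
Your computation is correct and is exactly the argument the paper intends: the lemma is stated there with the remark that it ``follows from straightforward matrix multiplications,'' and your block-wise verification of $LL^{-1}=\mathds{I}_d$ and $L^{-1}\mathbf{D}L=A$ (using $D^2=(\mathbf{M}\mathbf{C})^{-1}$) is precisely that computation carried out. Nothing is missing.
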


Note that $D = \rho^{-\frac{1}{2}}\mathrm{diag}\left(E, \ k_2 G, \ k_3 G, \ G, \ E, \ E \right)^\frac{1}{2}$.
We will denote the diagonal entries of $\mathbf{D}$ by $\{\lambda_i\}_{i=1}^d$, as they are the eigenvalues of $A$. These include two repeated values since $\lambda_1 = \lambda_5= \lambda_6$ and $\lambda_i = -\lambda_{i-6}$ for $i > 6$. We have
\begin{align} \label{eq:def_lambdai}
\lambda_7 = \sqrt{\rho^{-1}E}, \quad \lambda_8 = \sqrt{\rho^{-1}k_2 G}, \quad \lambda_9 = \sqrt{\rho^{-1}k_3 G}, \quad \lambda_{10} = \sqrt{\rho^{-1}G}.
\end{align}

Applying the change of variable $r = L y$
in System \eqref{eq:intrinsic_original}, yields 
its diagonal form
\begin{align}
\label{eq:intrinsic_charact}
\begin{cases}
\partial_t r + \mathbf{D} \partial_x r + B(x)r = g(r) &\text{in } (0,\ell)\times(0,T)\\
r_-(\ell,t) = - r_+(\ell,t) &\text{for } t \in (0,T) \\
r_+(0,t) = \kappa \, r_-(0,t) &\text{for } t \in (0,T)\\
r(x,0) = r^0(x) &\text{for } x \in (0,\ell),
\end{cases}
\end{align}
with unknown state $r \colon [0, \ell]\times[0, T] \rightarrow \mathbb{R}^d$, where $r^0 = L y^0$, $B = L\bar{B}L^{-1}$ and $g(r) = L \bar{g}(L^{-1}r)$. In line with the sign of the diagonal entries of $\mathbf{D}$, we denote
\begin{align*}
r = \begin{bmatrix}
r_-\\r_+
\end{bmatrix}, \qquad \text{where }r_-, r_+ \in \mathbb{R}^6,
\end{align*}
for any $r \in \mathbb{R}^d$.
The map $B \in C^1([0,\ell ];\mathbb{R}^{d \times d})$ has the form
\begin{align*}
B = \begin{bmatrix}
 D \mathbf{E}^\intercal - \mathbf{M}^{-1} \mathbf{E} D \mathbf{M}  &  D \mathbf{E}^\intercal + \mathbf{M}^{-1}\mathbf{E} D \mathbf{M} \\
-D \mathbf{E}^\intercal - \mathbf{M}^{-1}\mathbf{E} D \mathbf{M} & -D \mathbf{E}^\intercal + \mathbf{M}^{-1}\mathbf{E} D \mathbf{M}
\end{bmatrix},
\end{align*}
for $\mathbf{E}$, $\mathbf{M}$ defined in \eqref{eq:def_E_bold}-\eqref{eq:def_M_C}. 
Note that $(B+B^\intercal)(x)$ is indefinite for all $x \in [0, \ell]$, since its trace is equal to zero. 
Similarly to $\bar{g}$, the nonlinear map $g \in C^\infty(\mathbb{R}^d; \mathbb{R}^d)$ has the form $g(r) = \mathcal{G}(r)r$ with $\mathcal{G}(r) = L \bar{\mathcal{G}}(L^{-1}r)$,
where $\bar{\mathcal{G}}$ is defined in \eqref{eq:def_gbar}.
Its components $g_i \in C^\infty(\mathbb{R}^d)$, for $1\leq i \leq d$, are quadratic forms with respect to $r \in \mathbb{R}^d$, as $g_i(r) = \big \langle r \,, G^i r \big \rangle$ where the constant symmetric matrix $G^i \in \mathbb{R}^{d \times d}$ is defined by
\begin{align*}
G^i = \begin{cases}
(L^{-1})^\intercal (\bar{G}^i + \lambda_{i+6} \bar{G}^{i+6}) L^{-1} & \text{if }i \leq 6\\
(L^{-1})^\intercal (\bar{G}^{i-6} - \lambda_{i} \bar{G}^{i}) L^{-1} & \text{if }i >6,
\end{cases}
\end{align*}
in terms of the symmetric matrices $\{\bar{G}^i\}_{i=1}^d$ which characterize $\bar{g}$.
Note that $r\equiv 0$ is a steady state of \eqref{eq:intrinsic_charact} and $(\mathrm{Jac}_r \, g)(0) = 0$.
The matrix $\kappa \in \mathbb{R}^{6 \times 6}$ is diagonal and depends on the feedback parameters $\mu_1, \mu_2>0$ introduced in \eqref{eq:feedback_geb}. It is defined by
\begin{align} \label{eq:def_kappa}
\kappa = (\mathbf{M} D + \mu )^{-1}(\mathbf{M} D - \mu ),
\end{align}
The diagonal entries of $\kappa$ belong to $(-1, 1)$ as they have the form $\frac{b-c}{b+c}$ for $b,c>0$.

\begin{remark} \label{rem:perturbation}
System \eqref{eq:intrinsic_original} has dissipative boundary conditions (see \cite[Sec. 4.1]{Coron_2016_stab}), in the sense that $\rho_{\infty}(K) := \inf \big\{\mathcal{R}_\infty(\Lambda K \Lambda^{-1}) \colon \Lambda \in \mathcal{D}^+(d) \big\} <1$, where
\begin{align*}
K = \begin{bmatrix}
\mathds{O}_6 & - \mathds{I}_6 \\
\kappa & \mathds{O}_6
\end{bmatrix}, \qquad \mathcal{R}_\infty(M) = \max_{1 \leq i \leq d}\sum_{j=1}^d|M_{ij}|.
\end{align*}
Indeed, for $\Lambda = \mathrm{diag}\left((1+\varepsilon)|\kappa|, \mathds{I}_6 \right)$, if $\varepsilon>0$ is small enough then $\mathcal{R}_\infty(\Lambda K \Lambda^{-1})<1$. However, the perturbation $(Br + g(r))$ is not small in general, in the sense that its derivative with respect to $r$ evaluated at zero (which is equal to $B$) may not be assumed arbitrarily small. Indeed, for instance, for a straight, untwisted beam with centerline $p(x) = xe_1$ before deformation, one has $\|B\| = \max \{\lambda_8, \ \lambda_9, \ a I_2^{-1} \lambda_9, \ a I_3^{-1} \lambda_8 \}$.
\end{remark}

\subsubsection*{Well-posedness and stabilization}

One may define compatibility conditions for System \eqref{eq:intrinsic_charact} similarly to \cref{def:comp_cond} for System \eqref{eq:intrinsic_original}.
For $k \in \{1, 2\}$, the initial datum $r^0 = L y^0$ fulfills the $(k-1)$-order compatibility conditions of \eqref{eq:intrinsic_original} if and only $r^0$ fulfills the $(k-1)$-order compatibility conditions of \eqref{eq:intrinsic_charact}.
As for \cref{prop:well-posedness}, \cite{Coron_2016_stab, bastin2017exponential} yields a local existence result for \eqref{eq:intrinsic_charact}. 
Furthermore, we can study the stability of the diagonal system \eqref{eq:intrinsic_original} in order to obtain the same result for the system in physical variables \eqref{eq:intrinsic_charact}. 

Indeed, let $k \in \{1, 2\}$. Assume that the steady state $r\equiv 0$ of \eqref{eq:intrinsic_charact} is locally $H^k$ exponentially stable, in the sense of \cref{def:loc_exp_stab} applied to \eqref{eq:intrinsic_charact} instead of \eqref{eq:intrinsic_original}.
In other words, assume that there exist $\varepsilon>0$, $\alpha>0$ and $\eta \geq 1$ such that for any $r^0 \in \mathbf{H}^k(0, \ell)$ fulfilling $\|r^0\|_{\mathbf{H}^k(0, \ell)} \leq \varepsilon$ and the $(k-1)$-order compatibility conditions of \eqref{eq:intrinsic_charact}, there exists a unique solution $r \in C^0([0, +\infty); \mathbf{H}^k(0, \ell))$ to \eqref{eq:intrinsic_charact}, and
\begin{align*}
\|r(\cdot, t)\|_{\mathbf{H}^k(0, \ell)} \leq \eta  e^{- \alpha t } \|r^0\|_{\mathbf{H}^k(0, \ell)}, \qquad \text{for all } \, t \in [0, +\infty).
\end{align*}
Let $\bar{\varepsilon}, \bar{\alpha}, \bar{\eta}>0$ be defined by $\bar{\varepsilon}  = \varepsilon \|L\|^{-1}$, $\bar{\alpha} = \alpha$ and $\bar{\eta} = \eta \|L \| \|L ^{-1}\|$.
Then, the steady state $y \equiv 0$ of the IGEB model \eqref{eq:intrinsic_original} is locally $H^k$ exponentially stable in the sense of \cref{def:loc_exp_stab} with the constants $(\varepsilon, \alpha, \eta)$ replaced with $(\bar{\varepsilon}, \bar{\alpha}, \bar{\eta})$.


\subsection{Energy of the beam}
\label{sec:riem_energy}

Here, we see that the boundary conditions of \eqref{eq:intrinsic_charact} are chosen in such a way that the energy of the beam is nonincreasing. From the definitions of $\mathbf{M},\mathbf{C}, S_1, S_2$ in \eqref{eq:def_M_C}-\eqref{eq:def_J_S1_S2}, one observes that the energy \eqref{eq:def_beam_energy}-\eqref{eq:new_kin_ela_energy} also writes as
\begin{align} 
\label{eq:energy_physical}
\mathcal{E}^\mathcal{P}(t) = \displaystyle \int_0^\ell 
\left\langle \begin{bmatrix}
V \\ W \\ \Gamma \\ \Upsilon
\end{bmatrix} \,, Q^\mathcal{P} \begin{bmatrix}
V \\ W \\ \Gamma \\ \Upsilon
\end{bmatrix} \right\rangle dx, \qquad \text{with }Q^\mathcal{P} = \mathrm{diag} \big( \mathbf{M}, \mathbf{C}^{-1} \big),
\end{align}
where $V, W, \Gamma, \Upsilon$ are defined in \eqref{eq:def_VWGU}.
Since the transformation from GEB to IGEB is $y = (V^\intercal, W^\intercal, \Gamma^\intercal, \Upsilon^\intercal)^\intercal$, and the change of variable $r = Ly$ leads to the diagonal form \eqref{eq:intrinsic_charact} of IGEB, we expect that the map $t \mapsto \mathcal{E}^\mathcal{D}(t)$ defined by
\begin{align} \label{eq:def_L0}
\mathcal{E}^\mathcal{D}(t) = \int_0^\ell  \big \langle r(x,t)\,, Q^\mathcal{D} r(x,t) \big \rangle dx, \qquad \text{with} \ Q^\mathcal{D} = (L^{-1})^\intercal Q^\mathcal{P} L^{-1},
\end{align}
is also nonincreasing if $r$ is solution to \eqref{eq:intrinsic_charact}, as the definitions of $\mathcal{E}^\mathcal{D}$ and $\mathcal{E}^\mathcal{P}$ coincide. As $\mathbf{C}^{-1} = D^2 \mathbf{M}$ (see \eqref{eq:def_D}), we observe that $Q^\mathcal{D}$ rewrites as:
\begin{align} \label{eq:def_QD}
Q^\mathcal{D} &= \frac{1}{4} \begin{bmatrix}
\mathds{I}_6 & D ^{-1}\\
\mathds{I}_6 & -D ^{-1}
\end{bmatrix}
\begin{bmatrix}
\mathbf{M} & \mathds{O}_6\\
\mathds{O}_6 & D^2 \mathbf{M}
\end{bmatrix}
\begin{bmatrix}
\mathds{I}_6 & \mathds{I}_6\\
D ^{-1} & - D ^{-1}
\end{bmatrix}
= \frac{1}{2} \begin{bmatrix}
\mathbf{M} & \mathds{O}_6 \\
\mathds{O}_6 & \mathbf{M}
\end{bmatrix}.
\end{align}

For the sake of clarity and in order to illustrate the structure of the coefficients, we provide a proof below. In particular, the following proposition implies that if $r \in C^1([0, \ell ]\times[0, T]; \mathbb{R}^d)$ is solution to \eqref{eq:intrinsic_charact} then $\|r(\cdot, t)\|_{\mathbf{L}^2(0, \ell )}$ is bounded on $[0, T]$.

\begin{proposition} \label{prop:conservation_energy}
Assume that $r$ is the unique solution to \eqref{eq:intrinsic_charact} in $C^1([0,\ell ]\times[0,T]; \mathbb{R}^d)$. Then, the map $t \mapsto \mathcal{E}^\mathcal{D}(t)$, defined by \eqref{eq:def_L0}, is nonincreasing on $[0,T]$.
\end{proposition}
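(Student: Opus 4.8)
The plan is to differentiate $\mathcal{E}^\mathcal{D}$ along the flow of \eqref{eq:intrinsic_charact}, split the result into one boundary contribution and two interior contributions, show that both interior contributions vanish pointwise, and check that the boundary contribution is nonpositive. Since $r \in C^1([0,\ell]\times[0,T];\mathbb{R}^d)$, I would differentiate under the integral and substitute $\partial_t r = -\mathbf{D}\partial_x r - B r + g(r)$ to obtain
\[ \frac{\mathrm{d}}{\mathrm{d}t}\mathcal{E}^\mathcal{D}(t) = -2\int_0^\ell \big\langle \mathbf{D}\partial_x r, Q^\mathcal{D} r\big\rangle\,dx \;-\; 2\int_0^\ell \big\langle B r, Q^\mathcal{D} r\big\rangle\,dx \;+\; 2\int_0^\ell \big\langle g(r), Q^\mathcal{D} r\big\rangle\,dx. \]
Because $Q^\mathcal{D} = \tfrac12\,\mathrm{diag}(\mathbf{M},\mathbf{M})$ and $\mathbf{D} = \mathrm{diag}(-D,D)$ are diagonal, hence symmetric and commuting, the first integrand equals $\tfrac12\,\partial_x\langle r, Q^\mathcal{D}\mathbf{D} r\rangle$, with $Q^\mathcal{D}\mathbf{D} = \tfrac12\,\mathrm{diag}(-\mathbf{M}D,\mathbf{M}D)$, so the transport term integrates to the boundary term $-\big[\langle r, Q^\mathcal{D}\mathbf{D} r\rangle\big]_{x=0}^{x=\ell}$.

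Next I would dispose of the two interior terms. For the linear lower order term, a short block computation using only that $\mathbf{M}$ and $D$ are diagonal (so that $(\mathbf{M}D\,\mathbf{E}^\intercal)^\intercal = \mathbf{E} D\,\mathbf{M}$) shows $Q^\mathcal{D} B + B^\intercal Q^\mathcal{D} = \mathds{O}_d$; hence $\langle B r, Q^\mathcal{D} r\rangle = \tfrac12\langle r,(Q^\mathcal{D} B + B^\intercal Q^\mathcal{D})r\rangle = 0$ pointwise. This is exactly where the special structure of $\bar B$ enters, and why no smallness of $B$ is needed. For the nonlinear term, the cleanest route is to pass back to physical variables: since $L^\intercal Q^\mathcal{D} L = Q^\mathcal{P} = \mathrm{diag}(\mathbf{M},\mathbf{C}^{-1})$ and $g(r) = L\bar g(L^{-1}r)$, one has $\langle g(r),Q^\mathcal{D} r\rangle = \langle \bar g(y), Q^\mathcal{P} y\rangle$ with $y = L^{-1}r$, and then $Q^\mathcal{P}\bar{\mathcal{G}}(y)$ equals $-\,\mathrm{diag}(\mathds{I}_3,\mathds{I}_3,S_1,S_2)$ times the block matrix appearing in \eqref{eq:def_gbar}. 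Writing $y = (\mathbf{y}_1^\intercal,\dots,\mathbf{y}_4^\intercal)^\intercal$ and expanding $\langle \bar g(y),Q^\mathcal{P} y\rangle$, the diagonal-block contributions vanish because $\widehat{u}\,u = 0$ and $\langle \mathbf{y}_2, \mathbf{y}_2\times J\mathbf{y}_2\rangle = 0$, while the off-diagonal contributions (the blocks $\widehat{S_1\mathbf{y}_3}$, $\widehat{S_2\mathbf{y}_4}$, $\widehat{\mathbf{y}_1}$) cancel in pairs upon repeated use of the scalar triple product identity $\langle a, b\times c\rangle = \langle b, c\times a\rangle$ together with the symmetry of $S_1,S_2$. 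Thus $\langle g(r),Q^\mathcal{D} r\rangle = 0$ pointwise as well. (One could instead argue directly in the $r$-variables from the fact that each $G^i$ has a zero diagonal, but the physical-variable computation is more transparent; it is of course consistent with the identity $\mathcal{E}^\mathcal{D} = \mathcal{E}^\mathcal{P}$.)

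Finally I would evaluate $\frac{\mathrm{d}}{\mathrm{d}t}\mathcal{E}^\mathcal{D}(t) = \langle r(0,t), Q^\mathcal{D}\mathbf{D} r(0,t)\rangle - \langle r(\ell,t), Q^\mathcal{D}\mathbf{D} r(\ell,t)\rangle$ using the boundary conditions. Since $\langle r, Q^\mathcal{D}\mathbf{D} r\rangle = \tfrac12\big(\langle r_+, \mathbf{M}D r_+\rangle - \langle r_-, \mathbf{M}D r_-\rangle\big)$, the condition $r_-(\ell,t) = -r_+(\ell,t)$ makes the term at $x=\ell$ vanish, and at $x=0$ the condition $r_+(0,t)=\kappa\,r_-(0,t)$ with $\kappa$ diagonal and $|\kappa_{ii}|<1$, together with $\mathbf{M}D \in \mathcal{D}^+(6)$, gives $\langle r(0,t),Q^\mathcal{D}\mathbf{D} r(0,t)\rangle = \tfrac12\,\langle r_-(0,t),(\kappa\mathbf{M}D\kappa - \mathbf{M}D)\,r_-(0,t)\rangle \le 0$. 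Hence $\frac{\mathrm{d}}{\mathrm{d}t}\mathcal{E}^\mathcal{D}(t)\le 0$ on $[0,T]$, as claimed. The only genuinely delicate step is the pointwise identity $\langle\bar g(y),Q^\mathcal{P} y\rangle = 0$: it requires carefully tracking all the skew-symmetric blocks $\widehat{\cdot}$ and pairing them correctly through the triple-product identity. Everything else reduces to multiplying diagonal matrices (for the transport and $B$ terms) and inserting the boundary conditions, so I expect the nonlinear cancellation to be the main obstacle.
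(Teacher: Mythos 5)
Your proposal is correct and follows essentially the same route as the paper: differentiate under the integral, integrate the transport term by parts using that $Q^\mathcal{D}$ and $\mathbf{D}$ commute, kill the linear term via the skew-symmetry of $Q^\mathcal{D}B$, kill the nonlinear term by passing to $\langle \bar g(y), Q^\mathcal{P}y\rangle$ and cancelling via cross-product identities, and check the boundary terms exactly as you do. One small bookkeeping remark: the diagonal blocks $\widehat{\mathbf{y}}_2$ in the third and fourth block-rows produce terms like $\langle \mathbf{y}_3, S_1\widehat{\mathbf{y}}_2\mathbf{y}_3\rangle$ that do \emph{not} vanish individually (since $S_1\neq\mathds{I}_3$) but must be paired with the off-diagonal contributions $\langle\mathbf{y}_2,\widehat{S_1\mathbf{y}_3}\mathbf{y}_3\rangle$, etc., before the triple-product identity applies --- the cancellation still goes through exactly as you anticipate, and this is precisely how the paper groups the terms.
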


\begin{proof}
Let $r$ be as in \cref{prop:conservation_energy}.
Using the governing system, integration by parts with the fact that $Q^\mathcal{D}$ and $\mathbf{D}$ commute, one deduces that
\begin{align*}
\frac{\mathrm{d}\mathcal{E}^\mathcal{D}}{\mathrm{d}t} 
&= \frac{1}{2} \Big[ \big \langle r_-(\ell,t)\,, \mathbf{M} D r_-(\ell,t) \big \rangle -  \big \langle r_+(\ell,t)\,, \mathbf{M} D r_+(\ell,t) \big \rangle \\
&\hspace{-0.5cm} - \big \langle r_-(0,t)\,, \mathbf{M} D r_-(0,t) \big \rangle +  \big \langle r_+(0,t)\,, \mathbf{M} D r_+(0,t)\big \rangle \Big]  + 2\int_0^\ell  \big\langle r\,, Q^\mathcal{D} (Br + g(r)) \big \rangle dx.
\end{align*}
Using the boundary conditions, the boundary terms in the above right-hand side reduce to $\frac{1}{2} \left \langle r_-(0,t) \,,  (\kappa^2  - \mathds{I}_6) \mathbf{M} D r_-(0,t) \right \rangle$, which is nonpositive since the diagonal entries of $\kappa$ belong to $(-1, 1)$. It remains to see that the last term of above right-hand side is null. The product $Q^\mathcal{D}B$ is skew-symmetric since it writes as
\begin{align}\label{eq:prod_QD_B}
Q^\mathcal{D} B = 
\begin{bmatrix}
-(\mathcal{B} - \mathcal{B}^\intercal) & -(\mathcal{B} + \mathcal{B}^\intercal)\\
\mathcal{B} + \mathcal{B}^\intercal & \mathcal{B} - \mathcal{B}^\intercal
\end{bmatrix}, \quad \text{for} \quad \mathcal{B} = \tfrac{1}{2}\mathbf{E}D\mathbf{M},
\end{align}
hence $ \langle r \,, Q^\mathcal{D}B r \rangle = 0$ for all $r \in \mathbb{R}^d$. By definition of $Q^\mathcal{D}$ and $g$, $\langle r\,, Q^\mathcal{D} g(r) \rangle = 0$ for all $r \in \mathbb{R}^d$ if and only if $\langle y \,, Q^\mathcal{P} \bar{g}(y)\rangle = 0$ for any $y \in \mathbb{R}^d$. The latter holds directly by definition of $Q^\mathcal{P}$ and $\bar{g}$.
Indeed, denoting $y = (\mathbf{y}_1^\intercal, \mathbf{y}_2^\intercal, \mathbf{y}_3^\intercal, \mathbf{y}_4^\intercal)^\intercal$, with $\mathbf{y}_1, \mathbf{y}_2, \mathbf{y}_3, \mathbf{y}_4 \in \mathbb{R}^3$, one has
\begin{align*}
\big \langle y\,, Q^\mathcal{P} \bar{g}(y)\big \rangle 
=& - \rho a \big \langle  \mathbf{y}_1  \,,  \widehat{\mathbf{y}}_2\mathbf{y}_1  \big \rangle -
\rho \big \langle   \mathbf{y}_2 \,, \widehat{\mathbf{y}}_2 J \mathbf{y}_2 \big \rangle -
\big \langle  \mathbf{y}_1  \,,  \widehat{S_1 \mathbf{y}_3} \mathbf{y}_4  \big \rangle -
\big \langle  \mathbf{y}_2  \,,  \widehat{S_1\mathbf{y}_3}\mathbf{y}_3  \big \rangle \\
& - \big \langle  \mathbf{y}_2  \,,  \widehat{S_2 \mathbf{y}_4} \mathbf{y}_4  \big \rangle -
\big \langle  \mathbf{y}_3  \,,  S_1 \widehat{\mathbf{y}}_2 \mathbf{y}_3  \big \rangle -
\big \langle  \mathbf{y}_3  \,,  S_1 \widehat{\mathbf{y}}_1 \mathbf{y}_4  \big \rangle -
\big \langle \mathbf{y}_4   \,,  S_2\widehat{\mathbf{y}}_2 \mathbf{y}_4  \big \rangle,
\end{align*}
where the first two terms of the above right-hand side are null, while the remaining terms also writes as the sum of $\big \langle \widehat{S_1\mathbf{y}_3} \mathbf{y}_1 + \widehat{S_2 \mathbf{y}_4} \mathbf{y}_2 + \widehat{\mathbf{y}}_1 S_1 \mathbf{y}_3 + \widehat{\mathbf{y}}_2 S_2 \mathbf{y}_4 \,,  \mathbf{y}_4  \big \rangle $ and $\big \langle  \widehat{S_1 \mathbf{y}_3}\mathbf{y}_2 + \widehat{y}_2 S_1 \mathbf{y}_3 \,,  \mathbf{y}_3  \big \rangle$ which are both equal to zero.
%
\end{proof}

\begin{remark}[Structure of $B$] \label{rem:special_form_B}
An interest in going through the proof of \cref{prop:conservation_energy} is a resulting observation on the structure of $B$ that will be used in the proof of \cref{thm:stabilization}.
While $B$ is neither skew-symmetric nor positive or negative semi-definite, one observes that the product $Q^\mathcal{D} B$ not only is skew-symmetric, but also has the specific form \eqref{eq:prod_QD_B}.
\end{remark}


\section{Proof of \cref{thm:stabilization}}
\label{sec:proof_main_th}

We will show that the steady state $r\equiv 0$ of \eqref{eq:intrinsic_charact} is locally $H^1$ and $H^2$ exponentially stable, in order to prove \cref{thm:stabilization}.

\subsection{Strategy and proof}
\label{sec:strategy_proof}

Applying \cref{prop:coron_exp_stab_H1} given below is sufficient to prove the main result \cref{thm:stabilization}, and is equivalent to finding a quadratic Lyapunov functional for System \eqref{eq:intrinsic_charact}.

For any $M \in \mathcal{D}^+(d)$, we denote $M = \mathrm{diag}(M_-, M_+)$, where $M_-, M_+ \in \mathcal{D}^+(6)$. 

\begin{proposition} \label{prop:coron_exp_stab_H1}
Assume that $B \in C^k([0, \ell]; \mathbb{R}^{d \times d})$. Assume that there exists $Q \in C^1([0,\ell ]; \mathcal{D}^+(d))$ such that:
\begin{align} \label{eq:coron_th_bound_cond}
\kappa^2 Q_+(0) - Q_-(0) \quad \text{and} \quad Q_-(\ell ) - Q_+(\ell ) \quad \text{are negative semi-definite};
\end{align}
and, for any $x \in [0, \ell ]$,
\begin{align} 
\label{eq:coron_th_struct_cond}
\tfrac{\mathrm{d}}{\mathrm{d}x}Q(x) \mathbf{D} - Q(x)B(x) - B(x)^\intercal Q(x) \quad \text{is negative definite.}
\end{align}
Then, the steady state $r\equiv0$ of \eqref{eq:intrinsic_charact} is locally $H^1$ and $H^2$ exponentially stable.
\end{proposition}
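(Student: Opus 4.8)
The plan is to invoke the Lyapunov criteria of \cite[Th. 10.2]{bastin2017exponential} (for the $H^1$ case) and \cite[Th. 6.10]{Coron_2016_stab} (for the $H^2$ case): conditions \eqref{eq:coron_th_bound_cond}--\eqref{eq:coron_th_struct_cond} are exactly the matrix inequalities demanded there once specialised to System \eqref{eq:intrinsic_charact}, which is semilinear, has the constant diagonal principal part $\mathbf{D}$ with all entries nonzero, and has boundary conditions of the admissible form. For completeness I would also spell out the underlying Lyapunov functional. For $k \in \{1,2\}$ put
\[
\mathcal{L}(t) = \sum_{j=0}^{k} \int_0^\ell \big\langle \partial_t^j r(x,t) \,, Q(x)\, \partial_t^j r(x,t) \big\rangle \, dx .
\]
Since $Q \in C^1([0,\ell];\mathcal{D}^+(d))$, there are constants $0 < c_1 \le c_2$ with $c_1 \mathds{I}_d \le Q(x) \le c_2 \mathds{I}_d$ on $[0,\ell]$, so $\mathcal{L}(t)$ is equivalent to $\sum_{j=0}^k \|\partial_t^j r(\cdot,t)\|_{\mathbf{L}^2(0,\ell)}^2$; and, using the equation of \eqref{eq:intrinsic_charact} to recover $\partial_x$-derivatives from $\partial_t$-derivatives (here $\mathbf{D}$ is invertible and $g$ is quadratic with $g(0)=0$, $(\mathrm{Jac}_r g)(0)=0$, which forces a smallness restriction on $\|r(\cdot,t)\|_{C^{k-1}([0,\ell])}$), $\mathcal{L}(t)$ is equivalent to $\|r(\cdot,t)\|_{\mathbf{H}^k(0,\ell)}^2$.

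I would then differentiate $\mathcal{L}$ along a solution. For $0 \le j \le k$, the function $r^{(j)} := \partial_t^j r$ solves $\partial_t r^{(j)} + \mathbf{D}\partial_x r^{(j)} + B r^{(j)} = \partial_t^j g(r)$ with the (linear, time-independent) boundary conditions $r^{(j)}_-(\ell,t) = - r^{(j)}_+(\ell,t)$ and $r^{(j)}_+(0,t) = \kappa\, r^{(j)}_-(0,t)$. Substituting this equation and integrating by parts, using that $Q$ and $\mathbf{D}$ are diagonal hence commute, yields
\begin{align*}
\frac{\mathrm{d}}{\mathrm{d}t} \int_0^\ell \big\langle r^{(j)}, Q r^{(j)} \big\rangle \,dx
&= - \Big[ \big\langle r^{(j)}, Q\mathbf{D} r^{(j)} \big\rangle \Big]_0^\ell
+ \int_0^\ell \Big\langle r^{(j)}, \big( \tfrac{\mathrm{d}}{\mathrm{d}x}Q \, \mathbf{D} - QB - B^\intercal Q \big) r^{(j)} \Big\rangle \,dx \\
&\quad + 2 \int_0^\ell \big\langle r^{(j)}, Q\, \partial_t^j g(r) \big\rangle \,dx .
\end{align*}
By \eqref{eq:coron_th_struct_cond} and compactness of $[0,\ell]$ the second integral is $\le -c\, \|r^{(j)}(\cdot,t)\|_{\mathbf{L}^2(0,\ell)}^2$ for some $c>0$; and the block structure $Q = \mathrm{diag}(Q_-, Q_+)$, $\mathbf{D} = \mathrm{diag}(-D, D)$ together with the two boundary relations reduce the boundary term to $\big\langle r^{(j)}_+(\ell,t), (Q_-(\ell) - Q_+(\ell))D\, r^{(j)}_+(\ell,t) \big\rangle + \big\langle r^{(j)}_-(0,t), (\kappa^2 Q_+(0) - Q_-(0))D\, r^{(j)}_-(0,t) \big\rangle$, which is $\le 0$ because $D \in \mathcal{D}^+(6)$ is diagonal, hence commutes with the matrices in \eqref{eq:coron_th_bound_cond} and keeps their product negative semi-definite.

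It then remains to absorb the nonlinear contribution. Since $g = \mathcal{G}(\cdot)(\cdot)$ with $\mathcal{G}(0)=0$ and $g \in C^\infty$, for $j \le k \le 2$ the term $\partial_t^j g(r)$ is a sum of products each carrying a factor that is $O(\|r(\cdot,t)\|_{C^{k-1}([0,\ell])})$ — indeed $\partial_t g(r) = (\mathrm{Jac}_r g)(r)\, \partial_t r$ with $(\mathrm{Jac}_r g)(0) = 0$, and $\partial_t^2 g(r) = (\mathrm{Jac}_r g)(r)\, \partial_t^2 r$ plus a term quadratic in $\partial_t r$, whose $C^0$-norm in $x$ is controlled through the equation by $\|r(\cdot,t)\|_{C^{1}([0,\ell])}$ — so $\big| 2 \int_0^\ell \langle r^{(j)}, Q\, \partial_t^j g(r) \rangle \,dx \big| \le C\, \|r(\cdot,t)\|_{C^{k-1}([0,\ell])} \sum_{i=0}^k \|r^{(i)}(\cdot,t)\|_{\mathbf{L}^2(0,\ell)}^2$. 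By the one-dimensional embedding $\mathbf{H}^k(0,\ell) \hookrightarrow C^{k-1}([0,\ell])$ and the norm equivalence, $\|r(\cdot,t)\|_{C^{k-1}([0,\ell])} \le C\, \mathcal{L}(t)^{1/2}$; hence there is $\delta > 0$ such that, as long as $\mathcal{L}(t) \le \delta$, summing over $j$ gives $\frac{\mathrm{d}}{\mathrm{d}t}\mathcal{L}(t) \le -\tfrac{c}{2} \sum_{j=0}^k \|r^{(j)}(\cdot,t)\|_{\mathbf{L}^2(0,\ell)}^2 \le -\alpha\, \mathcal{L}(t)$ for some $\alpha > 0$. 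A standard continuation argument built on \cref{prop:well-posedness} then closes the proof: choosing $\|r^0\|_{\mathbf{H}^k(0,\ell)}$ small enough that $\mathcal{L}(0) < \delta$, the differential inequality keeps $\mathcal{L}$ below $\delta$ for all times, so the solution never leaves the smallness regime, is global, and $\mathcal{L}(t) \le e^{-\alpha t} \mathcal{L}(0)$, which by the norm equivalence gives exponential decay in $\mathbf{H}^k(0,\ell)$; taking $k = 1$ and $k = 2$ gives the two assertions.

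I expect the genuine difficulty to lie not in any single computation but in reconciling the two smallness mechanisms — the equivalence $\mathcal{L} \simeq \|\cdot\|_{\mathbf{H}^k}^2$ and the absorption of $\partial_t^j g(r)$ both require $r(\cdot,t)$ to remain in a small $C^{k-1}$-ball — and in showing this regime is preserved dynamically; this is precisely the abstract content of \cite[Th. 10.2]{bastin2017exponential} and \cite[Th. 6.10]{Coron_2016_stab}, so in the written proof one may simply cite them, the only remaining point being to check that \eqref{eq:coron_th_bound_cond}--\eqref{eq:coron_th_struct_cond} are exactly their hypotheses for System \eqref{eq:intrinsic_charact} — in particular that the signs of the boundary terms above coincide with the required dissipativity.
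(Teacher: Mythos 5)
Your proposal is correct and follows essentially the same route as the paper: the paper's proof simply cites \cite[Th.~10.2]{bastin2017exponential} for $k=1$ and \cite[Th.~6.10]{Coron_2016_stab} for $k=2$, then checks that, since $(\mathrm{Jac}_r g)(0)=0$, the interior condition reduces to \eqref{eq:coron_th_struct_cond} and the boundary matrix factors as $\mathrm{diag}\left(\kappa^2Q_+(0)-Q_-(0),\,Q_-(\ell)-Q_+(\ell)\right)\mathrm{diag}(D,D)$, exactly the reduction you perform. The additional material you supply (the explicit Lyapunov functional, the integration by parts, and the absorption of $\partial_t^j g(r)$) is a faithful unpacking of the cited theorems rather than a different argument, and your boundary-term computation agrees with the paper's.
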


\begin{remark}
The condition \eqref{eq:coron_th_bound_cond} concerns the feedback control, while \eqref{eq:coron_th_struct_cond} concerns coefficients appearing in the equations.
The conditions for $H^k$ stability are the same for both order $k \in \{1, 2\}$ and we also use the same feedback control.
\end{remark}

\begin{proof}[Proof of \cref{prop:coron_exp_stab_H1}]
This proposition is a special case of the general results \cite[Th. 10.2]{bastin2017exponential} for $k=1$ and \cite[Th. 6.10]{Coron_2016_stab} for $k=2$, given for one-dimensional first-order semilinear and quasilinear hyperbolic systems, respectively. In these results, exponential stability is granted if the matrix $\tfrac{\mathrm{d}}{\mathrm{d}x}Q(x) \mathbf{D} - Q(x)F(x) - F(x)^\intercal Q(x)$,
where $F(x)$ is the derivative with respect to $r$ of the perturbation (i.e. the lower order terms) evaluated at zero, 
and the matrix
\begin{align*} 
- \begin{bmatrix}
Q_-(0) D  & \mathds{O}_6 \\
\mathds{O}_6 & Q_+(\ell ) D 
\end{bmatrix}
 + \begin{bmatrix}
\mathds{O}_6 & - \mathds{I}_6 \\
\kappa & \mathds{O}_6
\end{bmatrix}^\intercal 
\begin{bmatrix}
Q_-(\ell ) D  & \mathds{O}_6 \\
\mathds{O}_6 & Q_+(0) D 
\end{bmatrix}
\begin{bmatrix}
\mathds{O}_6 & - \mathds{I}_6 \\
\kappa & \mathds{O}_6
\end{bmatrix}
\end{align*}
are negative definite and negative semi-definite, respectively.
Since $(\mathrm{Jac}_rg )(0) = 0$, the derivative of $(Br - g (r))$ with respect to $r$, evaluated at $r\equiv 0$, is equal to $B$, yielding the condition \eqref{eq:coron_th_struct_cond}. 
Moreover, the second matrix also writes as the product $\mathrm{diag}\left( \kappa^2Q_+(0) - Q_-(0) \,, \ Q_-(\ell ) - Q_+(\ell ) \right) \mathrm{diag}\left( D\,, D \right)$
which is negative semi-definite if and only if \eqref{eq:coron_th_bound_cond} holds.
\end{proof}

Our objective is to apply \cref{prop:coron_exp_stab_H1} in order to prove \cref{thm:stabilization}.\\


Let us recall the notation we have used so far. The mass matrix $\mathbf{M}$ and well as the inertia matrix $J$ are defined in \eqref{eq:def_J_S1_S2}. Their diagonal entries are denoted $\{\mathbf{M}_i\}_{i=1}^3$ and $\{J_i\}_{i=1}^3$. The initial strain matrix $\mathbf{E} = \mathbf{E}(x)$ and initial curvature $\Upsilon_c = \Upsilon_c(x)$ are defined in \eqref{eq:def_E_bold}, and the components of the latter are denoted $\{\Upsilon_{ci}\}_{i=1}^3$. The matrix $D$ and the eigenvalues $\{\lambda_i\}_{i=1}^d$ of $A$ are defined in \eqref{eq:def_D} and \eqref{eq:def_lambdai}.

We now introduce some matrices, constants and functions that will be involved in the proof of \cref{thm:stabilization}.
The constant $C_\kappa \in (0, 1)$, which characterizes the matrix $\kappa$ defined in \eqref{eq:def_kappa}, is defined by
\begin{align} \label{eq:def_C_kappa}
C_\kappa = \max_{1 \leq i \leq 6}\kappa_i^2,
\end{align}
where $\{\kappa_i\}_{i=1}^6$ are the diagonal entries of $\kappa$. 
The positive definite diagonal matrix $\Lambda \in \mathbb{R}^{d \times d}$, depends on the beam parameters and is defined by 
\begin{align} \label{eq:def_Lam}
\Lambda = \mathrm{diag}(\mathbf{M} D , \mathbf{M} D ).
\end{align}
The map $\Theta \in C^1([0, \ell ]; \mathbb{R}^{d \times d})$, depends not only on the beam parameters but also on the initial strains. It is defined by
\begin{align} \label{eq:def_The}
\Theta = - \begin{bmatrix}
\mathds{O}_6 & \mathbf{E}D\mathbf{M} + (\mathbf{E}D\mathbf{M})^\intercal \\
\mathbf{E}D\mathbf{M} + (\mathbf{E}D\mathbf{M})^\intercal & \mathds{O}_6
\end{bmatrix}.
\end{align}
Observe that $\Theta$ is indefinite, as it is symmetric and its trace equals zero.
Moreover, for any $x \in [0, \ell]$, the largest eigenvalue of $\Theta(x)$ is denoted $\sigma^{\Theta(x)}_d$. The map $x \mapsto \sigma^{\Theta(x)}_d$ is continuous on $[0, \ell ]$ since $x \mapsto \Theta(x)$ belongs to $C^0([0, \ell ];\mathbb{R}^{d \times d})$, see \cite[Coro. VI.1.6]{bhatia_matrix}.

Let us now introduce the continuous functions $q_1, q_2 \in C^0([0, \ell])$ which are the object of the next lemma. They are defined by
\begin{align}\label{eq:def_q1_q2}
q_1(x) = \max_{1 \leq i \leq  6} \theta_i(x) , \qquad q_2(x) = \sigma^{\Theta(x)}_d  \Big(\min_{1 \leq i \leq 6}\mathbf{M}_i \lambda_{i+6}\Big)^{-1},
\end{align}
where $\{\theta_i\}_{i=1}^6 \subset C^0([0, \ell ])$ are the following nonnegative functions:
\begin{align*}
\Scale[0.89]{
\begin{aligned}
\theta_1 &= \left|1 - \lambda_8 \lambda_7^{-1} \right| |\Upsilon_{c3}|+ \left|1 - \lambda_9 \lambda_7^{-1} \right|| \Upsilon_{c2}|, \qquad \ 
\theta_4 = \Big| 1 -  \frac{\lambda_7 J_2}{\lambda_{10} J_1} \Big|| \Upsilon_{c3}| + \Big|1- \frac{\lambda_7 J_3}{\lambda_{10} J_1} \Big| |\Upsilon_{c2}|, \\
\theta_2 &=  \left|1 -  \lambda_7  \lambda_8^{-1} \right|| \Upsilon_{c3}| + \left|1 - \lambda_9 \lambda_8^{-1}\right|| \Upsilon_{c1}| + 1,\ \
\theta_5 = \frac{ a \lambda_9}{\lambda_7 J_2} + \Big|1- \frac{\lambda_{10} J_1}{\lambda_7 J_2}\Big|| \Upsilon_{c3}| + \Big|1 -  \frac{J_3}{J_2} \Big||\Upsilon_{c1}|,\\
\theta_3 &= \left|1 - \lambda_7 \lambda_9^{-1} \right| |\Upsilon_{c2}| +  \left|1 - \lambda_8 \lambda_9^{-1} \right|| \Upsilon_{c1}| + 1, \ \  
\theta_6 =  \frac{a \lambda_8}{\lambda_7 J_3}  + \Big|1 -  \frac{\lambda_{10} J_1}{\lambda_7 J_3} \Big|| \Upsilon_{c2}| + \Big|1- \frac{J_2}{J_3} \Big|| \Upsilon_{c1}|.
\end{aligned}}
\end{align*}

\begin{lemma} \label{lem:neg_Psi}
Let $m \in \{1, 2\}$. Assume that $w_-, w_+ \in C^1([0,\ell ])$ are positive functions such that $w_+(\ell ) \leq w_-(\ell )$ and
\begin{align*}
\frac{\mathrm{d}w_-}{\mathrm{d}x}>0, \quad \frac{\mathrm{d}w_+}{\mathrm{d}x} <0, \quad \min\left\{ \left|\tfrac{\mathrm{d}w_-}{\mathrm{d}x} \right|, \left|\tfrac{\mathrm{d}w_+}{\mathrm{d}x} \right| \right\} > (w_+ -w_-) q_m,  \qquad \text{in } [0,\ell ].
\end{align*}
Then, for all $x \in [0,\ell ]$, the matrix $\mathcal{S} := \mathrm{diag}\big(- \tfrac{\mathrm{d}w_-}{\mathrm{d}x}\mathds{I}_6, \tfrac{\mathrm{d}w_+}{\mathrm{d}x}  \mathds{I}_6\big) \Lambda + (w_+ - w_-)\Theta$
is negative definite. 
\end{lemma}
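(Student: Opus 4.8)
The claim is that a block-diagonal matrix built from the (signed) derivatives of the weight functions times $\Lambda$, plus a ``small'' symmetric perturbation $(w_+-w_-)\Theta$, stays negative definite on $[0,\ell]$. The natural strategy is to split $\mathcal{S}$ into a dominant negative-definite part and a perturbation, and control the perturbation by the hypothesis $\min\{|\tfrac{\mathrm{d}w_-}{\mathrm{d}x}|,|\tfrac{\mathrm{d}w_+}{\mathrm{d}x}|\}>(w_+-w_-)q_m$. First I would fix $x\in[0,\ell]$, write $\delta(x)=\min\{\tfrac{\mathrm{d}w_-}{\mathrm{d}x}(x),-\tfrac{\mathrm{d}w_+}{\mathrm{d}x}(x)\}>0$, and note that $\mathrm{diag}(-\tfrac{\mathrm{d}w_-}{\mathrm{d}x}\mathds{I}_6,\tfrac{\mathrm{d}w_+}{\mathrm{d}x}\mathds{I}_6)\preceq-\delta(x)\mathds{I}_d$ in the sense of quadratic forms. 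Since $\Lambda=\mathrm{diag}(\mathbf{M}D,\mathbf{M}D)$ is positive definite and diagonal, the dominant part satisfies, for every $\zeta=(\zeta_-^\intercal,\zeta_+^\intercal)^\intercal\in\mathbb{R}^d$,
\begin{align*}
\Big\langle \zeta,\ \mathrm{diag}\big(-\tfrac{\mathrm{d}w_-}{\mathrm{d}x}\mathds{I}_6,\tfrac{\mathrm{d}w_+}{\mathrm{d}x}\mathds{I}_6\big)\Lambda\,\zeta\Big\rangle
= -\sum_{i=1}^{6}\Big(\tfrac{\mathrm{d}w_-}{\mathrm{d}x}\,\mathbf{M}_i\lambda_{i+6}\,(\zeta_-)_i^2 - \tfrac{\mathrm{d}w_+}{\mathrm{d}x}\,\mathbf{M}_i\lambda_{i+6}\,(\zeta_+)_i^2\Big),
\end{align*}
using that the diagonal entries of $\mathbf{M}D$ are $\mathbf{M}_i\lambda_{i+6}$ (recall $D=(\mathbf{M}\mathbf{C})^{-1/2}$ so $\mathbf{M}D=\mathbf{M}^{1/2}\mathbf{C}^{-1/2}$ has entries $\mathbf{M}_i\lambda_{i+6}$, $\lambda_{i+6}$ being the $i$-th diagonal entry of $D$). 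Hence this part is bounded above by $-\delta(x)\big(\min_{1\le i\le 6}\mathbf{M}_i\lambda_{i+6}\big)|\zeta|^2$.

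\textbf{Handling the perturbation.} The perturbation term contributes $(w_+-w_-)(x)\langle\zeta,\Theta(x)\zeta\rangle\leq (w_+-w_-)(x)\,\sigma^{\Theta(x)}_d\,|\zeta|^2$, by definition of $\sigma^{\Theta(x)}_d$ as the largest eigenvalue of the symmetric matrix $\Theta(x)$ (note $w_+-w_->0$ on $[0,\ell]$: since $w_+(\ell)\le w_-(\ell)$, $\tfrac{\mathrm{d}w_-}{\mathrm{d}x}>0$ and $\tfrac{\mathrm{d}w_+}{\mathrm{d}x}<0$, integrating backward from $\ell$ gives $w_+(x)-w_-(x)\ge w_+(\ell)-w_-(\ell)+\int_x^\ell(\tfrac{\mathrm{d}w_-}{\mathrm{d}s}-\tfrac{\mathrm{d}w_+}{\mathrm{d}s})\,ds\ge 0$, and it is strictly positive for $x<\ell$ — I should note that at $x=\ell$ the perturbation may vanish, which only makes negativity easier). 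Combining the two bounds,
\begin{align*}
\langle\zeta,\mathcal{S}(x)\zeta\rangle \leq \Big(-\delta(x)\min_{1\le i\le 6}\mathbf{M}_i\lambda_{i+6} + (w_+-w_-)(x)\,\sigma^{\Theta(x)}_d\Big)|\zeta|^2.
\end{align*}
So it suffices that $\delta(x)\min_{1\le i\le 6}\mathbf{M}_i\lambda_{i+6} > (w_+-w_-)(x)\,\sigma^{\Theta(x)}_d$, i.e. $\delta(x)>(w_+-w_-)(x)\,q_2(x)$ with $q_2$ as defined in \eqref{eq:def_q1_q2}. For $m=2$ this is exactly the hypothesis. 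For $m=1$ I would observe that $q_1\ge q_2$ pointwise, which reduces the $m=1$ case to the $m=2$ estimate: indeed $q_1(x)=\max_i\theta_i(x)$ and, comparing the explicit entries of $\Theta$ (which are built from $\mathbf{E}D\mathbf{M}+(\mathbf{E}D\mathbf{M})^\intercal$, hence ultimately from the $\Upsilon_{ci}$ and the ratios $\lambda_j/\lambda_k$, $J_i/J_j$, $a/J_i$) one checks that $\sigma^{\Theta(x)}_d\le\big(\min_i\mathbf{M}_i\lambda_{i+6}\big)\max_i\theta_i(x)$, which is where the $\theta_i$'s come from — they are precisely chosen as (an upper bound for) a Gershgorin- or row-sum-type estimate of the eigenvalues of $\Theta(x)$ rescaled by $\Lambda^{-1}$.

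\textbf{Main obstacle.} The routine part is the quadratic-form manipulation above; the genuine content is verifying $\sigma^{\Theta(x)}_d\le(\min_i\mathbf{M}_i\lambda_{i+6})\,q_1(x)$, i.e. that the $\theta_i$ really dominate the appropriate eigenvalue of $\Theta$. I expect this to require writing out $\Theta(x)$ explicitly using \eqref{eq:def_E_bold} and \eqref{eq:def_J_S1_S2}, noting its block anti-diagonal structure (so its eigenvalues are $\pm$ the singular values of $\mathbf{E}D\mathbf{M}+(\mathbf{E}D\mathbf{M})^\intercal$, or rather $\pm$ the eigenvalues of that symmetric $6\times6$ block), and then bounding the eigenvalues of a rescaled version by a Gershgorin argument whose row sums collapse to the stated $\theta_i$. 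This is where the asymmetry between $q_1$ and $q_2$ lives: $q_2$ uses the true largest eigenvalue $\sigma^{\Theta(x)}_d$ while $q_1$ uses the coarser Gershgorin bound, and one must be careful that the rescaling by $\Lambda^{-1}=\mathrm{diag}((\mathbf{M}D)^{-1},(\mathbf{M}D)^{-1})$ interacts correctly with the entries. Once that inequality is in hand, the negative definiteness of $\mathcal{S}(x)$ for each fixed $x\in[0,\ell]$ follows immediately, and uniformity in $x$ is not needed for the pointwise statement (though it would follow from continuity of all the maps involved and compactness of $[0,\ell]$).
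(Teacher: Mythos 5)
Your treatment of the case $m=2$ is correct and coincides with the paper's argument: the paper invokes Weyl's inequality for eigenvalues of a sum of Hermitian matrices, which is exactly your quadratic-form estimate $\langle\zeta,\mathcal{S}\zeta\rangle\le\big(-\delta(x)\min_i\mathbf{M}_i\lambda_{i+6}+(w_+-w_-)\sigma^{\Theta(x)}_d\big)|\zeta|^2$.

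The gap is in the case $m=1$. You propose to reduce it to $m=2$ by proving $q_1\ge q_2$, i.e.\ $\sigma^{\Theta(x)}_d\le\big(\min_i\mathbf{M}_i\lambda_{i+6}\big)\max_i\theta_i(x)$, and you flag this as the main unresolved obstacle. That inequality is not the right target and does not hold in general. Writing $s_i(x)=\sum_j|\Theta_{ij}(x)|$ and $\Lambda_i$ for the diagonal entries of $\Lambda$, one has $\theta_i=s_i/\Lambda_i$; Gershgorin only gives $\sigma^{\Theta}_d\le\max_i s_i$, and the quantity $(\min_i\Lambda_i)\,\max_i(s_i/\Lambda_i)$ can be strictly smaller than $\max_i s_i$ (and than $\sigma^{\Theta}_d$) whenever the row with the largest row sum is not the row with the smallest weight $\Lambda_i$. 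In fact neither $q_1\ge q_2$ nor $q_2\ge q_1$ holds for all beam parameters --- this is precisely why the lemma offers \emph{two} alternative sufficient conditions. The intended proof for $m=1$ does not pass through $\sigma^{\Theta}_d$ at all: it applies strict diagonal dominance (Gershgorin) directly to the symmetric matrix $\mathcal{S}$ itself. Its diagonal entries are $-\big|\tfrac{\mathrm{d}w_\pm}{\mathrm{d}x}\big|\Lambda_i<0$ and its off-diagonal absolute row sums are $(w_+-w_-)s_i$, so $\mathcal{S}(x)$ is negative definite as soon as $\big|\tfrac{\mathrm{d}w_\pm}{\mathrm{d}x}\big|\Lambda_i>(w_+-w_-)s_i$ for every row, i.e.\ $\min\big\{\big|\tfrac{\mathrm{d}w_-}{\mathrm{d}x}\big|,\big|\tfrac{\mathrm{d}w_+}{\mathrm{d}x}\big|\big\}>(w_+-w_-)\theta_i$ for every $i$. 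The normalization by $\Lambda_i$ is done row by row rather than uniformly by $\min_i\Lambda_i$, which is what makes $q_1$ a condition genuinely independent of $q_2$; identifying the stated $\theta_i$ with the normalized row sums $\sum_j\big|\big(\mathbf{E}D\mathbf{M}+(\mathbf{E}D\mathbf{M})^\intercal\big)_{ij}\big|/(\mathbf{M}D)_{ii}$ is then a direct computation from \eqref{eq:def_E_bold} and \eqref{eq:def_J_S1_S2}, as you anticipated. A secondary point: your integration argument for $w_+-w_-\ge0$ does not close (you add a nonpositive boundary term to a nonnegative integral and conclude nonnegativity). Both your estimate and the diagonal-dominance one need this sign to replace $|w_+-w_-|$ by $w_+-w_-$; with the monotonicity assumptions it follows immediately from $w_-\le w_+$ at $x=\ell$, which is the form actually used in the application (condition \eqref{eq:condell_2}), via $w_-(x)\le w_-(\ell)\le w_+(\ell)\le w_+(x)$.
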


\begin{proof}[Proof of \cref{lem:neg_Psi}]
Let $x \in [0,\ell ]$. Let us start with $m = 1$. By \cite[Def. 6.1.9, Coro. 7.2.3]{horn2012matrix}, if a matrix is strictly diagonally dominant with negative diagonal entries, then it is negative definite. Since $\mathrm{diag}\big(-\tfrac{\mathrm{d}w_-}{\mathrm{d}x}(x) \mathds{I}_6, \tfrac{\mathrm{d}w_+}{\mathrm{d}x}(x) \mathds{I}_6 \big) \Lambda$ is negative definite and the diagonal entries of $(w_+(x) - w_-(x))\Theta(x)$ are null, $\mathcal{S}(x)$ is negative definite if
\begin{align} \label{eq:ineq_diag_dom}
\left(\mathrm{diag}\left(\left|\tfrac{\mathrm{d}w_-}{\mathrm{d}x}(x)\right| \mathds{I}_6, \left|\tfrac{\mathrm{d}w_+}{\mathrm{d}x}(x)\right| \mathds{I}_6 \right) \Lambda \right)_i> (w_+(x)-w_-(x)) \sum_{j=1}^d|\Theta_{ij}(x)|
\end{align}
holds for all $i \in \{ 1 \ldots d \}$. 
By the definition of $\Theta$ and $\Lambda$, \eqref{eq:ineq_diag_dom} is equivalent to 
\begin{align*}
\min \left\{ \left|\tfrac{\mathrm{d}w_-}{\mathrm{d}x}\right|, \left|\tfrac{\mathrm{d}w_+}{\mathrm{d}x}\right| \right\} > (w_+ -w_-) \theta_i, \quad \ \text{where } \ \theta_i = \sum_{j=1}^6 \Big| \big(\mathbf{M}^{-1} D \mathbf{E} D \mathbf{M} + \mathbf{E}^\intercal  \big)_{i,j} \Big|,
\end{align*}
holding for all $i \in \{ 1 \ldots d \}$, where we omitted the argument $x$ for clarity. It remains to look into the definition of $\mathbf{E},\mathbf{M}$ and $D$ to deduce that each $\theta_i$ of the above equation has the form given above \cref{lem:neg_Psi}.
This finishes the proof for the case $m = 1$.

We now consider the case $m=2$. Denote by $\{\sigma^M_i\}_{i=1}^n$ the eigenvalues of an Hermitian matrix $M \in \mathbb{R}^{n \times n}$ in nondecreasing order (then the largest eigenvalue of $M$ is $\sigma^M_n$). Weyl's Theorem  \cite[Th. 4.3.1, Coro. 4.3.15]{horn2012matrix} provides the bound $\sigma^{M_1+M_2}_i \leq \sigma^{M_1}_i + \sigma^{M_2}_n$ on the eigenvalues of the sum of Hermitian matrices $M_1, M_2 \in \mathbb{R}^{n \times n}$.
Hence, the eigenvalues of $\mathcal{S}(x)$ necessarily satisfy
\begin{equation*}
\sigma^{\mathcal{S}(x)}_i \leq - \min \left\{ \left|\tfrac{\mathrm{d}w_-}{\mathrm{d}x}(x)\right|, \left|\tfrac{\mathrm{d}w_+}{\mathrm{d}x}(x)\right| \right\} \Big( \min_{1 \leq i \leq 6}\mathbf{M}_i \lambda_{i+6} \Big) + (w_+(x) - w_-(x)) \sigma^{\Theta(x)}_d,
\end{equation*}
and they are all negative if the above right-hand side is negative.
\end{proof}

\begin{lemma} \label{lem:existence_varphi}
Let $c>0$. There exists $\varphi  \in C^1([0, \ell ])$ such that
\begin{align} \label{eq:g_properties}
\varphi (x)>0, \quad \frac{\mathrm{d}\varphi}{\mathrm{d}x}(x)>0, \quad \frac{\mathrm{d}\varphi}{\mathrm{d}x}(x) > 2 c \, (\varphi (\ell ) - \varphi (x)), \qquad \text{for all }x \in [0,\ell ],
\end{align}
and $0 < \varphi (0) < \varphi (\ell )$ may be chosen arbitrarily.
\end{lemma}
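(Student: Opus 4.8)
The plan is to exhibit $\varphi$ explicitly by a decaying-exponential ansatz. It is convenient to set $\psi(x) = \varphi(\ell) - \varphi(x)$, so that the requirements $\varphi>0$, $\varphi'>0$, $\varphi'>2c(\varphi(\ell)-\varphi)$ together with $\varphi(0)<\varphi(\ell)$ translate into: $\psi \geq 0$ on $[0,\ell]$, $\psi(\ell)=0$, $\psi(0)>0$, and $\psi' + 2c\psi < 0$. A natural candidate solving $\psi'+2c\psi<0$ with $\psi(\ell)=0$ is $\psi(x) = C\big(e^{-\gamma x} - e^{-\gamma \ell}\big)$ for constants $C>0$ and $\gamma \geq 2c$.

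First I would fix the prescribed boundary values $0<\varphi(0)<\varphi(\ell)$, choose any $\gamma \geq 2c$ (for instance $\gamma = 2c$), and then set
\[
C = \frac{\varphi(\ell)-\varphi(0)}{1-e^{-\gamma \ell}}>0, \qquad \varphi(x) := \varphi(\ell) - C\big(e^{-\gamma x}-e^{-\gamma \ell}\big).
\]
Then I would verify the properties in turn: (i) $\varphi\in C^1([0,\ell])$ is immediate; (ii) $\varphi(\ell)$ equals the prescribed value by construction, and $\varphi(0) = \varphi(\ell) - C(1-e^{-\gamma\ell})$ equals the prescribed value by the choice of $C$; (iii) $\varphi'(x) = C\gamma e^{-\gamma x}>0$ for all $x\in[0,\ell]$; (iv) since $\varphi$ is increasing and $\varphi(0)>0$, one has $\varphi(x)\geq\varphi(0)>0$ on $[0,\ell]$; (v) for the main inequality, $\varphi'(x) - 2c\big(\varphi(\ell)-\varphi(x)\big) = C\gamma e^{-\gamma x} - 2cC\big(e^{-\gamma x}-e^{-\gamma\ell}\big) = C\big[(\gamma-2c)e^{-\gamma x} + 2c\,e^{-\gamma\ell}\big]$, which is strictly positive because $\gamma\geq 2c$ and $C,c>0$.

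There is essentially no obstacle here; the only thing to be careful about is the shape of the ansatz. A pure exponential $e^{-\gamma x}$ (or $e^{\gamma x}$) cannot meet $\varphi(\ell)-\varphi(x)=0$ at $x=\ell$, so one subtracts the boundary value and works with $e^{-\gamma x}-e^{-\gamma\ell}$; and a growing ansatz $a+be^{\gamma x}$ would force $\gamma + 2c > 2c\,e^{\gamma \ell}$, which fails when $2c\ell$ is not small, whereas the decaying ansatz above works for every $c>0$ and every $\ell>0$ as soon as $\gamma\geq 2c$. Once the ansatz is chosen correctly, the verification is the one-line computation in step (v).
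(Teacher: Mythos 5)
Your proposal is correct, and it follows essentially the same strategy as the paper: exhibit an explicit decaying-exponential candidate with the prescribed endpoint values and verify the differential inequality $\varphi' > 2c(\varphi(\ell)-\varphi)$ by a one-line computation. The only difference is cosmetic — the paper's candidate is $\varphi(x)=\varphi(\ell)-e^{-2cx}\left(1-x/\ell\right)(\varphi(\ell)-\varphi(0))$, obtained by integrating the inequality with the integrating factor $e^{2cx}$, whereas you force the vanishing of $\varphi(\ell)-\varphi(x)$ at $x=\ell$ by subtracting $e^{-\gamma\ell}$ instead of multiplying by $(1-x/\ell)$; both verifications go through for every $c>0$ and $\ell>0$.
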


\begin{proof}[Proof of \cref{lem:existence_varphi}]
Notice that \eqref{eq:g_properties} is equivalent to
\begin{align} \label{eq:g_properties_equiv}
0 < \varphi (0) < \varphi (\ell ), \qquad \tfrac{\mathrm{d}}{\mathrm{d}x} \varphi(x) > 2 c \, (\varphi (\ell ) - \varphi (x)), \qquad \text{for all }x \in [0, \ell ].
\end{align}
Let $\alpha = 2c$. The inequality $\tfrac{\mathrm{d}\varphi}{\mathrm{d}x}  > 2c (\varphi (\ell ) - \varphi )$ is equivalent to
\begin{align} \label{eq:cond_gprime_ineq1}
e^{\alpha x}(\tfrac{\mathrm{d}}{\mathrm{d}x}\varphi (x) + \alpha (\varphi (x) - \varphi (\ell )))>0, \qquad \text{for all }x \in [0, \ell ].
\end{align}
In the above left-hand side, one recognizes  the derivative of
$x \mapsto e^{\alpha x}(\varphi (x) - \varphi (\ell ))$, hence \eqref{eq:cond_gprime_ineq1} holds if and only if $\frac{\mathrm{d}}{\mathrm{d}x} \left( e^{\alpha x}(\varphi (x) - \varphi (\ell )) \right) \geq \varepsilon$ for some $\varepsilon>0$.
Integrating this inequality over $[0, x]$ and isolating the term $\varphi (x)$ on one side, this is equivalent to $\varphi (x) \geq \varphi (\ell ) - e^{-\alpha x} (\varphi (\ell ) - \varphi (0) - \varepsilon x)$.
We choose as a candidate the function $\varphi (x) := \varphi (\ell ) - e^{-\alpha x} (\varphi (\ell ) - \varphi (0) - \varepsilon x)$.
Equality at $x=\ell $ is true if and only if $\varepsilon = \frac{\varphi (\ell ) - \varphi (0)}{\ell }$, which is positive if and only if $\varphi (\ell )>\varphi (0)$. Then, $\varphi $ writes as
\begin{align} \label{eq:existence_g_proof}
\varphi (x) = \varphi (\ell ) - e^{-\alpha x}\left( 1 - x \ell ^{-1}\right)(\varphi (\ell ) - \varphi (0)),
\end{align}
and fulfills $\tfrac{\mathrm{d}\varphi}{\mathrm{d}x}>2c(\varphi (\ell ) - \varphi )$ by construction. Assuming that $\varphi (0)>0$, the function $\varphi $ defined by \eqref{eq:existence_g_proof} now satisfies \eqref{eq:g_properties_equiv}, and this concludes the proof.
\end{proof}


We are in position to prove the first main result.

\begin{proof}[Proof of \cref{thm:stabilization}]
To apply \cref{prop:coron_exp_stab_H1}, one should find a map $Q \in C^1([0,\ell ]; \mathcal{D}^+(d))$ fulfilling the three matrix inequalities in \eqref{eq:coron_th_bound_cond}-\eqref{eq:coron_th_struct_cond}. Note that \eqref{eq:coron_th_struct_cond} cannot hold if $Q$ is constant, since the trace of $QB+B^\intercal Q$ is null (implying that this matrix is indefinite). Hence, it appears that $Q$ should be chosen in such a way that \eqref{eq:coron_th_struct_cond} holds due to the presence of $\tfrac{\mathrm{d}}{\mathrm{d}x}Q \mathbf{D}$.
We will proceed as follows:
\begin{enumerate}[label=\arabic*)]
\item Step 1: Based on \cref{rem:special_form_B}, we choose $Q = \mathrm{diag}(w_- \mathds{I}_6, w_+ \mathds{I}_6)Q^\mathcal{D}$ as an Ansatz, where $Q^\mathcal{D}$ is the matrix that characterizes the beam energy for the diagonal system \eqref{eq:intrinsic_charact}, and $w_-, w_+ \in C^1([0, \ell])$ are positive \emph{weights}. 
\item Step 2: As $QB+B^\intercal Q$ is indefinite, we choose the monotonicity of the weights in such a way that $\tfrac{\mathrm{d}}{\mathrm{d}x}Q \mathbf{D}$ is negative definite. By means of \cref{lem:neg_Psi}, we obtain more explicit conditions on the weights which are sufficient for the matrix inequalities \eqref{eq:coron_th_bound_cond}-\eqref{eq:coron_th_struct_cond} to be fulfilled.
\item Step 3: We show that such weights exist with the help of \cref{lem:existence_varphi}.
\end{enumerate}

\subsubsection*{Step 1: Ansatz for $Q$}

In \cref{sec:riem_energy}, we have seen that the energy of the beam for the IGEB model in Riemann invariants is characterized by the matrix $Q^\mathcal{D}$, defined in \eqref{eq:def_QD}. Furthermore, we have seen that the product $Q^\mathcal{D}B$ has the specific form given in \cref{rem:special_form_B}.
Let the functions $w_-, w_+ \in C^1([0, \ell ])$ be such that
\begin{align}\label{eq:weights_positive}
w_->0, \quad w_+>0, \quad \text{in }[0, \ell].
\end{align}
To simplify the task of finding $Q$, we choose the following ansatz for $Q$:
\begin{align} \label{eq:ansatz_Q}
Q(x) =  W(x) Q^\mathcal{D}, \qquad \text{where } \ W = \mathrm{diag} \left(w_-\mathds{I}_6, w_+ \mathds{I}_6 \right),
\end{align}
and $w_-, w_+$ are called \textit{weights}. The matrix $Q$ defined by \eqref{eq:ansatz_Q} fulfills the conditions of \cref{prop:coron_exp_stab_H1} if and only if
\begin{align} \label{eq:weights_cond_0_L}
w_+(0) \leq C_\kappa^{-1}w_-(0), \qquad w_-(\ell ) \leq w_+(\ell ),
\end{align} 
and, for any $x \in [0, \ell ]$, the matrix
\begin{align} \label{eq:Psi_Lambda_Theta}
\mathrm{diag}\left(- \tfrac{\mathrm{d}}{\mathrm{d}x}w_-\mathds{I}_6, \tfrac{\mathrm{d}}{\mathrm{d}x} w_+ \mathds{I}_6\right) \Lambda + (w_+ - w_-)\Theta
\end{align}
is negative definite, where where $C_\kappa$, $\Lambda$ and $\Theta$ are defined in \eqref{eq:def_C_kappa}, \eqref{eq:def_Lam} and \eqref{eq:def_The} respectively. Indeed, on the one hand, $(\kappa^2 Q_+(0) - Q_-(0)) = \tfrac{1}{2}(w_+(0) \kappa^2 - w_-(0)\mathds{I}_6) \mathbf{M}$ while $(Q_-(\ell ) - Q_+(\ell )) = \tfrac{1}{2}(w_-(\ell ) - w_+(\ell )) \mathbf{M}$,
and both diagonal matrices are negative semi-definite if and only if \eqref{eq:weights_cond_0_L} holds. On the other hand, the products $QB$ and $B^\intercal Q$ now write as (see \cref{rem:special_form_B})
\begin{align*}
QB
= \begin{bmatrix}
-w_-(\mathcal{B}-\mathcal{B}^\intercal) & -w_-(\mathcal{B}+\mathcal{B}^\intercal) \\ w_+(\mathcal{B}+\mathcal{B}^\intercal) & w_+(\mathcal{B}-\mathcal{B}^\intercal)
\end{bmatrix}, \quad B^\intercal Q = \begin{bmatrix}
w_-(\mathcal{B}-\mathcal{B}^\intercal) & w_+(\mathcal{B}+\mathcal{B}^\intercal) \\ -w_-(\mathcal{B}+\mathcal{B}^\intercal) & -w_+(\mathcal{B}-\mathcal{B}^\intercal)
\end{bmatrix},
\end{align*}
where $\mathcal{B} = \tfrac{1}{2} \mathbf{E}D\mathbf{M}$. Hence, the sum yields
\begin{equation*}
QB + B^\intercal Q = (w_+ - w_-) \begin{bmatrix}
\mathds{O}_6 & \mathcal{B}+\mathcal{B}^\intercal \\ \mathcal{B} + \mathcal{B}^\intercal & \mathds{O}_6
\end{bmatrix},
\end{equation*}
and \eqref{eq:coron_th_struct_cond} holds if and only if \eqref{eq:Psi_Lambda_Theta} is negative definite for any $x \in [0, \ell]$.

\subsubsection*{Step 2: Assumption on the weights}

Since $\Theta$ is indefinite, our strategy is to choose $w_-, w_+$ such that the first term in of \eqref{eq:Psi_Lambda_Theta} is negative definite and sufficiently large (in some sense), in comparison to the second term, for \eqref{eq:Psi_Lambda_Theta} to be negative definite. To make this first term negative definite, we additionally assume that
\begin{align} \label{eq:cond_weights_deriv}
\frac{\mathrm{d}w_-}{\mathrm{d}x}>0, \quad \frac{\mathrm{d}w_+}{\mathrm{d}x} <0, \qquad \text{in } [0,\ell ].
\end{align}
Now, not only are the weights are positive, but also $w_+$ is decreasing while $w_-$ is increasing. 
For such weights, \eqref{eq:weights_cond_0_L} is equivalent to
\begin{align}
&\frac{w_+(0)}{w_-(0)}  \in \left(1 \,, C_\kappa^{-1}\right], \label{eq:cond0_2}\\
&w_- \leq w_+, \quad \text{in }[0, \ell]. \label{eq:condell_2}
\end{align}
We now make use of \cref{lem:neg_Psi} to obtain an explicit condition on the weights and their derivatives that is sufficient for \eqref{eq:Psi_Lambda_Theta} to be negative definite for any $x \in [0,\ell ]$. This lemma yields that if $w_- \leq w_+$ in $[0, \ell]$ and
\begin{align} \label{eq:condition_der_diff_weights}
\min \left\{\left|\tfrac{\mathrm{d}w_-}{\mathrm{d}x}\right|, \left| \tfrac{\mathrm{d}w_+}{\mathrm{d}x} \right| \right\} > (w_+ - w_-) q_m, \quad \text{in }[0,\ell ],
\end{align}
for some $m \in \{1, 2\}$, then the matrix \eqref{eq:Psi_Lambda_Theta} is negative definite for any $x \in [0, \ell]$.

\subsubsection*{Step 3: Existence of the weights}

To finish the proof, one has to find weights fulfilling \eqref{eq:weights_positive} and \eqref{eq:cond_weights_deriv}, as well as \eqref{eq:cond0_2}-\eqref{eq:condell_2}-\eqref{eq:condition_der_diff_weights}.
One can easily find different weights satisfying \eqref{eq:weights_positive}, \eqref{eq:cond_weights_deriv} and \eqref{eq:condell_2}, using straight lines, exponential functions or cotangent functions, for instance.
However, it is not straightforward to find weights also satisfying \eqref{eq:cond0_2} and \eqref{eq:condition_der_diff_weights} for realistic beam parameters respecting the assumptions of the beam model,
especially as some of these parameters are linked to the others. For instance, $\ell, I_2, I_3$ and $a$ are related, and so are $E$ and $G$.

We use \cref{lem:existence_varphi} to obtain such weights without adding any constraint on the beam parameters.
Let $m \in \{1, 2\}$ and define $C_{q_m}>0$ by 
\begin{align} \label{eq:def_c_weights}
C_{q_m} = \max_{x \in [0,\ell ]} q_m(x),
\end{align}
where $q_m$ is defined by \eqref{eq:def_q1_q2}.
By \cref{lem:existence_varphi}, there exists $\varphi \in C^1([0, \ell ])$ such that
\begin{align} \label{eq:hyp_g_Cqi}
\varphi >0, \quad \frac{\mathrm{d}\varphi}{\mathrm{d}x}  >0, \quad \frac{\mathrm{d} \varphi}{\mathrm{d}x} > 2 C_{q_m} \, (\varphi (\ell ) - \varphi ), \qquad \text{in }[0, \ell ].
\end{align}

Then, the weights $w_+,w_- \in C^1([0, \ell ]; \mathbb{R})$ defined by
\begin{align} \label{eq:weights_struct}
w_- = \varphi , \qquad w_+ = 2 \varphi (\ell ) - \varphi ,
\end{align}
satisfy \eqref{eq:weights_positive}, \eqref{eq:cond_weights_deriv}, \eqref{eq:condell_2} and \eqref{eq:condition_der_diff_weights}.
Indeed, both weights are positive since $w_- = \varphi  >0$ and $w_+ > 2\varphi (\ell ) - \varphi (0) > 0$, with monoticity $\tfrac{\mathrm{d} w_-}{\mathrm{d}x} = \tfrac{\mathrm{d}\varphi}{\mathrm{d}x} >0$ and $\tfrac{\mathrm{d}w_+}{\mathrm{d}x} = -\tfrac{\mathrm{d}\varphi}{\mathrm{d}x} <0$. Since $\varphi $ satisfies $\tfrac{\mathrm{d}}{\mathrm{d}x} \varphi (x)  >  2 q_m(x) \, (\varphi (\ell ) - \varphi (x))$ for all  $x \in [0, \ell ]$, we deduce that \eqref{eq:condition_der_diff_weights} holds, as $\min \big\{\big|\tfrac{\mathrm{d}w_-}{\mathrm{d}x}(x)\big|, \big|\tfrac{\mathrm{d}w_+}{\mathrm{d}x}(x) \big| \big\} = \tfrac{\mathrm{d}\varphi}{\mathrm{d}x}(x)$ and $(w_+ - w_-)(x) = 2(\varphi (\ell ) - \varphi (x))$.

Furthermore, \eqref{eq:cond0_2} is also fulfilled if $\varphi $ additionally satisfies
\begin{align} \label{eq:hyp_g_0}
\varphi (\ell ) \in \left[\varphi (0) \,, \frac{1+C_\kappa^{-1}}{2}\varphi (0) \right].
\end{align}
This follows from rewriting condition \eqref{eq:cond0_2} using that $w_-(0) = \varphi (0)$ and $w_+(0) = 2\varphi (\ell ) - \varphi (0)$.
This concludes the proof.
\end{proof}

\subsection{Additional comments} Let us make some comments.

\paragraph{Feedback parameters}
In \eqref{eq:cond0_2}, we observe that $\kappa$ determines how different from one another the weights are allowed to be at $x=0$: if $\kappa$ is closer to the null matrix, then the weights are less constrained. 
Hence, for fixed beam parameters $(a, \rho, E, G, I_2, I_3, \{k_i\}_{i=1}^3, \ell)$, the choice of the feedback parameters $\mu_1, \mu_2>0$ influences this constraint. Both $\mu_1$ and $\mu_2$ must be nonzero, as otherwise $C_\kappa = 1$ and the interval $(1 \,, C_\kappa^{-1}]$ is empty.
Furthermore, one can show that the smallest $C_\kappa$ is obtained for
\begin{align} \label{eq:feedbackPara_leastConstrain}
\mu_1 = \sqrt{\left(\min_{1\leq i \leq 3}b_i \right)\left(\max_{1\leq i \leq 3} b_i \right)}, \qquad \mu_2 = \sqrt{\left(\min_{4\leq i \leq 6} b_i \right)\left(\max_{4\leq i \leq 6} b_i \right)},
\end{align}
where $\{b_i\}_{i=1}^6$ are the diagonal entries of $\mathbf{M}D$.

\paragraph{Form of the weights}
\begin{figure}[h]
\centering
\includegraphics[scale = 0.55]{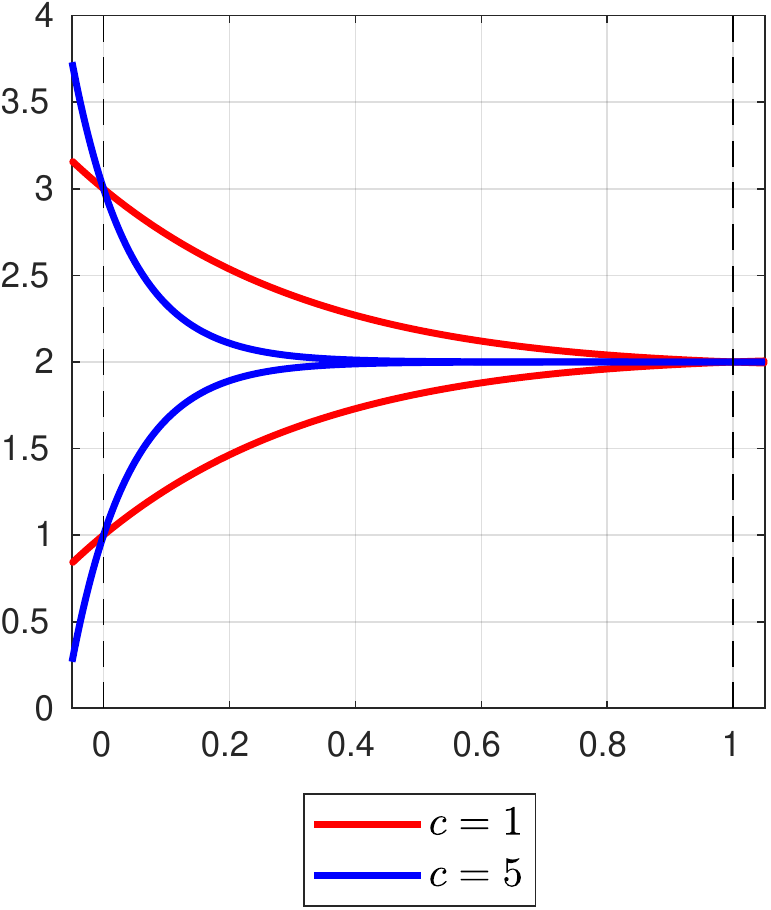}%
\hspace{0.5cm}\includegraphics[scale=0.55]{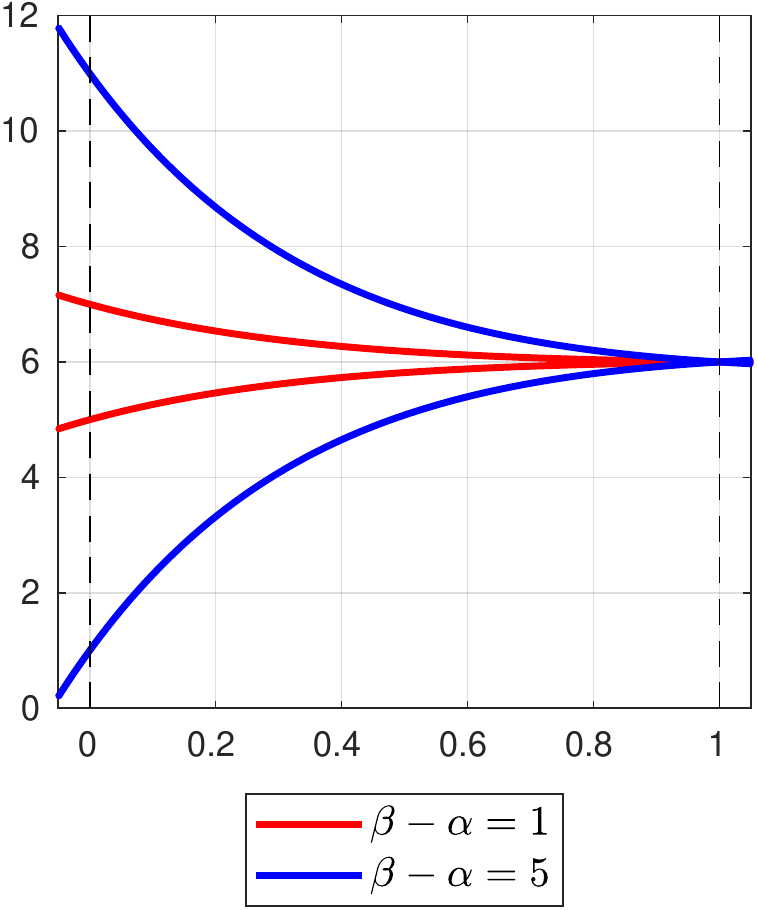}
\caption{The weights \eqref{eq:def_weights_g} for varying $c$ and $\beta-\alpha=1$ (left), for varying $\beta-\alpha$ and $c=1$ (right). Here, $\ell  = \alpha = 1$, the upper decreasing curve is $w_+$ and the lower increasing curve is $w_-$.}
\label{fig:g_weights}
\end{figure}
Let $\alpha>0$ and $c = C_{q_m}$ for $m \in \{1, 2\}$. The function $\varphi $ fulfilling \eqref{eq:hyp_g_Cqi}-\eqref{eq:hyp_g_0} can be chosen as $\varphi (x) = \beta - e^{- 2c x}\left(1 - \frac{x}{\ell }\right)(\beta-\alpha)$ for some $\beta$ belonging to $\left(\alpha \,, \frac{1}{2}(1+C_\kappa^{-1})\alpha \right]$. The corresponding weights \eqref{eq:weights_struct} are (see \cref{fig:g_weights})
\begin{align}\label{eq:def_weights_g}
\Scale[0.95]{
\begin{aligned}
w_-(x) = \beta - e^{- 2c x}\left(1 - \frac{x}{\ell }\right)(\beta-\alpha), \quad w_+(x) = \beta + e^{- 2c x}\left(1 - \frac{x}{\ell }\right)(\beta-\alpha).
\end{aligned}}
\end{align}

\paragraph{Initially straight beam}
In the particular case of a straight and untwisted beam with centerline $p(x) = e_1 x$ before deformation (see \cref{sec:unknowns_coef}), we can compute the constants $C_{q_1}$ and $C_{q_2}$ defined by \eqref{eq:def_c_weights}. They are
\begin{align*}
C_{q_1} &= \max \left\{1, \ a\sqrt{k_3 G} (I_2 \sqrt{E})^{-1}, \ a \sqrt{k_2 G} (I_3 \sqrt{E})^{-1} \right\},\\
C_{q_2} &= \max \{\lambda_8, \lambda_9\} \left(\min \left\{\lambda_7, \ \lambda_8, \ \lambda_9, \ a^{-1}k_1(I_2+I_3)\lambda_{10}, \ a^{-1}I_2 \lambda_7, \ a^{-1}I_3 \lambda_7 \right\}\right)^{-1}.
\end{align*}

\section{Proof of \cref{thm:fromIGEBtoGEB}}
\label{sec:fromIGEBtoGEB}

Finally, making use of the first main result \cref{thm:stabilization}, we want to prove the existence of a unique solution to the GEB model \eqref{eq:GEB} and some stability properties of this solution. 
We will use the notion of \emph{quaternion} (see \cite{chou1992} and the references therein).
A quaternion is a pair of real value $q_0 \in \mathbb{R}$ and vectorial value $q \in \mathbb{R}^3$, that we denote here as the vector $\mathbf{q} =(q_0, q^\intercal)^\intercal$.
A rotation matrix $\mathbf{R} \in \mathrm{SO}(3)$ is said to be parametrized by the quaternion $\mathbf{q} \in \mathbb{R}^4$, if $|\mathbf{q}| = 1$ and 
\begin{align} \label{eq:rot_param_quat}
\mathbf{R} = (q_0^2 - \langle q \,, q \rangle)\mathds{I}_3 + 2 q q^\intercal + 2 q_0 \widehat{q}.
\end{align}
When computing a quaternion from the rotation matrix there is a sign ambiguity as both $\mathbf{q}$ and its opposite $-\mathbf{q}$ represent the same rotation matrix.
We will say that the map $\mathbf{R}\colon [0, \ell]\times [0, T] \rightarrow \mathrm{SO}(3)$ is parametrized by the quaternion-valued function $\mathbf{q}\colon [0, \ell]\times [0, T] \rightarrow \mathbb{R}^4$, if $|\mathbf{q}| \equiv 1$ and \eqref{eq:rot_param_quat} is fulfilled for all $(x,t) \in [0, \ell]\times [0, T]$.

We start by giving two lemmas of use in the proof of \cref{thm:fromIGEBtoGEB}.
The first lemma will allow us to rewrite a linear PDE whose unknown $\mathbf{R}$ has values in $\mathrm{SO}(3)$, as another linear PDE whose unknown state is the quaternion-valued map $\mathbf{q}$ which parametrizes $\mathbf{R}$. Let us introduce the function $\mathcal{U}$ defined by
\begin{align*} 
\mathcal{U}(v) = \frac{1}{2} \begin{bmatrix}
0 & - v^\intercal \\
v & \widehat{v}
\end{bmatrix}, \qquad \text{for all }v \in \mathbb{R}^3. 
\end{align*}

\begin{lemma} \label{lem:quat_rot_ODE}
Let $f \in C^1([0, \ell]\times[0, +\infty); \mathbb{R}^3)$ and let $z$ represent either the variable $x$ or the variable $t$.
The function $\mathbf{q} \in C^1([0, \ell]\times[0, +\infty); \mathbb{R}^4)$ fulfills both $|\mathbf{q}| \equiv 1$ and
\begin{align} \label{eq:zODE_quat_U}
\partial_z \mathbf{q} (x, t)= \mathcal{U}(f(x, t)) \mathbf{q}(x, t), \qquad \text{for all } (x,t) \in [0, \ell]\times[0, +\infty),
\end{align}
if and only if the map $\mathbf{R} \in C^1([0, \ell]\times[0, +\infty); \mathrm{SO}(3))$ parametrized by $\mathbf{q}$ fulfills
\begin{align*} 
\partial_z \mathbf{R}(x, t) = \mathbf{R}(x, t) \widehat{f(x, t)}, \qquad \text{for all } (x,t) \in [0, \ell]\times[0, +\infty).
\end{align*}
\end{lemma}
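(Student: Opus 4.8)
The plan is to prove the two implications separately, working pointwise in $(x,t)$ (the variable complementary to $z$ is a passive parameter throughout). First I would record the component form of \eqref{eq:zODE_quat_U}: writing $\mathbf q = (q_0, q^\intercal)^\intercal$, the equation $\partial_z\mathbf q = \mathcal U(f)\mathbf q$ reads $\partial_z q_0 = -\tfrac12\langle f, q\rangle$ and $\partial_z q = \tfrac12(q_0 f + f\times q)$; equivalently, $\mathcal U(f)\mathbf q$ is one half the quaternion product of the pure quaternion $(0, f^\intercal)^\intercal$ with $\mathbf q$. The unit constraint $|\mathbf q|\equiv1$ is present on both sides (it is part of the notion ``$\mathbf R$ parametrized by $\mathbf q$''), and it is in any case compatible with the evolution because $\mathcal U(f)^\intercal = -\mathcal U(f)$ gives $\tfrac{\mathrm d}{\mathrm dz}|\mathbf q|^2 \equiv 0$; so it needs no separate treatment.

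\emph{``Only if''.} Given $\mathbf q$ solving the component equations with $|\mathbf q|\equiv1$, and $\mathbf R$ defined from $\mathbf q$ by \eqref{eq:rot_param_quat}, I would differentiate the conjugation form of \eqref{eq:rot_param_quat}, namely $(0, (\mathbf R\zeta)^\intercal)^\intercal = \mathbf q\otimes(0,\zeta^\intercal)^\intercal\otimes\mathbf q^*$ for an arbitrary fixed $\zeta\in\mathbb R^3$ (here $\mathbf q^*$ is the conjugate quaternion, equal to $\mathbf q^{-1}$ since $|\mathbf q|=1$). Substituting $\partial_z\mathbf q = \tfrac12(0,f^\intercal)^\intercal\otimes\mathbf q$ and hence $\partial_z\mathbf q^* = -\tfrac12\mathbf q^*\otimes(0,f^\intercal)^\intercal$, the two cross terms combine into one half the commutator of the pure quaternion $(0,f^\intercal)^\intercal$ with the pure quaternion $\mathbf q\otimes(0,\zeta^\intercal)^\intercal\otimes\mathbf q^* = (0,(\mathbf R\zeta)^\intercal)^\intercal$; using $(0,a^\intercal)^\intercal\otimes(0,b^\intercal)^\intercal = (-\langle a,b\rangle, (a\times b)^\intercal)^\intercal$ this commutator is again pure and, after simplification, equals $(0, (\mathbf R\widehat f\,\zeta)^\intercal)^\intercal$. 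As $\zeta$ is arbitrary, $\partial_z\mathbf R = \mathbf R\widehat f$. (A more pedestrian route is to differentiate \eqref{eq:rot_param_quat} directly and collapse the outcome with the identities $\widehat a\widehat b = ba^\intercal - \langle a,b\rangle\mathds{I}_3$, $\widehat{a\times b} = ba^\intercal - ab^\intercal$, $a\times(b\times a) = |a|^2 b - \langle a,b\rangle a$ and $\langle a, a\times b\rangle = 0$.) This is the technical heart of the lemma and the step I expect to be the main obstacle --- not conceptually, but because one must hold the quaternion-multiplication convention and the side on which $\mathbf q^*$ acts exactly consistent with \eqref{eq:rot_param_quat}, so that the two factors land on the side of $\widehat f$ prescribed in the statement.

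\emph{``If''.} Conversely, suppose $\mathbf q\in C^1$ with $|\mathbf q|\equiv1$, and that the rotation $\mathbf R$ it parametrizes satisfies $\partial_z\mathbf R = \mathbf R\widehat f$. Differentiating $|\mathbf q|^2\equiv1$ gives $\langle\mathbf q,\partial_z\mathbf q\rangle\equiv0$. Since left multiplication by the unit quaternion $\mathbf q$ is a linear isometry of $\mathbb R^4$ carrying the three-dimensional subspace of pure quaternions onto $\mathbf q^\perp$, there is a unique continuous $g\colon[0,\ell]\times[0,+\infty)\to\mathbb R^3$ with $\partial_z\mathbf q = \mathcal U(g)\mathbf q$ --- explicitly $(0, g^\intercal)^\intercal = 2\,(\partial_z\mathbf q)\otimes\mathbf q^*$, whose scalar part vanishes precisely because $\langle\mathbf q,\partial_z\mathbf q\rangle = 0$. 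Applying the ``only if'' direction to $\mathbf q$ and $g$ yields $\partial_z\mathbf R = \mathbf R\widehat g$; comparing with the hypothesis and using that $\mathbf R$ is invertible and $u\mapsto\widehat u$ injective forces $g\equiv f$, which is \eqref{eq:zODE_quat_U}. (Alternatively, by uniqueness for the linear ODE $\partial_z(\cdot) = (\cdot)\widehat f$ in $z$: the rotation parametrized by the solution of $\partial_z\mathbf q^\sharp = \mathcal U(f)\mathbf q^\sharp$ agreeing with $\mathbf q$ at some $z_0$ coincides, by the ``only if'' part, with $\mathbf R$, hence so does the quaternion up to the global sign fixed at $z_0$.)
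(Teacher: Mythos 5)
The paper does not actually print a proof of \cref{lem:quat_rot_ODE}: it states only that the (omitted) argument ``rests on extensive but elementary computations involving the relationship \eqref{eq:rot_param_quat}, the definition of the quaternion product and properties of the cross product.'' Your overall plan is the same strategy for the forward implication, and your converse argument --- $|\mathbf q|\equiv 1$ forces $\partial_z\mathbf q\perp\mathbf q$, hence a unique $g$ with $\partial_z\mathbf q=\mathcal U(g)\mathbf q$, then apply the forward implication and use invertibility of $\mathbf R$ and injectivity of $u\mapsto\widehat u$ --- is clean and structurally correct (the paper says nothing about how it handles that direction).

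There is, however, a genuine problem in the central computation, and it is exactly the pitfall you flagged without resolving. Take the standard quaternion product, for which $(0,a^\intercal)^\intercal\circ(0,b^\intercal)^\intercal=(-\langle a,b\rangle,(a\times b)^\intercal)^\intercal$ as you use. Under that product one checks (e.g.\ on $\mathbf q=(\cos\tfrac\theta2,\sin\tfrac\theta2\,e_3^\intercal)^\intercal$) that \eqref{eq:rot_param_quat} is indeed the conjugation $\zeta\mapsto\mathbf q\circ(0,\zeta^\intercal)^\intercal\circ\mathbf q^*$, and that $\mathcal U(f)\mathbf q=\tfrac12(0,f^\intercal)^\intercal\circ\mathbf q$, i.e.\ \emph{left} multiplication by the pure quaternion of $f$ --- both exactly as you state. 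But then the two cross terms in the derivative of the conjugation combine into $\tfrac12\bigl((0,f^\intercal)^\intercal\circ(0,(\mathbf R\zeta)^\intercal)^\intercal-(0,(\mathbf R\zeta)^\intercal)^\intercal\circ(0,f^\intercal)^\intercal\bigr)=(0,(f\times\mathbf R\zeta)^\intercal)^\intercal$, i.e.\ $\partial_z\mathbf R=\widehat f\,\mathbf R$ (the spatial form), \emph{not} $\mathbf R\widehat f$: since $\widehat f\,\mathbf R=\mathbf R\,\widehat{\mathbf R^\intercal f}$, the two coincide only when $\mathbf R^\intercal f=f$. So your claim that the commutator ``after simplification equals $(0,(\mathbf R\widehat f\,\zeta)^\intercal)^\intercal$'' is not a simplification; it is a side error. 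To land on $\mathbf R\widehat f$ one needs the \emph{right}-multiplication kinematics $\partial_z\mathbf q=\tfrac12\,\mathbf q\circ(0,f^\intercal)^\intercal$ paired with the conjugation $\mathbf q\circ\zeta\circ\mathbf q^*$, in which case the cross terms give $\mathbf q\circ\tfrac12\bigl[(0,f^\intercal)^\intercal,(0,\zeta^\intercal)^\intercal\bigr]\circ\mathbf q^*=(0,(\mathbf R(f\times\zeta))^\intercal)^\intercal$ as desired; this is also what the paper's in-text remark ``$\partial_z\mathbf q=\tfrac12\,\mathbf q\circ\mathbf f$'' signals is intended. Before the computation can close, you must therefore fix a single product convention under which $2\,\mathcal U(f)\mathbf q$ is the product of $\mathbf q$ with $(0,f^\intercal)^\intercal$ on the \emph{right} and, under that same convention, verify which conjugation realizes \eqref{eq:rot_param_quat}; with the conventions as you have written them the two requirements are incompatible, and the argument as proposed proves a different identity. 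The same defect then propagates into your ``if'' direction, since it invokes the ``only if'' part.
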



\begin{remark} \label{rem:constant_quat}
In itself, \eqref{eq:zODE_quat_U} implies that $\partial_z (|\mathbf{q}|^2) \equiv 0$, since it yields $\partial_z (|\mathbf{q}|^2) = 2 \langle \mathbf{q} \,, \mathcal{U}(f) \mathbf{q} \rangle$ and straightforward computations yield that the right-hand side is null.
\end{remark}

By definition of the quaternion product $\circ$ \hspace{0.001cm}, the equation $\partial_z \mathbf{q} = \mathcal{U}(f) \mathbf{q}$ is just is an equivalent way of writing $\partial_z \mathbf{q} = \frac{1}{2} \mathbf{q} \circ \mathbf{f}$, where $\mathbf{f} = (0, f^\intercal)^\intercal$.
The proof of \cref{lem:quat_rot_ODE}, omitted here, rests on extensive but elementary computations involving the relationship \eqref{eq:rot_param_quat}, the definition of the quaternion product and properties of the cross product. The second lemma, given below, yields the existence of a unique solution to an overdetermined system of two first order PDE.

\begin{lemma} \label{lem:quat_overter_syst}
Let $A, B \in C^1([0, \ell]\times [0, +\infty); \mathbb{R}^{n \times n})$ be such that the compatibility condition $A B - B A + (\partial_x A) - (\partial_t B) = 0$ holds in $[0, \ell]\times [0, + \infty)$.
Then,
\begin{align*} 
\begin{cases}
\partial_t y(x, t) = A(x, t) y(x,t) & \text{in }[0, \ell]\times [0, + \infty)\\
\partial_x y(x, t) = B(x, t) y(x,t) & \text{in }[0, \ell]\times [0, + \infty)\\
y(\ell, 0) = y_\mathrm{in}.
\end{cases}
\end{align*}
admits a unique solution $y \in C^1([0, \ell]\times[0, + \infty); \mathbb{R}^{n})$, for any given $y_\mathrm{in} \in \mathbb{R}^n$.
\end{lemma}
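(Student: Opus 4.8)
The plan is to construct the solution by integrating the two equations successively and then to use the compatibility identity to show that the second equation survives the construction. First I would restrict the $x$-equation to the line $t=0$: since $B(\cdot,0)\in C^1([0,\ell];\mathbb{R}^{n\times n})$, the linear ODE $\tfrac{\mathrm{d}}{\mathrm{d}x}\eta=B(\cdot,0)\eta$ with the terminal condition $\eta(\ell)=y_\mathrm{in}$ has a unique solution $\eta\in C^1([0,\ell];\mathbb{R}^n)$, global on $[0,\ell]$ by linearity. Then, for each fixed $x\in[0,\ell]$, I would solve the linear ODE $\partial_t y(x,\cdot)=A(x,\cdot)\,y(x,\cdot)$ with initial value $y(x,0)=\eta(x)$; again linearity precludes blow-up, so this solution exists on all of $[0,+\infty)$, and since the right-hand side $(x,t,y)\mapsto A(x,t)y$ is $C^1$ and $\eta$ is $C^1$, differentiable dependence on parameters and initial data yields $y\in C^1([0,\ell]\times[0,+\infty);\mathbb{R}^n)$. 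By construction $\partial_t y=Ay$ everywhere, $\partial_x y=By$ on $\{t=0\}$, and $y(\ell,0)=y_\mathrm{in}$.

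The core of the argument is to upgrade $\partial_x y=By$ from the initial line to all times. I would introduce the auxiliary function $z:=\partial_x y-By$, which is continuous with $z(\cdot,0)\equiv0$, and differentiate it in $t$. Here one uses that $\partial_t y=Ay$ holds identically, so $\partial_t y\in C^1$ and hence $\partial_x(\partial_t y)=\partial_x(Ay)$ exists and is continuous; by Schwarz's theorem $\partial_t(\partial_x y)$ then exists and equals $\partial_x(\partial_t y)$. Substituting this, together with $\partial_x y=z+By$ and $\partial_t y=Ay$, gives
\begin{align*}
\partial_t z=\partial_x(Ay)-(\partial_t B)y-BAy=Az+\bigl(\partial_x A+AB-BA-\partial_t B\bigr)y=Az,
\end{align*}
where the bracketed term vanishes by the assumed compatibility condition. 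Thus, for each fixed $x$, the map $t\mapsto z(x,t)$ solves the homogeneous linear ODE $\partial_t z=A(x,\cdot)z$ with zero initial value, so $z\equiv0$ by uniqueness. Hence $\partial_x y=By$ on all of $[0,\ell]\times[0,+\infty)$; as $\partial_t y$ and $\partial_x y$ are then both continuous, $y\in C^1$ as required.

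Uniqueness follows by the same bookkeeping: if $w$ is the difference of two solutions, then $w$ satisfies both equations with $w(\ell,0)=0$; restricting the $x$-equation to $t=0$ forces $w(\cdot,0)\equiv0$, after which the $t$-equation forces $w\equiv0$. An entirely symmetric argument, integrating first in $t$ along $x=\ell$ and then in $x$, would prove the lemma equally well. The only genuinely delicate step is the displayed computation, which is exactly where the Frobenius-type hypothesis $AB-BA+\partial_x A-\partial_t B=0$ enters; everything else is standard linear ODE theory — existence, uniqueness, global continuation by linearity, and $C^1$ dependence on parameters and initial conditions — together with the minor observation that the mixed partials commute because one of the two equations holds by construction and thereby supplies the missing continuity.
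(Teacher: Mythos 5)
Your construction is correct and is exactly the paper's argument: solve $\tfrac{\mathrm{d}}{\mathrm{d}x}w=B(\cdot,0)w$ backward from $w(\ell)=y_{\mathrm{in}}$, propagate in time via $\partial_t y=Ay$ with $y(\cdot,0)=w$, and then show $z=\partial_x y-By$ satisfies $\partial_t z=Az$ with $z(\cdot,0)=0$ using the compatibility condition. Your treatment of the mixed-partials step and of uniqueness is sound, so nothing further is needed.
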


The proof consists in considering the solution $y$ to $\partial_t y = A y$ in $[0, \ell]\times [0, + \infty)$ with $y(x, 0) = w(x)$, where $w$ is the solution to $\frac{\mathrm{d}}{\mathrm{d}x} w = B(\cdot, 0) w$ in $[0, \ell]$ with $w(\ell) = y_\mathrm{in}$, and showing that $y$ solves in fact also $\partial_x y = B y$ in $[0, \ell]\times [0, + \infty)$ by using the compatibility condition. We may now prove the second main result.

\begin{proof}[Proof of \cref{thm:fromIGEBtoGEB}]
Let $y \in C^0([0, +\infty); \mathbf{H}^2(0, \ell))$ be the unique solution to \eqref{eq:intrinsic_original} with initial datum $y^0$ fulfilling the assumptions of \cref{thm:fromIGEBtoGEB}. We will also use the notation $y = (\mathbf{y}_1^\intercal, \mathbf{y}_2^\intercal, \mathbf{y}_3^\intercal,\mathbf{y}_4^\intercal)^\intercal$, where $\mathbf{y}_i \colon [0, \ell]\times [0, +\infty)\rightarrow \mathbb{R}^{3}$ for $1 \leq i \leq 4$.

As explained in \cref{rem:extensions} \ref{rem:reg_sol} this solution $y$ belongs to $C^1([0, \ell]\times [0, +\infty); \mathbb{R}^{d})$ and there exists $\alpha, \bar{\eta}>0$ depending only on $\varepsilon$ such that $\|y\|_{C^{1}([0, \ell] \times [0, +\infty); \mathbb{R}^d)} \leq \bar{\eta} e^{- \alpha t} \|y^0\|_{\mathbf{H}^2(0, \ell)}$.
The proof is divided in four steps. 

\subsubsection*{Step 1: Compatibility conditions}
In this step, we point out compatibility conditions useful latter on, which come from the governing equations fulfilled by $y$ and from the relationship between the initial data of \eqref{eq:GEB} and \eqref{eq:intrinsic_original}.
The last six governing equations of \eqref{eq:intrinsic_original}, write equivalently as
\begin{align}
&\partial_t \mathbf{y}_3 - \partial_x \mathbf{y}_1 - (\widehat{\mathbf{y}}_4 + \widehat{\Upsilon}_c)\mathbf{y}_1 + \widehat{\mathbf{y}}_2 (\mathbf{y}_3 + e_1)=0. \label{eq:compat_p}\\
&\partial_x \mathbf{y}_2 - \partial_t \mathbf{y}_4  = \widehat{\mathbf{y}}_2  (\Upsilon_c + \mathbf{y}_4) \label{eq:compat_R}
\end{align}
Using that $\widehat{(\widehat{u}v)} = \widehat{u}\widehat{v} - \widehat{v}\widehat{u}$ for all $u,v \in \mathbb{R}^3$, one can show that \eqref{eq:compat_R} is equivalent to
\begin{align}\label{eq:compat_R_U}
\mathcal{U}(\mathbf{y}_2) \mathcal{U}(\mathbf{y}_4 + \Upsilon_c) - \mathcal{U}(\mathbf{y}_4 + \Upsilon_c)\mathcal{U}(\mathbf{y}_2)  + \partial_x(\mathcal{U}(\mathbf{y}_2)) - \partial_t(\mathcal{U}(\mathbf{y}_4 + \Upsilon_c)) = 0.
\end{align}
From \eqref{eq:y0}, we know that, for all $(x, t) \in [0, \ell]\times[0,+\infty)$,
\begin{align} \label{eq:compat_ini_R}
\tfrac{\mathrm{d}}{\mathrm{d}x} \mathbf{R}^0(x) &= \mathbf{R}^0(x)(\widehat{\mathbf{y}}_4(x, 0) + \widehat{\Upsilon}_c(x)),\qquad \tfrac{\mathrm{d}}{\mathrm{d}t} h^\mathbf{R} = h^\mathbf{R} \widehat{\mathbf{y}}_2(\ell, t),\\
\label{eq:compat_ini_p}
\tfrac{\mathrm{d}}{\mathrm{d}x}\mathbf{p}^0(x)&=\mathbf{R}^0(x)(\mathbf{y}_3(x, 0) + e_1), 
\qquad \quad \ \ \, \mathbf{y}_1(\ell, \cdot) = 0,
\end{align}
where the second equation in \eqref{eq:compat_ini_R} comes from $\mathbf{y}_2(\ell, \cdot)\equiv 0$ and that $h^\mathbf{R}$ is constant.

Recall that $h^\mathbf{p} = \mathbf{p}^0(\ell)$ and $h^\mathbf{R} = \mathbf{R}^0(\ell)$, and that $\mathcal{N}(\mathbf{p}, \mathbf{R}) = y$ also writes as 
\begin{align} \label{eq:transfor_p_R}
\partial_t \mathbf{R} = \mathbf{R} \widehat{\mathbf{y}}_2, \quad \partial_x \mathbf{R} = \mathbf{R} (\widehat{\mathbf{y}}_4 + \widehat{\Upsilon}_c),\quad \partial_t \mathbf{p} = \mathbf{R}\mathbf{y}_1, \quad \partial_x \mathbf{p} = \mathbf{R}(\mathbf{y}_3 + e_1).
\end{align}
Our aim in the next two steps is to show that the transformation $\mathcal{N}$ is bijective from the space $E_1 = \left\{(\mathbf{p}, \mathbf{R}) \in C^2([0, \ell] \times [0, +\infty); \mathbb{R}^3 \times \mathrm{SO}(3)) \colon \eqref{eq:ini_bound_R}, \eqref{eq:ini_bound_p} \right\}$ onto the space $E_2 = \left\{y \in C^1([0, \ell] \times [0, +\infty); \mathbb{R}^d) \colon \eqref{eq:compat_p}, \eqref{eq:compat_R}, \eqref{eq:compat_ini_R}, \eqref{eq:compat_ini_p} \right\}$, where
\begin{align} \label{eq:ini_bound_R}
\mathbf{R}(x, 0) = \mathbf{R}^0(x), \quad \mathbf{R}(\ell, t) = h^\mathbf{R}, \qquad \text{for all }(x, t) \in [0, \ell]\times[0, T],\\
\label{eq:ini_bound_p}
\mathbf{p}(x, 0) = \mathbf{p}^0(x), \quad \  \mathbf{p}(\ell, t) = h^\mathbf{p}, \qquad \ \text{for all }(x, t) \in [0, \ell]\times[0, T].
\end{align}

\subsubsection*{Step 2: Rotation matrix} 
In this step, we show that there exists a unique $\mathbf{R} \in C^1([0, \ell] \times [0, +\infty); \mathrm{SO}(3))$ fulfilling \eqref{eq:ini_bound_R} and the first two equations in \eqref{eq:transfor_p_R}. Let us denote $\mathbf{R}_\mathrm{in} = \mathbf{R}^0(\ell) = h^\mathbf{R}$, and let $\mathbf{q}_\mathrm{in} \in \mathbb{R}^4$ be a quaternion which parametrizes $\mathbf{R}_\mathrm{in}$. Since \eqref{eq:compat_ini_R} holds, imposing the condition $\mathbf{R}(\ell, 0) = \mathbf{R}_\mathrm{in}$ is equivalent to imposing \eqref{eq:ini_bound_R}.
Hence, we will look for the solution $\mathbf{R}$ to
\begin{align} \label{eq:R_overdet_in}
\begin{cases}
\partial_t \mathbf{R} = \mathbf{R} \widehat{\mathbf{y}}_2 &\text{in }[0, \ell] \times [0,+\infty)\\
\partial_x \mathbf{R} = \mathbf{R} (\widehat{\mathbf{y}}_4 + \widehat{\Upsilon}_c)&\text{in }[0, \ell] \times [0,+\infty)\\
\mathbf{R}(\ell, 0) = \mathbf{R}_\mathrm{in}.
\end{cases}
\end{align}
If $\mathbf{R} \in C^1([0, \ell]\times[0, +\infty); \mathrm{SO}(3))$ is parametrized by $\mathbf{q} \in C^1([0, \ell]\times[0, +\infty); \mathbb{R}^4)$, then
$\mathbf{R}$ is solution to \eqref{eq:R_overdet_in} if and only if either $\mathbf{q}$ or $-\mathbf{q}$ is solution to 
\begin{align} \label{eq:overdeter_quat_U}
\begin{cases}
\partial_t \mathbf{q} = \mathcal{U}(\mathbf{y}_2) \mathbf{q} &\text{in }[0, \ell] \times [0,+\infty)\\
\partial_x \mathbf{q} = \mathcal{U} (\mathbf{y}_4 + \Upsilon_c) \mathbf{q} &\text{in }[0, \ell] \times [0, +\infty)\\
\mathbf{q}(\ell, 0) = \mathbf{q}_{\mathrm{in}}.
\end{cases}
\end{align}
Indeed, $\mathbf{R}(\ell, 0) = \mathbf{R}_\mathrm{in}$ is equivalent to either $\mathbf{q}(\ell, 0) = \mathbf{q}_{\mathrm{in}}$ or $\mathbf{q}(\ell, 0) = - \mathbf{q}_{\mathrm{in}}$ and, by \cref{lem:quat_rot_ODE}, the governing equations of  \eqref{eq:R_overdet_in} are equivalent to those of \eqref{eq:overdeter_quat_U}.

\cref{lem:quat_overter_syst} and \eqref{eq:compat_R_U} yield the existence of a unique solution $\mathbf{q} \in C^1([0, \ell]\times[0, +\infty); \mathbb{R}^4)$ to \eqref{eq:overdeter_quat_U}.
Moreover, $|\mathbf{q}|\equiv 1$, since $\partial_t(|\mathbf{q}|^2) \equiv 0$ and $\partial_x (|\mathbf{q}|^2) \equiv 0$ by \cref{rem:constant_quat}, and $|\mathbf{q}_{\mathrm{in}}|=1$.
Hence, the map $\mathbf{R} \in C^1([0, \ell]\times[0, +\infty); \mathrm{SO}(3))$ parametrized by this solution $\mathbf{q}$ is the unique solution to \eqref{eq:R_overdet_in}. Indeed, assume that there are two solutions $\mathbf{R}_1,\mathbf{R}_2$ to \eqref{eq:R_overdet_in}, and $\mathbf{q}_1, \mathbf{q}_2$ are respective corresponding quaternions. Then, up to a minus sign $\mathbf{q}_1,\mathbf{q}_2$ are solutions to \eqref{eq:overdeter_quat_U} and are consequently identically equal up to a minus sign, implying that $\mathbf{R}_1 \equiv \mathbf{R}_2$.

\subsubsection*{Step 3: Position of centerline}
Let $\mathbf{R}$ be the unique solution to \eqref{eq:R_overdet_in} given by the previous step. Our aim in this step is to show the existence of a unique $\mathbf{p} \in C^1([0, \ell]\times [0, + \infty); \mathbb{R}^3)$ fulfilling \eqref{eq:ini_bound_p} and the last two equations in \eqref{eq:transfor_p_R}, i.e.
\begin{align} \label{eq:syst_p}
\begin{cases}
\partial_t \mathbf{p} = \mathbf{R} \mathbf{y}_1 &\text{in }[0, \ell] \times [0,  +\infty)\\
\partial_x \mathbf{p} = \mathbf{R} (\mathbf{y}_3 + e_1) &\text{in }[0, \ell] \times [0,  +\infty)\\
\mathbf{p}(x, 0) = \mathbf{p}^0(x) &\text{for }x \in [0, \ell] \\
\mathbf{p}(\ell, t) = h^\mathbf{p} &\text{for }t \in [0, +\infty).
\end{cases}
\end{align}
We write the governing equations in integral form. Then, $\mathbf{p}_1$ fulfills $\partial_t \mathbf{p}_1 = \mathbf{R} \mathbf{y}_1$ and $\mathbf{p}_1(\cdot, 0) = \mathbf{p}^0$ if and only if $\mathbf{p}_1(x, t) = \mathbf{p}^0(x) + \int_0^t (\mathbf{R} \mathbf{y}_1)(x, s)ds$, which also writes as
\begin{align*}
\mathbf{p}_1(x, t) &= \mathbf{p}^0(\ell) - \int_x^\ell \frac{\mathrm{d}\mathbf{p}^0}{\mathrm{d}x} (\xi)d\xi + \int_0^t (\mathbf{R} \mathbf{y}_1)(\ell, \tau)d\tau - \int_0^t \int_x^\ell \partial_x(\mathbf{R} \mathbf{y}_1)(\xi, \tau)d\xi d\tau.
\end{align*}
Similarly, for the second system, $\mathbf{p}_2$ fulfills $\partial_x \mathbf{p}_2 = \mathbf{R} (\mathbf{y}_3 + e_1)$ and $\mathbf{p}_2(\ell,  \cdot) = h^\mathbf{p}$ if and only if $\mathbf{p}_2(x, t) =  h^\mathbf{p} - \int_x^\ell (\mathbf{R} (\mathbf{y}_3 + e_1)) (\xi, t) d \xi$, which also writes as
\begin{align*}
\mathbf{p}_2(x, t) &= h^\mathbf{p} - \int_x^\ell (\mathbf{R} (\mathbf{y}_3 + e_1)) (\xi, 0) d \xi 
- \int_x^\ell \int_0^t \partial_t(\mathbf{R} (\mathbf{y}_3 + e_1))(\xi, \tau)d\tau d\xi. 
\end{align*}
Since $\mathbf{R}$ solves \eqref{eq:overdeter_quat_U} and has values in $\mathrm{SO}(3)$, the compatibility condition \eqref{eq:compat_p} is equivalent to $\partial_x(\mathbf{R} \mathbf{y}_1) = \partial_t(\mathbf{R} (\mathbf{y}_3 + e_1))$. This observation, in addition to \eqref{eq:compat_ini_p}, imply that $\mathbf{p}_1 \equiv \mathbf{p}_2$ is the unique solution to \eqref{eq:syst_p}.

The functions $\mathbf{p}, \mathbf{R}$ provided by the two previous steps are in fact of regularity $C^2$ in $[0, \ell]\times[0, +\infty)$.  Indeed, $\partial_t \mathbf{R}, \partial_x \mathbf{R} \in C^1([0, \ell]\times[0, +\infty); \mathbb{R}^{3\times 3})$ and $\partial_t \mathbf{p}, \partial_x \mathbf{p} \in C^1([0, \ell]\times[0, +\infty); \mathbb{R}^{3})$, since $\mathbf{p},\mathbf{R}, \Upsilon_c, \{\mathbf{y}_i\}_{i=1}^4$ are $C^1$ with respect to their arguments.

\subsubsection*{Step 4: Solution to \eqref{eq:GEB}}
We have found $(\mathbf{p}, \mathbf{R}) \in  E_1$ such that $\mathcal{N}(\mathbf{p}, \mathbf{R}) = y$. In this step we show that it is solution to \eqref{eq:GEB}, and we that the solution in $C^2([0, \ell] \times [0, +\infty); \mathbb{R}^3 \times \mathrm{SO}(3))$ to \eqref{eq:GEB} is unique.
We now use that $y$ fulfills the initial and boundary conditions, and first six governing equations of \eqref{eq:intrinsic_original}.
Indeed, in these six governing equations, we replace $\mathbf{y}_i$ for $1\leq i \leq 4$ by their expressions in terms of $\mathbf{p}, \mathbf{R}$ and $V, W, \Gamma, \Upsilon$ (defined in \eqref{eq:def_VWGU}). After some computations, using properties of the vector product, 
we obtain the governing equations of \eqref{eq:GEB}.
The boundary conditions at $x=\ell$ are recovered by using those of \eqref{eq:intrinsic_original} together with \eqref{eq:def_VWGU} and $\mathcal{N}(\mathbf{p}, \mathbf{R}) = y$. The remaining initial conditions
are retrieved from \eqref{eq:y0} together with \eqref{eq:def_VWGU} and $\mathcal{N}(\mathbf{p}, \mathbf{R}) = y$.
The uniqueness of the solution to \eqref{eq:GEB} results from the uniqueness of the solution to the IGEB model \eqref{eq:intrinsic_original} and from the fact that $\mathcal{N}$ is bijective from $E_1$ onto $E_2$.

The last assertion of the theorem follows from the exponential decay
of $y$ and the fact that $\mathbf{R}$ has values in the set of rotation (hence, unitary) matrices.
\end{proof}

\section{Conclusion and perspectives}
We have studied a freely vibrating beam described by the GEB and IGEB models. Showing first exponential stability for the latter, we deduced existence, uniqueness and some stability properties for the former model with the same boundary feedback control. Let us make some remarks on the Lyapunov functional used in the proof and on the exponential decay, before commenting on possible extensions.

\paragraph{The Lyapunov functional}
To express the energy of the beam and Lyapunov functionals, we may adopt the point of view of either the \emph{physical} system \eqref{eq:intrinsic_original} or the \emph{diagonal} system \eqref{eq:intrinsic_charact}. In \eqref{eq:energy_physical} and \eqref{eq:def_L0}, we have seen that the energy for the physical and diagonal systems is characterized by the constant matrices $Q^\mathcal{P}$ and $Q^\mathcal{D}$, respectively. The Lyapunov functional $\mathcal{L}$ for the diagonal system, which is given in \eqref{eq:form_Lyap}, may also be written in terms of the physical variable, as
\begin{align*}
\bar{\mathcal{L}}(t) = \sum_{j=0}^k \int_0^\ell \left\langle \partial_t^j y(x,t) \,, \bar{Q}(x) \partial_t^j y(x,t) \right\rangle dx
\end{align*}
where $y$ is the unknown state of \eqref{eq:intrinsic_original}. It is interesting to note that the matrix
\begin{align*} 
Q(x) = \mathrm{diag} \Big(\varphi(x) \mathds{I}_6, \, (2\varphi(\ell)-\varphi(x) ) \mathds{I}_6  \Big) Q^\mathcal{D},
\end{align*}
for $\varphi$ as in \cref{lem:existence_varphi}, which was found in the proof of the main result \cref{thm:stabilization}, takes the form $\bar{Q}(x) = L^{\intercal} Q(x) L$, given below, for the physical system
\begin{align*}
\bar{Q}(x) = \varphi(\ell) Q^{P} +  (\varphi(x) - \varphi(\ell)) \begin{bmatrix}
\mathds{O}_6 &  \mathbf{M}D \\
 \mathbf{M}D  & \mathds{O}_6
\end{bmatrix}.
\end{align*}
We see that the "energy matrix" $Q^\mathcal{P}$ is multiplied by a positive constant, and extradiagonal components dependent on $x$ are added. At the boundary $x=\ell$, where the beam is clamped, $Q$ (resp. $\bar{Q}$) is equal to the "energy matrix" $Q^\mathcal{D}$ (resp. $Q^\mathcal{P}$), while it differs at the end $x=0$ at which the feedback control is applied.

\paragraph{Exponential decay}
Following the proof of \cite[Th. 10.2]{bastin2017exponential} for the special case of System \eqref{eq:intrinsic_charact} while making the constants explicit, one can observe that in the case of $H^1$ stabilization (the $H^2$ case being similar), the exponential decay has the form 
\begin{align*}
\alpha = \tfrac{1}{2} C_Q\big(-C_\mathcal{S} - 4 C_Q C_g \delta \big).
\end{align*}
Above, $\delta > 0$ constrains the size of the initial datum in the $C^0([0, \ell]; \mathbb{R}^d)$ norm (or $C^1$ norm in the $H^2$ case). The constants $C_g, C_Q>0$ depend on $g$ (hence, on the beam parameters) and $Q$, and $C_\mathcal{S} < 0$ is the maximum over $[0, \ell ]$ of the largest eigenvalue of $\mathcal{S} = -\tfrac{\mathrm{d}\varphi}{\mathrm{d}x}\Lambda +2(\varphi (\ell )- \varphi )\Theta$, the matrices $\Lambda, \Theta$ being defined in \eqref{eq:def_Lam}-\eqref{eq:def_The}.
It would be valuable to see how the choice of $\mu_1, \mu_2>0$ and the function $\varphi$ (from \cref{lem:existence_varphi}) affect the decay.
We have seen that the feedback parameters influence the choice of $\varphi$ and that the least restricting choice of $\mu_1, \mu_2$ is \eqref{eq:feedbackPara_leastConstrain}. One may also be interested in the impact of the beam parameters, starting with $\ell>0$, on the decay.

\paragraph{Networks}
Beams may also be studied as part of networks to describe flexible structures: see the modelling done in \cite{LLS93modelling} and the simulations of networks of Cosserat elastic rods carried out in \cite{spillmann09}. Different control problems for networks of linear and nonlinear Timoshenko beams have been treated for instance in \cite{kufner18, LLS93control, leugering99}. Our next interest related to this work is the exponential stabilization for a network of IGEB by applying feedback controls at the nodes.

\paragraph{More general beams}
One could consider a more general IGEB model.
If the size or material of the cross sections varies along the centerline then the parameters of the beam, and consequently the model's coefficients, depend on $x$. 
One may also be interested in the IGEB model accounting for any thin beam made of linear-elastic material, as in \cite{artola2019, hodges2003geometrically, palacios2017invariant}.
An advantage of the method presented here, is that it is possible to generalize the result to the above cases as long as the system remains hyperbolic, though, in the latter case of a general IGEB model, one may have to change $Q$ and be mindful of the regularity of the eigenvalues and eigenvectors of $A$.
As mentioned in \cref{rem:extensions} \ref{rem:freely_vibrating}, another perspective is to assume that external forces, such as gravity \cite[eq. (4)]{artola2019} or aerodynamic forces \cite[eq. (12)]{palacios11_intrinsic}, which can be functions of $x$ or $(\mathbf{p}, \mathbf{R})$, are applied on the beam.

\subsection*{Acknowledgments} We thank M. Gugat, B. Gesh\-kovski, A. Wynn, and the Load Control and Aeroelastics Lab team of Imperial College London, for helpful discussions and advice, and the anonymous reviewers who helped improving this work.

\bibliographystyle{plain}
\bibliography{biblio}
\end{document}